\newcommand{\ba}{\begin{align}}
\newcommand{\ea}{\end{align}}
\newcommand{\be}{\begin{equation}}
\newcommand{\ee}{\end{equation}}
\newcommand{\beq}{\begin{eqnarray}}
\newcommand{\eeq}{\end{eqnarray}}
\newcommand{\beqs}{\begin{eqnarray*}}
\newcommand{\eeqs}{\end{eqnarray*}}
\newcommand\Supp{\mathcal{S}}
\newcommand\PA{\mathscr{P}}
\newcommand\SA{\mathscr{S}}
\newcommand\FS{\mathfrak{F}_S}
\newcommand\C{\mathscr{C}}
\newcommand\Z{\mathbb{Z}}
\newcommand\Max{|Max\rangle}
\newcommand\GHZ{|GHZ\rangle}
\begin{document}

\tikzset{->-/.style={decoration={
  markings,
  mark=at position #1 with {\arrow{>}}},postaction={decorate}}}

\theoremstyle{plain}
\newtheorem{theorem}{Theorem~}[section]
\newtheorem{main}{Main Theorem~}
\newtheorem{lemma}[theorem]{Lemma~}
\newtheorem{proposition}[theorem]{Proposition~}
\newtheorem{corollary}[theorem]{Corollary~}
\newtheorem{definition}[theorem]{Definition~}
\newtheorem{notation}[theorem]{Notation~}
\newtheorem{example}[theorem]{Example~}
\newtheorem*{remark}{Remark~}
\newtheorem*{question}{Question}
\newtheorem*{claim}{Claim}
\newtheorem*{ac}{Acknowledgement}
\newtheorem*{conjecture}{Conjecture~}
\renewcommand{\proofname}{\bf Proof}



\newcommand{\rotateRPY}[3]
{   \pgfmathsetmacro{\rollangle}{#1}
    \pgfmathsetmacro{\pitchangle}{#2}
    \pgfmathsetmacro{\yawangle}{#3}

    \pgfmathsetmacro{\newxx}{cos(\yawangle)*cos(\pitchangle)}
    \pgfmathsetmacro{\newxy}{sin(\yawangle)*cos(\pitchangle)}
    \pgfmathsetmacro{\newxz}{-sin(\pitchangle)}
    \path (\newxx,\newxy,\newxz);
    \pgfgetlastxy{\nxx}{\nxy};

    \pgfmathsetmacro{\newyx}{cos(\yawangle)*sin(\pitchangle)*sin(\rollangle)-sin(\yawangle)*cos(\rollangle)}
    \pgfmathsetmacro{\newyy}{sin(\yawangle)*sin(\pitchangle)*sin(\rollangle)+ cos(\yawangle)*cos(\rollangle)}
    \pgfmathsetmacro{\newyz}{cos(\pitchangle)*sin(\rollangle)}
    \path (\newyx,\newyy,\newyz);
    \pgfgetlastxy{\nyx}{\nyy};

    \pgfmathsetmacro{\newzx}{cos(\yawangle)*sin(\pitchangle)*cos(\rollangle)+ sin(\yawangle)*sin(\rollangle)}
    \pgfmathsetmacro{\newzy}{sin(\yawangle)*sin(\pitchangle)*cos(\rollangle)-cos(\yawangle)*sin(\rollangle)}
    \pgfmathsetmacro{\newzz}{cos(\pitchangle)*cos(\rollangle)}
    \path (\newzx,\newzy,\newzz);
    \pgfgetlastxy{\nzx}{\nzy};
}

\newcommand{\myGT}[2]
{
    \pgftransformcm{2}{0}{0}{3}{\pgfpoint{#1cm}{#2cm}}
}



\tikzstyle WL=[line width=3pt,opacity=1.0]

\newcommand{\drawWL}[3]
{
    \draw[white,WL]  (#2) -- (#3);
    \draw[#1] (#2) -- (#3);
}



\newpage
\title{Quon language: surface algebras and Fourier duality
}
\author{Zhengwei Liu}
\date{\today}

\begin{abstract}
Quon language is a 3D picture language that we can apply to simulate mathematical concepts.
We introduce the surface algebras as an extension of the notion of planar algebras to higher genus surface.
We prove that there is a unique one-parameter extension.
The 2D defects on the surfaces are quons, and surface tangles are transformations.
We use quon language to simulate graphic states that appear in quantum information, and to simulate interesting quantities in modular tensor categories.
This simulation relates the pictorial Fourier duality of surface tangles and the algebraic Fourier duality induced by the S matrix of the modular tensor category. The pictorial Fourier duality also coincides with the graphic duality on the sphere.
For each pair of dual graphs, we obtain an algebraic identity related to the $S$ matrix.
These identities include well-known ones, such as the Verlinde formula; partially known ones, such as the 6j-symbol self-duality; and completely new ones.

\end{abstract}

\maketitle

\section{Introduction}


Quon language is a 3D picture language that we can apply to simulate mathematical concepts \cite{JLW-Quon}.
It was designed to answer a question in quantum information, where the underling symmetry is the group $\Z_2$ for qubits and $\Z_d$ for qudits.
\textcolor{black}{One can consider the quon language as a topological quantum field theory (TQFT) in the 3D space with lower dimensional defects, and a quon as a 2D defect on the boundary of the 3D TQFT.}
The underlying symmetry of the 3D picture language can be generalized to more general quantum symmetries captured by subfactor theory \cite{JonSun97, EvaKaw98}.


Jones introduced subfactor planar algebras as a topological axiomatization of the standard invariants of subfactors \cite{JonPA}.
One can consider a planar algebra as a 2D topological quantum field theory (TQFT) on the plane with line defects. A subfactor planar algebra is always spherical, so the theory also extends to a sphere.

A vector in the planar algebra of a subfactor is a morphism in the bi-module category associated with the subfactor.
From this point of view, a morphism is usually represented as a disc with $m$ boundary points on the top and $n$ boundary points at the bottom, and considered as a transformation with $m$ inputs and $n$ outputs,

In the 3D quon language, we consider these morphisms in planar algebras as quons, and consider planar tangles as transformations. This interpretation is similar to the original definition of Jones, which turns out to be more compatible with the notions in quantum information.
 A planar tangle has multiple input discs and one output disc. So it represents a transformation from multiple quons to one quon.

In quantum information, we usually consider multiple qubits and their transformations.
To simulate multiple quon transformations, we generalize planar tangles to spherical tangles with multiple input discs and output discs.  When we compose such tangles, we will obtain higher genus-surfaces. So we further extend the notion of planar algebras to higher genus surfaces, that we call surface algebras. The theory of planar algebras becomes the local theory of surface algebras.

There is a freedom to define the partition function of a sphere in this extension, denoted by $\zeta$. We show that the partition function of the genus-$g$ surface is $\zeta^{1-g}$, which detects the topological non-triviality. We prove that any non-degenerate spherical planar algebra has a unique extension of to a surface algebra for any non-zero $\zeta$. Therefore a subfactor not only defines a spherical planar algebra, but also a surface algebra parameterized by $\zeta$. The fruitful theory of subfactors provides many interesting examples.

In this paper, we take the subfactor to be the quantum double of a unitary modular tensor category $\C$, also known as the Drinfeld double \cite{Dri86}.
Then the 2-box space of the planar algebra of the subfactor is isomorphic to $L^2(Irr)$, where $Irr$ denotes the set of irreducible objects of $\C$.
Xu and the author proved that the associated subfactor planar algebra is unshaded \cite{LiuXu}.
Thus the 2-box space becomes the 4-box space of the unshaded planar algebra, denoted by $\SA_4$.
The unshaded condition is crucial to define the string Fourier transform (SFT) on one space.
Moreover, we proved that the SFT on $\SA_4$ is identical to the modular $S$ transformation of the MTC $\C$.
Both transformations have been considered as a generalization of the Fourier transform from different point of views. This identification relates the two different Fourier dualities for MTC and subfactors.
We restrict the 1-quon space as $\SA_4$, in order to study this pair of Fourier dualities. We list the correspondence for 1-quons in Fig.~\ref{Table:Fourier duality on 1-quons}; see \S \ref{Sec:Fourier duality on 1-quons} for details.

\begin{table}[h]
\caption{Fourier duality on MTC and 1-Quons}\label{Table:Fourier duality on 1-quons}
\begin{tabular}{|c|c|}
 \hline
   MTC & Quon \\ \hline
simple objects (Irr) & ortho-normal-basis \\ \hline
multiplication & multiplication  \\ \hline
fusion & convolution  \\ \hline
$S$ matrix & SFT $\FS$ \\ \hline
full subcategories $\C_K$ & biprojections $P_K$ \\ \hline
M\"{u}ger's center $\C_{\hat{K}}$ & $P_{\hat{K}}$ \\ \hline
${\hat{\hat{K}}}=K$ & $\FS^2(P_K)=P_K$ \\ \hline
$\dim \C_{\hat{K}}$ & $\Supp(P_K)$ \\ \hline
$\dim \C_K \dim \C_{\hat{K}} = \dim C $ & $\Supp(P_K)\Supp(P_{\hat{K}})=\delta^2$ \\ \hline
\end{tabular}
\end{table}

Verlinde proposed that the $S$ matrix diagonalizes the fusion in the framework of CFT, known as the Verlinde formula \cite{Ver88}.
The Fourier duality on 1-quons gives a conceptual proof addressing Verlinde's original observation given by the Lines 3-5 in Fig.~\ref{Table:Fourier duality on 1-quons}.

Jiang, Wu, and the author studied the Fourier analysis of the SFT on subfactors in \cite{JLW16}. Through this identification, we obtain many inequalities for the $S$ matrix, which will be discussed in a coming paper.
It is particularly interesting that the $\infty$-$1$ Hausdorff-Young inequality for SFT gives an important inequality for the $S$ matrix in unitary MTC proved by Terry Gannon \cite{Gan05}:
\begin{align}
\|\FS(\beta_Y)\|_1 &\leq \delta^{-1} \|\beta_Y\|_{\infty} \\
\Rightarrow \left|\frac{S_X^Y}{S_{X}^0}\right| &\leq \frac{S_{0}^Y}{S_{0}^0}.
\end{align}
The equality condition has been used by M\"{u}ger to define the center of full subcategories in $\C$, known as M\"{u}ger's center \cite{Mug03}.
On the other hands, Bisch and Jones introduced biprojections for subfactors and planar algebras by studying intermediate subfactors \cite{Bis94,BisJon97}.
The Lines 6-10 in Fig.~\ref{Table:Fourier duality on 1-quons} identifies full subcategories in $\C$ with biprojections in $\SA_{4}$, and the several corresponding results between M\"{u}ger's center and biprojections.

We extend the correspondence between $\FS$ and $S$ from 1-quons to $n$-quons using surface algebras, see Theorem \ref{Thm:Fourier duality}:

\begin{center}
 \begin{tikzpicture}
  \begin{scope}[node distance=4cm, auto, xscale=1,yscale=1]
  \foreach \x in {0,1,2,3} {
  \foreach \y in {0,1,2,3} {
  \coordinate (A\x\y) at ({2*\x},{.7*\y});
  }}
  \foreach \y in {0,3}{
  \node at (A0\y) {surface tangles};
  \node at (A3\y) {graphic quons};
  \draw[->] (A1\y)  to node {$Z$} (A2\y);
  }
  \draw[->] (A02) to node [swap] {$\vec{\FS}$} (A01);
  \draw[->] (A32) to node [swap] {$\vec{S}$} (A31);
  \end{scope}
  \end{tikzpicture}.
\end{center}

The left side is pictorial and $\FS$ could be considered as a global $90^{\circ}$ rotation. The right side is algebraic and the $S$-matrix is generalization of the discrete Fourier transform. The partition function $Z$ is a functor relating the pictorial Fourier duality and the algebraic Fourier duality.
%
%

In particular, the algebraic Fourier duality between the two qudit resource state $\GHZ$ and $\Max$ in quantum information turns out to be a pictorial Fourier duality in quon language \cite{JLW-Quon}, see \S \ref{Sec: GHZ Max} for details.
\be
Max_{n,g}= \vec{\FS}(GHZ_{n,g})
\quad \Rightarrow \quad
\Max_{n,g}= \vec{S}\GHZ_{n,g}
\ee
Now this result also apply to unitary MTCs.
Comparing the coefficients, we obtain the generalized Verlinde formula:
\be \label{Equ:1}
\Max_{n,g}= \vec{S}\GHZ_{n,g}
\quad \Rightarrow \quad
\dim(\vec{X},g)= \sum_{X\in Irr} (\prod_{i=1}^n S_{X_i}^{X}) (S_{X}^1)^{2-n-2g}
\ee
The generalized Verlinde formula was first proved by Moore and Seiberg in CFT \cite{MooSei89}. Here we prove it for any unitary MTC and any genus $g$.
We refer the readers to an interesting discussion about various versions of Verlinde formula on MathOverflow: \url{https://mathoverflow.net/questions/151221/verlindes-formula}.

Moreover, for each oriented graph $\Gamma$ on the sphere, we define a surface tangles as a fat graph of $\Gamma$. Then its SFT becomes a fat graph of $\hat{\Gamma}$, where $\hat{\Gamma}$ is the dual graph of $\Gamma$ forgetting the orientation.
So the pictorial Fourier duality also coincide with the graphical duality.
Therefore we obtain one algebraic identity as algebraic Fourier duality of quons from any graph $\Gamma$.
We give some examples including well known ones, such as the Verlinde formula mentioned above; partially known ones; and completely new ones.

If the graph $\Gamma$ is the tetrahedron, then the graphic self-duality of the tetrahedron gives an algebraic $6j$-symbol self-duality for unitary MTCs,  see \S \ref{Sec:Fourier duality} for details:
\be
\left|{{{X_{6}~X_{5}~X_{4}}\choose{\overline{X_{3}}~\overline{X_{2}}~\overline{X_{1}}}}}\right|^{2}
= \sum_{\vec {Y}\in Irr^6} \left(\prod_{k=1}^{6}S_{X_{k}}^{Y_{k}} \right)
\left|{{{Y_{1}Y_{2}Y_{3}}\choose{Y_{4}Y_{5}Y_{6}}}}\right|^{2}.
\ee
In the special case of quantum $SU(2)$, the identity for the 6j-symbol self-duality was discovered by Barrett in the study of quantum gravity \cite{Bar03}, based on an interesting identity of J. Robert \cite{Rob95}. Then the identity was generalized to some other cases related to $SU(2)$ in \cite{FNR07}. A general case for MTCs has been conjectured by Shamil Shakirov, which we answer positively here.

We obtain a sequence of new algebraic self-dual identities from a sequence of self-dual graphs, see \S \ref{Sec:Fourier duality} for details:
\be
\left|{{{X_{2n}~X_{2n-1}\cdots X_{n}}\choose{\overline{X_{n}} \phantom{aa} \overline{X_{n-1}} ~\cdots \overline{X_{1}}}}}\right|^{2}
= \sum_{\vec {Y}\in Irr^{2n}} \left(\prod_{k=1}^{2n}S_{X_{k}}^{Y_{k}} \right)
\left|{{{Y_{1} \phantom{aa} Y_{2} \phantom{aa} \cdots ~Y_{n}}\choose{Y_{n+1}Y_{n+2}\cdots Y_{2n}}}}\right|^{2}.
\ee

\begin{ac}
The author would like to thank Terry Gannon, Arthur Jaffe, Vaughan F. R. Jones, Shamil Shakirov, Cumrun Vafa, Erik Verlinde, Jinsong Wu and Feng Xu for helpful discussions. The author was supported by a grant from Templeton Religion Trust and an AMS-Simons Travel Grant.
The author would like to thank the Isaac Newton Institute for Mathematical Sciences, Cambridge, for support and hospitality during the programme ``Operator algebras: subfactors and their applications''.
\end{ac}

\section{Surface algebras}
In this section, we are going to extend spherical planar algebras from the sphere to higher genus surfaces, which are the boundary of 3-manifolds in the 3D space. The theory of spherical planar algebras become the 0-genus case.

To simply the notation, we only define the single color case and the ground field is $\mathbb{C}$. One can generalize these definitions to multi-color cases over a general field.

\subsection{Surface tangles}

If we consider a planar tangle as a spherical tangle by one point compactification of the plane, then the complement of the planar tangle becomes a disc on the sphere. The induced orientation of the boundary of the output disc will be changed.
Thus we use anti-clockwise and clockwise orientations of boundary of discs to indicate input and output respectively.

The composition of planar tangles is still a planar tangle. In this case, the number of output disc is always one.
If we allow spherical tangles to have multiple input discs and output discs, then we will obtain tangles on higher genus surfaces when we compose these spherical tangles.
We give a generalization of planar tangles to surface tangles, see Fig.~\ref{Fig:genus-2 tangle} for example.

\begin{definition}
A genus-$g$ tangle, for $g\in \mathbb{N}$, is a 3-manifold in the 3D space whose boundary is a genus-$g$ surface. The surface consists of a finite (possibly empty) set of smooth closed discs $\mathcal{D}(T)$. For each disc $D\in \mathcal{D}(T)$, its boundary $\partial D$ of  is an oriented circle with a number of marked points. There is also a finite set of disjoint smoothly embedded curves called strings, which are either closed curves, or the end points of the strings are different marked points of discs. Each marked point is an end-point of some string, which meets the boundary of the corresponding disc transversally.

The connected components of the complement of the strings and discs are called regions. The connected component of the boundary of a disc, minus its marked points, will be called the intervals of that disc. To each disc there is a distinguished interval on its boundary. The distinguished interval is marked by an arrow $\to$, which also indicates the orientation.

A surface tangle is a disjoint union of finitely many higher-genus tangles.

\end{definition}

\begin{figure}\label{Fig:genus-2 tangle}
\begin{center}
\begin{tikzpicture}
\draw[blue] (-2,-1)--++(4,0) arc (-90:90:1) --++(-4,0) arc (90:270:1);
\draw[blue] (-1-.2,0) to [bend left=30] (-1+.2,0);
\draw[blue] (-1-.3,0) to [bend left=-30] (-1+.3,0);
\draw[blue] (1-.2,0) to [bend left=30] (1+.2,0);
\draw[blue] (1-.3,0) to [bend left=-30] (1+.3,0);

\draw (-2,0) to [bend left=30] (0,0);
\draw (-2,0) to [bend left=-30] (0,0);
\draw (2,0) to [bend left=30] (0,0);
\draw (2,0) to [bend left=-30] (0,0);

\draw (-2.5,0) to [bend left=30] (2.5,0);
\draw (-2.5,0) to [bend left=-30] (2.5,0);

\foreach \x in {-2,0,2} {
\fill[white] (\x,0) circle (.5);
\fill[blue!20] (\x,0) circle (.5);
}
\draw[blue,->] (-1.5,0) arc (0:360:.5);
\draw[blue,<-] (-.5,0) arc (-180:180:.5);
\draw[blue,->] (2.5,0) arc (0:360:.5);
\end{tikzpicture}
\end{center}\caption{Example: a genus-2 tangle with two input discs and one output disc.}
\end{figure}
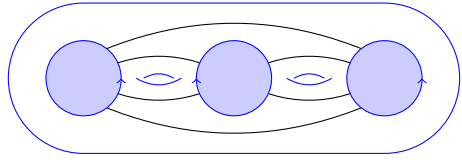

We consider the 3D topological isotopy by orientation-preserving diffeomorphisms in the 3D space.
The {\it surface operad} is the set of isotopic classes of surface tangles.

\begin{remark}
One can impose additional data to color the regions and the strings. In subfactor theory, an alternating shading of the regions is preferred. Therefore the number of boundary points of each disc is even. In tensor categories, the strings are colored by simple objects.  In 2-categories, one has multiple colors for regions and strings. In these cases, the boundary condition $\partial D$ will be colored too.
\end{remark}

\begin{notation}
Let $\partial \mathcal{D}$ be the set of boundary conditions of discs, i.e., the equivalent classes of $\partial D$ modulo isotopy.
We say a disc $D$ is an input (respectively, output) disc, if the orientation of $\partial D$ is anti-clockwise (respectively, clockwise). Let $\mathcal{D}_{I}$ and $\mathcal{D}_{O}$ be the sets of input discs and output discs respectively.
\end{notation}

\begin{notation}
Let $r$ be a reflection by a plane in the 3D space.
\end{notation}
The reflection $r$ is unique up to topological isotopy in the 3D space. Moreover $r$ maps a surface tangle to a surface tangle and reverses the orientation of the boundary of discs. Thus $r$ switches $\mathcal{D}_{I}$ and $\mathcal{D}_{O}$.

\begin{definition}
We define two elementary operadic operations for surface tangles.
\begin{itemize}
\item[(1)] Tensor: taking a disjoint union of two surface tangles.
\item[(2)] Contraction: gluing two discs of a surface tangle whose boundaries are mirror images.
\end{itemize}
\end{definition}
Modulo topological isotopy in the 3D space, the tensor is unique, but
there are inequivalent contractions.
The composition of planar tangles can be decomposed as a contraction and a tensor.

\subsection{Surface algebras}

We define surface algebras as finite dimensional representations of surface tangles whose target spaces are indexed by the boundary condition $\partial \mathcal{D}$:

\begin{definition}\label{Def:SA}
A surface algebra $\SA_{\bullet}$ is a representation $Z$ of surface tangles on the tensor products of a family of  finite dimensional vector spaces $\{\mathscr{S}_{i}\}_{i \in \partial \mathcal{D}}$, having the following axioms:
	\begin{itemize}
	\item[(1)] Boundary condition: For a surface tangle $T$, $Z(T)$ is a vector in $\displaystyle \bigotimes_{D\in \mathcal{D}(T)} \SA_{\partial D}$.
	\item[(1')] Second boundary condition: If $T$ has no discs, then $Z(T)$ is a scalar in the ground field.
	\item[(2)] Duality: For any $i \in \partial \mathcal{D}$, $\SA_{r(i)}$ is the dual space of $\SA_{i}$.
	\item[(3)] Isotopy invariance: The representation $Z$ is well-defined up to isotopy in the 3D space.
	\item[(4)] Naturality: The following commutative diagram holds:

\begin{center}
 \begin{tikzpicture}
  \begin{scope}[node distance=4cm, auto, xscale=1, yscale=1]
  \foreach \x in {0,1,2,3} {
  \foreach \y in {0,1,2,3} {
  \coordinate (A\x\y) at ({2*\x},{.7*\y});
  }}
  \foreach \y in {0,3}{
  \node at (A0\y) {surface tangles};
  \node at (A3\y) {vectors};
  \draw[->] (A1\y)  to node {$Z$} (A2\y);
  }
  \foreach \x in {0,3}{
  \draw[->] (A\x2) to node [swap] {tensor/contraction} (A\x1);
  }
  \end{scope}
  \end{tikzpicture}
\end{center}

	\end{itemize}

\end{definition}
We also call $Z(T)$ the {\it partition function} of $T$ from the statistic point of view.

\begin{definition}
The partition function of a sphere is called the 2D sphere value, denoted by $\zeta$.
The partition function of a closed string in a sphere is $\delta\zeta$. We call $\delta$ the 1D circle value.
\end{definition}

If we restrict the representation $Z$ to genus-0 tangles with one output disc, then we recover unital, finite dimensional, spherical planar algebras \footnote{
The spherical condition for planar algebra is defined based on the evaluable condition, namely the 0-box space is one-dimensional \cite{Jon12}. The spherical condition of surface algebras on the sphere does not require this one-dimensional condition. Typical examples of such planar algebras are graph planar algebras. \cite{Jon00}.}. Moreover, $\delta$ is the statistical dimension of the planar algebra.

\begin{definition}
We say a surface algebra is an extension of a planar algebra, if the restriction of its partition function  $Z$ on the planar tangles is the partition function of the planar algebra.
\end{definition}

\begin{remark}
If the regions and strings of surface tangles are colored, then the index set $\mathbb{N}$ will be replaced by all permissible colors of the boundary of a disc.
\end{remark}

\begin{remark}
If one considers surface algebras as 2D TQFT with line defects, then it is better to consider the discs of surface tangles as holes. However, we emphasize that these surfaces are boundaries of 3-manifolds, thus the notion of discs is more reasonable.
\end{remark}

\begin{notation}
For an input disc $D$, the boundary condition $\partial D$ only depends on the number of marked points $n$. Thus we denote $\SA_{\partial D}$ by $\SA_n$ and its dual space by $\SA_n^*$.
\end{notation}

We can consider $Z(T)$ as a multi-linear transformation on the vector space  $\{\mathscr{S}_{n}\}_{n \in \mathbb{N}}$ from input discs to output discs.

Let us extend some notions from planar algebra to surface algebras.

%
%

\begin{notation}
We use a thick string labelled by a number $n$ to indicates $n$ parallel strings.
\end{notation}

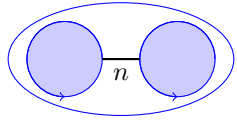
\begin{figure}[h]
\begin{tikzpicture}
\draw[blue] (.25,0) ellipse (1.5 and .75);
\draw[fill,blue!20] (0,0) arc (0:360:.5);
\draw[blue] (0,0) arc (0:360:.5);
\draw[->,blue] (0,0) arc (0:270:.5);
\draw[fill,blue!20] (1.5,0) arc (0:360:.5);
\draw[blue] (1.5,0) arc (0:360:.5);
\draw[->,blue] (1.5,0) arc (0:270:.5);
\draw[thick] (0,0)--(.5,0);
\node at (.25,-.2) {$n$};
\end{tikzpicture}
\caption{The genus-0 tangle for the bilinear form $B_n$.} \label{Fig:Bn}
\end{figure}

\begin{notation}
The genus-0 tangle in Fig.~\ref{Fig:Bn} defines a bilinear form $B_n$ on $\SA_{n}$.
\end{notation}

\begin{definition}
A surface algebra is called
{\it non-degenerate}, if the bilinear form $B_n$  is non-degenerate for all $n$.
\end{definition}

If the surface algebra is non-degenerate, then the bilinear form $B_n$ induces an isomorphism $D_n$ from the vector space $\SA_{n}$ to its dual space $\SA_{n}^*$. From this point of view, the tangles for the map $D_n$ and its inverse $D_n^{-1}$ are given in Fig.~\ref{Fig:Dn}. So we can identity the vector space $\SA_n$ with its dual using these duality maps, denoted by $D$ for short.

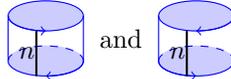
\begin{figure}
\begin{tikzpicture}
\begin{scope}[xscale=.25,yscale=.1]
\foreach \x in {0.5}{
\draw[fill,blue!20] (\x,0) arc (-180:180:2);
\draw[fill,blue!20] (\x,6) arc (-180:180:2);
\draw[blue] (\x,6) arc (-180:180:2);
\draw[->,blue] (\x,6) arc (-180:-90:2);
\draw[blue] (\x,0) arc (-180:0:2);
\draw[->,blue] (\x+4,0) arc (0:-90:2);
\draw[blue,dashed] (\x,0) arc (180:0:2);
}
\draw[blue] (.5,0)--(.5,6);
\draw[blue] (4.5,0)--(4.5,6);
\node at (6.5,3) {and};
\draw[thick] (2,-2)--(2,4);
\node at (2-.5,1) {$n$};
\end{scope}

\begin{scope}[shift={(2,0)},xscale=.25,yscale=.1]
\foreach \x in {0.5}{
\draw[fill,blue!20] (\x,0) arc (-180:180:2);
\draw[fill,blue!20] (\x,6) arc (-180:180:2);
\draw[blue] (\x,6) arc (-180:180:2);
\draw[->,blue] (\x+4,6) arc (0:-90:2);
\draw[blue] (\x,0) arc (-180:0:2);
\draw[->,blue] (\x,0) arc (-180:-90:2);
\draw[blue,dashed] (\x,0) arc (180:0:2);
}
\draw[blue] (.5,0)--(.5,6);
\draw[blue] (4.5,0)--(4.5,6);
\draw[thick] (2,-2)--(2,4);
\node at (2-.5,1) {$n$};
\end{scope}
\end{tikzpicture}
\caption{The tangles for $D_n$ and $D_n^{-1}$.}  \label{Fig:Dn}
\end{figure}

\begin{definition}
Suppose $^*$ is an anti-linear involution on $\SA_{n}$, $n\in \mathbb{N}$. Then $R(x):= D(x^*)$ is an anti-linear isomorphism from $\SA_{n}$ to $\SA_{n}^*$. We still denote its inverse and the linear extension on the tenor power by $R$. Then
\be
\langle x,y\rangle:=B_n(x,y^*) =R(y)(x)
\ee
is an inner product on $\SA_{n}$.
\end{definition}

\begin{remark}
The bilinear form in planar algebras is $\frac{1}{\zeta}B_n$.
\end{remark}

\begin{definition}

A surface algebra is called a surface $^*$-algebra, if it has an anti-linear involution, such that for any surface tangle $T$,
\be
Z(r(T))=R(Z(T)).
\ee

\end{definition}

\begin{definition}
A surface $^*$-algebra $\SA_{\bullet}$ is called (semi-)positive, if the inner product $\langle \cdot,\cdot \rangle$ is (semi-)positive.
\end{definition}

Note that positivity implies non-degeneracy.

For a positive surface algebra $\SA_{\bullet}$, the vector space $\SA_n$ is a Hilbert space. Moreover, the map $R$ is the Riesz representation. Thus we can consider a positive surface algebra as a Hilbert space representation of surface tangles satisfying an additional commutative diagram:

\be \label{Equ:reflection}
\raisebox{-1cm}{
 \begin{tikzpicture}
  \begin{scope}[node distance=4cm, auto, xscale=1, yscale=1]
  \foreach \x in {0,1,2,3} {
  \foreach \y in {0,1,2,3} {
  \coordinate (A\x\y) at ({2*\x},{.7*\y});
  }}
  \foreach \y in {0,3}{
  \node at (A0\y) {surface tangles};
  \node at (A3\y) {vectors};
  \draw[->] (A1\y)  to node {$Z$} (A2\y);
  }
  \draw[->] (A02) to node [swap] {reflection} (A01);
  \draw[->] (A32) to node [swap] {Riesz representation} (A31);
  \end{scope}
  \end{tikzpicture}} ~~.
\ee


\subsection{Labelled tangles}
For a surface tangle, we can partially fill its discs by a vector with compatible boundary condition. We consider the result as a labelled tangle. Let us extend the representation $Z$ of surface tangles to labelled tangles.

\begin{definition}
Suppose $\SA_{\bullet}$ is a surface algebra and $T$ is a surface tangle. Let $S$ be a subset of $\mathcal{D}(T)$ and $v$ be a vector in  $\displaystyle \bigotimes_{D\in S} \SA_{r(\partial D)}$.
We call the pair $T$ and $v$ a labelled tangle, denoted by $T \circ_S v$, or $T(v)$ for short, in the sense that the discs in $S$ are labelled by the vector $v$.
We call it fully labelled, if all discs are labelled.
We define the partition function of the labelled tangle $T(v)$ by
\be \label{Equ:ZTv}
Z(T(v)):=v(Z(T)),
\ee
where $\displaystyle v \in \bigotimes_{D\in S} \SA_{r(\partial D)}$ is considered as a partial linear functional on $\displaystyle \bigotimes_{D\in \mathcal{D}(T)} \SA_{\partial D}$.
\end{definition}

\begin{definition}
We define the reflection on a labelled tangle $T(v)$ by
\be \label{Equ:rTv}
r(T(v))=r(T)(R(v)).
\ee

\end{definition}

Suppose $\SA_{\bullet}$ is a surface algebra and $T$ is a surface tangle. Then $Z(T) \in \displaystyle \bigotimes_{D\in \mathcal{D}(T)} \SA_{\partial D}$.
Let $S$ be a subset of $\mathcal{D}(T)$. Then each vector $v$ in  $\displaystyle \bigotimes_{D\in S} \SA_{r(\partial D)}$ is a partial linear functional on $\displaystyle \bigotimes_{D\in \mathcal{D}(T)} \SA_{\partial D}$. Moreover, $v(Z(T))$ is a vector in $\displaystyle \bigotimes_{D\in \mathcal{D}(T)\setminus S} \SA_{r(\partial D)}$, corresponding to the unlabelled discs of $T$.

\begin{theorem}\label{Thm:Labelled tangles}
For a surface algebra $\SA_{\bullet}$, the extended representation $Z$ of labelled tangles satisfies all axioms in Definition \ref{Def:SA}.
\end{theorem}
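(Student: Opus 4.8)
The plan is to verify that the four axioms (1)--(4) of Definition~\ref{Def:SA}, together with the second boundary condition (1'), continue to hold when $Z$ is extended to labelled tangles via \eqref{Equ:ZTv}. The strategy throughout is that a labelled tangle $T\circ_S v$ is, by definition, nothing more than the vector $Z(T)$ with a partial functional $v$ applied, so every statement about labelled tangles should be reduced to the corresponding (already established) statement about $Z$ on unlabelled surface tangles, using only multilinear algebra on the spaces $\SA_n$ and $\SA_n^*$.

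First I would dispose of the boundary conditions. For (1): given $T\circ_S v$ with $v\in\bigotimes_{D\in S}\SA_{r(\partial D)}$, since $Z(T)\in\bigotimes_{D\in\mathcal D(T)}\SA_{\partial D}$ and $\SA_{r(\partial D)}$ is the dual of $\SA_{\partial D}$ by axiom~(2) for the original surface algebra, contracting the $S$-tensor-factor of $Z(T)$ against $v$ leaves exactly a vector in $\bigotimes_{D\in\mathcal D(T)\setminus S}\SA_{\partial D}$, which is the asserted target space (this is precisely the last displayed remark before the theorem). For (1'): if $T(v)$ has no \emph{unlabelled} discs, then either $T$ itself has no discs, in which case $S=\varnothing$ and $Z(T(v))=Z(T)$ is already a scalar; or $S=\mathcal D(T)$, in which case $v$ is a full functional on $\bigotimes_{D\in\mathcal D(T)}\SA_{\partial D}$ and $v(Z(T))$ is a scalar by construction. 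Axiom~(2) is unchanged, since the duality statement concerns the spaces $\SA_i$, not the representation.

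Next, isotopy invariance (3). An isotopy of the labelled tangle $T(v)$ restricts to an isotopy of the underlying tangle $T$ (fixing the labelled discs setwise), under which $Z(T)$ is invariant by axiom~(3) for the original surface algebra; applying the fixed functional $v$ then gives $Z(T(v))$ invariant. One should note here the mild subtlety that an isotopy could permute labelled discs carrying the same boundary condition, but the tensor $v$ transforms compatibly under the induced permutation of factors, so $v(Z(T))$ is unchanged.

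The main work, and the step I expect to be the real obstacle, is naturality (4): compatibility with tensor and with contraction. Tensor is the easy half --- $Z\big((T_1(v_1))\sqcup(T_2(v_2))\big)$ should equal $Z(T_1(v_1))\otimes Z(T_2(v_2))$, which follows from naturality of $Z$ on unlabelled tangles plus the fact that applying $v_1\otimes v_2$ factor-by-factor commutes with the tensor product of the underlying vectors. Contraction is more delicate: one must check that gluing two discs of a labelled tangle agrees with first applying $v$ and then contracting the (now unlabelled) tangle --- and one must handle the case where one or both glued discs lie in $S$. When the two glued discs are unlabelled, the identity is immediate from naturality of $Z$ and the fact that $v$ acts on the remaining factors, which are untouched by the contraction. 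When a glued disc is labelled, contraction of a labelled disc against its mirror image should be interpreted as pairing the label $v$ through the bilinear form $B$, i.e.\ as a trace/partial contraction on the $\SA$-factors; the key point is that, because $\SA_{r(\partial D)}$ is genuinely the dual of $\SA_{\partial D}$ and the functional-evaluation in \eqref{Equ:ZTv} is associative, ``contract then evaluate $v$'' and ``evaluate $v$ then contract'' give the same scalar or vector. I would formalize this by writing both sides as the same composite multilinear contraction of $Z(T)$, $v$, and the duality pairing, and invoking associativity/commutativity of tensor contraction --- the only thing to be careful about is bookkeeping of which tensor factor is paired with which. Once naturality is in hand, the commutative diagram of Definition~\ref{Def:SA}(4) holds for labelled tangles, completing the proof.
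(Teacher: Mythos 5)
Your proposal is correct and follows essentially the same route as the paper: every axiom for labelled tangles is reduced to the corresponding axiom for unlabelled surface tangles via the defining formula $Z(T(v))=v(Z(T))$ together with elementary multilinear algebra, and your verification of the tensor case of naturality is the same computation the paper writes out in detail. Your additional care with the contraction case (labelled versus unlabelled glued discs) fills in what the paper dismisses as ``similar,'' so nothing is missing.
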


\begin{proof}
Axioms (1') and (3) follow from the corresponding axioms for surface tangles.

Axiom (1) and (4) follow from the corresponding axioms for surface tangles and Equation \eqref{Equ:ZTv}.

Axiom (2) follows the corresponding axioms for surface tangles and Equation \eqref{Equ:rTv}.

We give a proof for the tensor in axiom (4) in details. The others are similar.
Suppose $T_1(v_1)$ and $T_2(v_2)$ are labelled tangles, then their disjoint union $T_1(v_1) \otimes T_2(v_2)=(T_1\otimes T_2)(v_1\otimes v_2)$
 is a labelled tangle. So
\begin{align*}
  &Z(T_1(v_1) \otimes T_2(v_2))\\
=&Z((T_1\otimes T_2)(v_1\otimes v_2))\\
=&(v_1\otimes v_2)(Z(T_1\otimes T_2))\\
=&(v_1\otimes v_2)(Z(T_1)\otimes Z(T_2))\\
=&v_1(Z(T_1)) \otimes v_2(Z(T_2))\\
=&Z(T_1(v_1)) \otimes Z(T_2(v_2))
\end{align*}
\end{proof}

Let $T(v)$ be a a labelled tangle containing $T_1(v_1)$ as a sub labelled tangle. In other words, there is a labelled tangle $T_2(v_2)$,
such that $T(v)$ is a multiple contractions between $T_1(v_1)$ and $T_2(v_2)$. We denote it by $T(v)= T_1(v_1) \circ_S T_2(v_2)$, where $S$ indicates the unlabelled discs are that glued.
If $T_3(v_3)$ is a labelled tangle which has the same partition function as $T_1(v_1)$. Then we can identify their unlabelled discs. If we replace $T_1(v_1)$
by $T_3(v_3)$ in $T(v)$, then we obtain a new labelled tangle  $T_3(v_3) \circ_S T_2(v_2)$. By Theorem \ref{Thm:Labelled tangles}, we have that

\begin{corollary}
If $Z(T_1(v_1))=Z(T_3(v_3))$, then $Z(T_1(v_1) \circ_S T_2(v_2))=Z(T_3(v_3) \circ_S T_2(v_2))$.
\end{corollary}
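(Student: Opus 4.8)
The plan is to reduce this corollary to a single application of Theorem~\ref{Thm:Labelled tangles}, specifically to the naturality axiom (4) for the extended representation $Z$ on labelled tangles, together with the hypothesis that $T_1(v_1)$ and $T_3(v_3)$ have the same partition function and hence can be identified along their unlabelled discs. First I would make precise the meaning of ``$T(v)=T_1(v_1)\circ_S T_2(v_2)$'': by definition this says that $T(v)$ is obtained from the disjoint union $T_1(v_1)\otimes T_2(v_2)$ by a sequence of contractions that glue each unlabelled disc in $S\subseteq\mathcal{D}(T_1)$ to a matching (mirror-image boundary) unlabelled disc of $T_2$. Since the only discs of $T_1(v_1)$ touched by these contractions are the unlabelled ones in $S$, and since $Z(T_1(v_1))=Z(T_3(v_3))$ forces in particular that the two labelled tangles have the same collection of unlabelled discs with the same boundary conditions $\partial D$ (this is what lets us ``identify their unlabelled discs''), the very same sequence of contractions applies verbatim to $T_3(v_3)\otimes T_2(v_2)$ and produces the labelled tangle $T_3(v_3)\circ_S T_2(v_2)$.

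Next I would compute both sides using naturality. By axiom (4) of Theorem~\ref{Thm:Labelled tangles} applied to the tensor operation, $Z(T_1(v_1)\otimes T_2(v_2))=Z(T_1(v_1))\otimes Z(T_2(v_2))$, and applied to each contraction in turn, $Z$ intertwines the contraction on labelled tangles with the corresponding contraction (a partial trace / pairing of dual factors $\SA_{\partial D}$ with $\SA_{r(\partial D)}$) on vectors. Writing $c_S$ for the composite contraction operation on the vector side, this gives
\be
Z(T_1(v_1)\circ_S T_2(v_2)) = c_S\bigl(Z(T_1(v_1))\otimes Z(T_2(v_2))\bigr).
\ee
The same chain of equalities with $T_1(v_1)$ replaced by $T_3(v_3)$ yields $Z(T_3(v_3)\circ_S T_2(v_2))=c_S\bigl(Z(T_3(v_3))\otimes Z(T_2(v_2))\bigr)$. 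Now substitute the hypothesis $Z(T_1(v_1))=Z(T_3(v_3))$: the two right-hand sides are literally the same vector, so $Z(T_1(v_1)\circ_S T_2(v_2))=Z(T_3(v_3)\circ_S T_2(v_2))$, as claimed.

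I expect the main obstacle to be purely bookkeeping rather than conceptual: one must check that the contraction operation $c_S$ on the vector side is genuinely determined by the boundary data of the discs being glued and is insensitive to anything else about $Z(T_1(v_1))$ versus $Z(T_3(v_3))$ — i.e., that $c_S$ acts only on the tensor factors indexed by $S$ and its mirror partners in $T_2$, leaving the remaining factors untouched. This is exactly the content of the naturality square in Definition~\ref{Def:SA}(4) as extended in Theorem~\ref{Thm:Labelled tangles}, so no new argument is needed; the only care required is to match up which discs of $T_3(v_3)$ are identified with which unlabelled discs of $T_1(v_1)$, using that equal partition functions live in the same space $\bigotimes_{D\in\mathcal{D}(T_1)\setminus(\text{labelled})}\SA_{\partial D}$. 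Once that identification is fixed, the argument is just the associativity of the operadic operations plus the functoriality of $Z$, and the corollary follows immediately.
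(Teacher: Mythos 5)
Your argument is exactly the paper's: the corollary is stated there as an immediate consequence of Theorem~\ref{Thm:Labelled tangles}, i.e.\ of the naturality axiom (4) for the extended representation $Z$ on labelled tangles applied to the tensor and the contractions defining $\circ_S$, followed by substituting $Z(T_1(v_1))=Z(T_3(v_3))$. Your write-up just spells out the bookkeeping the paper leaves implicit, so it is correct and takes essentially the same route.
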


Since the replacement of $T_1(v_1)$ by $T_3(v_3)$ will not affect the partition function, we can consider it as a {\it relation} of labelled tangles, denoted by $T_1(v_1)=T_3(v_3)$.


The following genus-0 tangle $I_n$ has one input disc and one output disc:
\begin{center}
\begin{tikzpicture}
\begin{scope}[xscale=.25,yscale=.1]
\foreach \x in {0.5}{
\draw[fill,blue!20] (\x,0) arc (-180:180:2);
\draw[fill,blue!20] (\x,6) arc (-180:180:2);
\draw[blue] (\x,6) arc (-180:180:2);
\draw[->,blue] (\x,6) arc (-180:-90:2);
\draw[blue] (\x,0) arc (-180:0:2);
\draw[->,blue] (\x,0) arc (-180:-90:2);
\draw[blue,dashed] (\x,0) arc (180:0:2);
}
\draw[blue] (.5,0)--(.5,6);
\draw[blue] (4.5,0)--(4.5,6);
\draw[thick] (2,-2)--(2,4);
\node at (2-.5,1) {$n$};
\end{scope}
\end{tikzpicture}
\end{center}
If $\SA_{\bullet}$ is non-degenerate, then $I_n$ defines the identity map on $\SA_n$.

For any vector $v$ in $\SA_n$, we obtain a labelled tangle  $I_n(v)$ by filling $v$ in the input disc. Then $Z(I_n(v))=v$.
So the vector $v$ can be considered as a labelled tangle, denoted by $v=I_n(v)$. Its pictorial representation is

\begin{center}
\begin{tikzpicture}
\begin{scope}[xscale=.25,yscale=.1]
\foreach \x in {0.5}{
\draw[fill,blue!20] (\x,0) arc (-180:180:2);
\draw[blue] (\x,6) arc (-180:180:2);
\draw[->,blue] (\x,6) arc (-180:-90:2);
\draw[blue] (\x,0) arc (-180:0:2);
\draw[->,blue] (\x,0) arc (-180:-90:2);
\draw[blue,dashed] (\x,0) arc (180:0:2);
}
\draw[blue] (.5,0)--(.5,6);
\draw[blue] (4.5,0)--(4.5,6);
\draw[thick] (2,-2)--(2,4);
\node at (2-.5,1) {$n$};
\node at (2.5,6) {$v$};
\end{scope}
\end{tikzpicture}
\end{center}

This construction can be generalized to any vector in the tensor product of $\{\SA_{n}\}_{n\in\mathbb{N}}$ and their dual spaces.
Therefore, we can identify vectors and labelled tangles by each other in a surface algebra.

The vector spaces $\SA_{n}$ and $\SA_{n}^*$ are dual to each other.
Let $\{\alpha_k\}$ be a basis of $\SA_{n}$ and $\{\beta_k\}$ be its dual basis. Then we have that
\be
Z(I_n)=\sum_k \alpha_k\otimes \beta_k.
\ee
The right hand side is independent of the choice of basis.

This defines a relation for labelled tangles that we call the {\it joint relation}.
\begin{proposition}[Joint relation]\label{Prop:Joint relation}
Suppose $\SA_{\bullet}$ is a non-degenerate surface algebra, then we have the join relation for labelled tangels:
\be \label{Equ:joint relation}
\raisebox{-.5cm}{
\begin{tikzpicture}
\begin{scope}[xscale=.25,yscale=.1]
\foreach \x in {0.5}{
\draw[fill,blue!20] (\x,0) arc (-180:180:2);
\draw[fill,blue!20] (\x,6) arc (-180:180:2);
\draw[blue] (\x,6) arc (-180:180:2);
\draw[->,blue] (\x,6) arc (-180:-90:2);
\draw[blue] (\x,0) arc (-180:0:2);
\draw[->,blue] (\x,0) arc (-180:-90:2);
\draw[blue,dashed] (\x,0) arc (180:0:2);
}
\draw[blue] (.5,0)--(.5,6);
\draw[blue] (4.5,0)--(4.5,6);
\draw[thick] (2,-2)--(2,4);
\node at (2-.5,1) {$n$};
\end{scope}
\end{tikzpicture}
}
=\sum_k
\raisebox{-1cm}{
\begin{tikzpicture}
\begin{scope}[xscale=.25,yscale=.1]
\foreach \x in {0.5}{
\draw[fill,blue!20] (\x,0) arc (-180:180:2);
\draw[blue] (\x,6) arc (-180:180:2);
\draw[->,blue] (\x,6) arc (-180:-90:2);
\draw[blue] (\x,0) arc (-180:0:2);
\draw[->,blue] (\x,0) arc (-180:-90:2);
\draw[blue,dashed] (\x,0) arc (180:0:2);
}
\draw[blue] (.5,0)--(.5,6);
\draw[blue] (4.5,0)--(4.5,6);
\draw[thick] (2,-2)--(2,4);
\node at (2-.5,1) {$n$};
\node at (2.5,6) {$\alpha_k$};
\end{scope}

\begin{scope}[xscale=.25,yscale=.1,shift={(0,12)}]
\foreach \x in {0.5}{
\draw[fill,blue!20] (\x,6) arc (-180:180:2);
\draw[blue] (\x,6) arc (-180:180:2);
\draw[->,blue] (\x,6) arc (-180:-90:2);
\draw[blue] (\x,0) arc (-180:0:2);
\draw[->,blue] (\x,0) arc (-180:-90:2);
\draw[blue,dashed] (\x,0) arc (180:0:2);
}
\draw[blue] (.5,0)--(.5,6);
\draw[blue] (4.5,0)--(4.5,6);
\draw[thick] (2,-2)--(2,4);
\node at (2-.5,1) {$n$};
\node at (2.5,0) {$\beta_k$};
\end{scope}
\end{tikzpicture}}
\ee
Consequently, for the genus-$g$ surface $S_g$, we have
\be
Z(S_g)=\zeta^{1-g}.
\ee

\end{proposition}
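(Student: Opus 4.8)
The plan is to establish the joint relation first and then derive the genus formula by induction on $g$ using repeated contractions. For the joint relation itself, I would argue as follows. By the discussion immediately preceding the proposition, a vector $v\in\SA_n$ is identified with the labelled tangle $I_n(v)$, and more generally $Z(I_n)=\sum_k\alpha_k\otimes\beta_k$ where $\{\alpha_k\}$ is any basis of $\SA_n$ and $\{\beta_k\}$ its dual basis, the right-hand side being manifestly basis-independent. The left-hand tangle in \eqref{Equ:joint relation} is $I_n$; the right-hand side is the sum over $k$ of the tangle obtained by contracting the output disc of $I_n(\alpha_k)$ against the input disc of another copy of $I_n$ labelled by $\beta_k$ in its bottom disc. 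Using Theorem \ref{Thm:Labelled tangles} (so that the operadic axioms, in particular naturality under contraction, hold for labelled tangles) together with Equation \eqref{Equ:ZTv}, the partition function of each summand on the right is $\beta_k\bigl(Z(I_n)\bigr)\otimes(\text{remaining disc})=\alpha_k$ paired appropriately; summing over $k$ and using $\sum_k\alpha_k\otimes\beta_k=Z(I_n)$ closes the circle. Concretely one checks that both sides have partition function equal to $Z(I_n)=\sum_k\alpha_k\otimes\beta_k$ as a vector in $\SA_n\otimes\SA_n^*$, using non-degeneracy so that $I_n$ genuinely represents the identity map on $\SA_n$.

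For the genus formula, the strategy is induction on $g$. The base case $g=0$ is the definition: $Z(S_0)=\zeta$ by Definition of the 2D sphere value, so $\zeta^{1-0}=\zeta$ once we note that the sphere appears here with the normalization of the surface algebra; when $g=1$ one realizes the torus $S_1$ as a single contraction of the tangle $I_n$ for $n=0$ (gluing its input disc to its output disc), whose partition function is $\sum_k\alpha_k(\beta_k)=\dim\SA_0=1$, and $1=\zeta^{1-1}=\zeta^0$. For the inductive step, I would realize $S_{g+1}$ as a contraction that glues an extra handle onto $S_g$: take the genus-$g$ surface $S_g$, remove two discs to form a genus-$g$ tangle with two boundary discs of matching (mirror) boundary conditions, glue in a handle, and observe via the joint relation that the effect on the partition function is multiplication by $Z$ of a sphere with two such discs removed and a handle attached — which by the joint relation and the sphere value equals $\zeta^{-1}$. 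Hence $Z(S_{g+1})=\zeta^{-1}Z(S_g)=\zeta^{-1}\cdot\zeta^{1-g}=\zeta^{1-(g+1)}$, completing the induction.

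More carefully, here is the cleanest way to package the inductive step so that no pictures are needed. Consider the genus-$0$ tangle with exactly two discs, one input and one output, each with $0$ marked points, whose complement is an annulus — call it $A_0$; by the joint relation (applied with $n=0$) and isotopy invariance, $Z(A_0)=\sum_k\alpha_k\otimes\beta_k$ with $\alpha_k,\beta_k$ running over dual bases of $\SA_0$, and since $\SA_0$ is one-dimensional with $Z(S_0)=\zeta$, one computes that contracting $A_0$ against any disc multiplies the partition function by $\zeta^{-1}$ (the factor $\zeta^{-1}$ coming from the normalization convention in which the identity on $\SA_0$, obtained by capping, carries the reciprocal sphere value). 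Now $S_{g+1}$ is obtained from $S_g$ by a single contraction with a genus-$1$ tangle having two boundary discs; decomposing that genus-$1$ tangle through $A_0$ shows each added handle contributes exactly one factor of $\zeta^{-1}$, giving $Z(S_{g+1})=\zeta^{-1}Z(S_g)$ and hence $Z(S_g)=\zeta^{1-g}$ by induction.

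The main obstacle I anticipate is pinning down the correct power of $\zeta$ (equivalently, $\delta$) in the normalization bookkeeping: the excerpt warns that the bilinear form in planar algebras is $\tfrac1\zeta B_n$ and that the bilinear form $B_n$ here already carries a factor of $\zeta$, so one must be scrupulous about whether capping off a disc produces a factor $\zeta$, $\zeta^{-1}$, or $1$, and similarly about whether a free closed string contributes $\delta$ or $\delta\zeta$. Getting the handle-attachment factor to come out as exactly $\zeta^{-1}$ — rather than, say, $\delta^{\pm1}\zeta^{-1}$ — is the delicate point, and it is resolved precisely by the joint relation together with the stipulated sphere value $\zeta$ and circle value $\delta\zeta$; everything else is a routine application of naturality (axiom (4)) and isotopy invariance (axiom (3)).
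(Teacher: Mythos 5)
Your argument is correct and follows the same route the paper leaves implicit: the joint relation is just the pictorial form of $Z(I_n)=\sum_k\alpha_k\otimes\beta_k$ (valid by non-degeneracy), and $Z(S_g)=\zeta^{1-g}$ follows by cutting the $g$ handles one at a time, each tube contributing a factor $\zeta^{-1}$ because the dual basis vector of the cap in $\SA_0$ carries the normalization $\zeta^{-1}$. The only caveat — shared with the paper, whose statement already requires it — is that your torus computation $Z(S_1)=\dim\SA_0=1$ tacitly assumes $\SA_0$ is one-dimensional.
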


\subsection{Unique extension}

\begin{theorem}\label{Thm:unique extension}
For any $\zeta\neq0$, any non-degenerate, unital, finite dimensional, spherical planar algebra $\PA_{\bullet}$ has an unique extension to a non-degenerate  surface algebra $\SA_{\bullet}$ with 2D sphere value $\zeta$.
\end{theorem}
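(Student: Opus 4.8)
The plan is to prove \emph{existence} by cutting an arbitrary surface tangle into genus-$0$ pieces, evaluating those pieces with $\PA_{\bullet}$, and reassembling them; and to prove \emph{uniqueness} by observing that the joint relation of Proposition~\ref{Prop:Joint relation}, which holds in \emph{any} non-degenerate surface algebra, already pins the partition function down. The finite-dimensionality hypothesis enters exactly so that the sums $\sum_k\alpha_k\otimes\beta_k$ attached to cutting circles are finite.

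For uniqueness, suppose $\SA_{\bullet}$ is a non-degenerate extension of $\PA_{\bullet}$ with $2$D sphere value $\zeta$. Restricting to genus-$0$ one-output tangles shows $\SA_n=\PA_n$ as vector spaces, and by the remark following Figure~\ref{Fig:Bn} the bilinear form $B_n$ equals $\zeta$ times the (non-degenerate) planar-algebra form, hence is non-degenerate since $\zeta\neq 0$; so the duality maps $D_n$ of Figure~\ref{Fig:Dn}, built from planar tangles, are determined, and the cup tangle $r(B_n)$ has value $B_n^{-1}$, which is canonically determined by $B_n$. Consequently every genus-$0$ surface tangle --- obtained from planar tangles by contracting against copies of $r(B_n)$ to turn superfluous output discs into input discs --- has a forced value. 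Now given an arbitrary surface tangle $T$, choose a finite family $\mathcal{C}$ of disjoint smooth circles on $\partial T$, disjoint from the discs and transverse to the strings, such that cutting along $\mathcal{C}$ yields only genus-$0$ surfaces; such a family exists, e.g.\ by successively cutting along non-separating circles until every component has genus $0$. Cutting along $c\in\mathcal{C}$ replaces $c$ by a pair of discs $D_c',D_c''$ with mirror boundary conditions and exhibits $T$ as a sequence of contractions applied to a disjoint union of genus-$0$ tangles; by the joint relation each such contraction is insertion of $\sum_k\alpha_k\otimes\beta_k$ followed by contraction, and by naturality (Axiom~(4)) $Z$ commutes with tensor and contraction. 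Hence $Z(T)$ is completely determined, so there is at most one extension; in particular a closed genus-$g$ surface is forced to the value $\zeta^{1-g}$.

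For existence, one runs exactly this recipe as a \emph{definition}: fix for each $T$ a cutting family $\mathcal{C}$ as above, assign each genus-$0$ piece its planar-algebra value (normalizing its discs via the $D_n$ and $r(B_n)$ as needed), and glue by contracting against $\sum_k\alpha_k\otimes\beta_k$ on each pair $D_c',D_c''$, taking tensor products over connected components. Granting well-definedness, the axioms of Definition~\ref{Def:SA} follow routinely: (1) and (1$'$) are built in; (2) is the identification $\SA_{r(i)}=\SA_i^*$ via $B_n$; (3) holds because a $3$D isotopy of $T$ carries any admissible cutting family to another; (4) holds because a tensor, resp.\ a contraction, of surface tangles can be absorbed into the chosen cutting families. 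Non-degeneracy of $\SA_{\bullet}$ is the non-degeneracy of $B_n$ noted above, and taking the empty cutting family on a genus-$0$ one-output tangle recovers $Z_{\PA}$, so $\SA_{\bullet}$ is genuinely an extension.

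The main obstacle is the \emph{well-definedness}: that $Z(T)$ is independent of the cutting family (and of the auxiliary normalizations of genus-$0$ pieces). As in the planar case --- where one establishes a presentation of the planar operad by elementary tangles and relations --- this requires a presentation of the surface operad in the presence of strings: any two admissible cutting families on $\partial T$ are connected by a finite sequence of local moves (isotoping a circle across a piece, the genus-$0$ ``flip'' move exchanging the two pants decompositions of a four-holed sphere, and handle slides relating non-separating circles), and $Z$ must be shown invariant under each. Invariance under the isotopy and handle moves is essentially the definition of contraction together with the joint relation; invariance under the flip is an identity in $\PA_{\bullet}$ saying that the planar partition function of a four-holed sphere is symmetric under its two pants decompositions, and this is exactly where \emph{sphericality} (sliding strings around the back of a sphere) and non-degeneracy are used. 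Once these finitely many local identities are verified, connectivity of the space of admissible cutting families --- the analogue with marked points of the connectedness of the pants/cut-system complex --- upgrades them to global independence, completing the proof.
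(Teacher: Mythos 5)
Your uniqueness argument is sound and matches the paper's in substance: once the joint relation is known to hold in any non-degenerate extension, cutting down to genus-$0$ pieces forces every value, and the genus-$0$ values are those of $\PA_{\bullet}$ (rescaled by $\zeta$). The gap is in the existence half, which is where all the real work lives. You correctly identify well-definedness as the crux, but you then leave it as a program: the ``flip'' invariance is asserted to be ``an identity in $\PA_{\bullet}$'' without being stated or verified, and the claim that any two admissible cutting families are connected by isotopies, flips and handle slides is a nontrivial topological input (a Hatcher--Thurston-type connectivity statement for cut systems, here further complicated by the marked discs and strings on the surface) that you neither prove nor cite. As written, the proof of existence reduces to two unproved assertions, either of which could in principle hide the whole difficulty. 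A smaller but real slip: cutting the $3$-manifold along a circle that is merely non-separating on $\partial T$ does not reduce genus unless the circle bounds a compressing disc in the interior; you need to cut along meridians of the handlebody, which is also what makes the resulting pieces genus-$0$ \emph{tangles} rather than just genus-$0$ surfaces.

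The paper avoids both difficulties by working three-dimensionally: it contracts the interior handlebody of $T$ to a spine graph $G_T$ with planar homotopy type, decomposes $T$ into genus-$0$ pieces by applying the joint relation across the edges of $G_T$, and observes that any two such spines differ by isotopy and the elementary edge-contraction move. Well-definedness then reduces to exactly two identities, both verified by finite-dimensional linear algebra with dual bases: independence of where along an edge one cuts (your ``flip,'' in the form $\sum_k \alpha_k\otimes(\beta_k\,f) = \sum_{k'}(f\,\alpha'_{k'})\otimes\beta'_{k'}$ for $f\in\SA_{n+m}$), and invariance under contracting an edge of the spine. If you want to salvage your surface-level cut-system approach, you must either prove the connectivity of the relevant complex of cut systems with marked points and verify each generator move, or replace it with this spine argument, where the set of required local identities is visibly finite and elementary.
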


In other words, the joint relation and the local relations defined by the planar algebra are consistent and the  2D sphere value $\zeta$ is a freedom.

\begin{proof}
Since $\PA_{\bullet}$ is non-degenerate and $\zeta \neq 0$, its extension $\SA_{\bullet}$ is non-degenerate. Moreover, the inner product on  $\SA_{\bullet}$ is $\zeta$ times the inner product on $\PA_{\bullet}$. The anti-linear isomorphism $D_n: \SA_n \to \SA_n^*$ is defined by the Riesz representation.

The interior 3-manifold of a fully labelled surface tangle $T$ is contractable to a graph $G_T$, homotopic to a planar graph.
Moreover, the graph $G_T$ is unique up to the contraction move which contracts an adjacent pair of an $m$-valent vertex and an $n$-valent vertex to an $(m+n-2)$-valent vertex:

\begin{center}
\begin{tikzpicture}
\begin{scope}[scale=.5]
\foreach \x in {0,1,2,3}
{
\coordinate (A\x) at ({cos(360/4*\x)}, {sin(360/4*\x)});
\draw (0,0)--(A\x);
}
\begin{scope}[shift={(-1,0)}]
\foreach \x in {0,1,2}
{
\coordinate (A\x) at ({cos(360/3*\x)}, {sin(360/3*\x)});
\draw (0,0)--(A\x);
}
\end{scope}
\node at (2,0) {$\to$};
\node at (6,0) {$.$};
\begin{scope}[shift={(4,0)}]
\foreach \x in {0,1,2,3,4}
{
\coordinate (A\x) at ({cos(360/5*\x)}, {sin(360/5*\x)});
\draw (0,0)--(A\x);
}
\end{scope}
\end{scope}
\end{tikzpicture}
\end{center}

We consider $T$ as a small neighborhood of $G_T$. We can decompose $T$ into fully labelled genus-0 tangles by applying the joint relation \eqref{Equ:joint relation} to all edges of $G_T$.   Thus the partition function $Z(T)$ is determined by the value of $Z$ on fully labelled genus-0 tangles. Therefore the extension is unique for a fixed $\zeta$.

Now we prove the existence of such extension.
We need to prove that the partition function  $Z(T)$ is well-defined.

Let $\{\alpha_k\}$ be a basis of $\SA_{n}$ and $\{\beta_k\}$ be its dual basis.
Let $\{\alpha_{k'}\}$ be a basis of $\SA_{m}$ and $\{\beta_{k'}\}$ be its dual basis.
By basic linear algebra, for any $f\in \SA_{n+m}$,  we have that

\be
\sum_k
\raisebox{-1.5cm}{
\begin{tikzpicture}
\begin{scope}[xscale=.25,yscale=.1]
\foreach \x in {0.5}{
\draw[fill,blue!20] (\x,0) arc (-180:180:2);
\draw[blue] (\x,6) arc (-180:180:2);
\draw[->,blue] (\x,6) arc (-180:-90:2);
\draw[blue] (\x,0) arc (-180:0:2);
\draw[->,blue] (\x,0) arc (-180:-90:2);
\draw[blue,dashed] (\x,0) arc (180:0:2);
}
\draw[blue] (.5,0)--(.5,6);
\draw[blue] (4.5,0)--(4.5,6);
\draw[thick] (2,-2)--(2,4);
\node at (2-.5,1) {$n$};
\node at (2.5,6) {$\alpha_k$};
\end{scope}

\begin{scope}[xscale=.25,yscale=.1,shift={(0,12)}]
\foreach \x in {0.5}{
\draw[fill,blue!20] (\x,12) arc (-180:180:2);
\draw[blue] (\x,12) arc (-180:180:2);
\draw[->,blue] (\x,12) arc (-180:-90:2);
\draw[blue] (\x,0) arc (-180:0:2);
\draw[->,blue] (\x,0) arc (-180:-90:2);
\draw[blue,dashed] (\x,0) arc (180:0:2);
}
\draw[blue] (.5,0)--(.5,12);
\draw[blue] (4.5,0)--(4.5,12);
\draw[thick] (2,-2)--(2,10);
\draw[fill,white] (4,5) arc (0:360:1.5 and 2);
\draw[->,blue]  (4,5) arc (0:360:1.5 and 2);
\node at (2.5,5) {$f$};
\node at (2-.5,8) {$m$};
\node at (2-.5,1) {$n$};
\node at (2.5,0) {$\beta_k$};
\end{scope}
\end{tikzpicture}}
=\sum_{k'}
\raisebox{-1.5cm}{
\begin{tikzpicture}
\begin{scope}[xscale=.25,yscale=.1]
\foreach \x in {0.5}{
\draw[fill,blue!20] (\x,-6) arc (-180:180:2);
\draw[blue] (\x,6) arc (-180:180:2);
\draw[->,blue] (\x,6) arc (-180:-90:2);
\draw[blue] (\x,-6) arc (-180:0:2);
\draw[->,blue] (\x,-6) arc (-180:-90:2);
\draw[blue,dashed] (\x,-6) arc (180:0:2);
}
\draw[blue] (.5,-6)--(.5,6);
\draw[blue] (4.5,-6)--(4.5,6);
\draw[thick] (2,-8)--(2,4);
\draw[fill,white] (4,-2) arc (0:360:1.5 and 2);
\draw[->,blue]  (4,-2) arc (0:360:1.5 and 2);
\node at (2.5,-2) {$f$};
\node at (2-.5,1) {$m$};
\node at (2-.5,-5) {$n$};
\node at (2.5,6) {$\alpha'_k$};
\end{scope}

\begin{scope}[xscale=.25,yscale=.1,shift={(0,12)}]
\foreach \x in {0.5}{
\draw[fill,blue!20] (\x,6) arc (-180:180:2);
\draw[blue] (\x,6) arc (-180:180:2);
\draw[->,blue] (\x,6) arc (-180:-90:2);
\draw[blue] (\x,0) arc (-180:0:2);
\draw[->,blue] (\x,0) arc (-180:-90:2);
\draw[blue,dashed] (\x,0) arc (180:0:2);
}
\draw[blue] (.5,0)--(.5,6);
\draw[blue] (4.5,0)--(4.5,6);
\draw[thick] (2,-2)--(2,4);
\node at (2-.5,1) {$m$};
\node at (2.5,0) {$\beta'_k$};
\end{scope}
\end{tikzpicture}}
\;.
\ee
Therefore for a fixed $G_T$, $Z(T)$ is well-defined up to isotopy.

By basic linear algebra, for any $\alpha \in \PA_{n}$ and $\beta \in \PA_{n}^*$,  we have that

\be \label{Equ:joint relation}
\raisebox{-.5cm}{
\begin{tikzpicture}
\begin{scope}[xscale=.25,yscale=.1]
\foreach \x in {0.5}{
\node at (2.5,0) {$\beta$};
\node at (2.5,6) {$\alpha$};
\draw[blue] (\x,6) arc (-180:180:2);
\draw[->,blue] (\x,6) arc (-180:-90:2);
\draw[blue] (\x,0) arc (-180:0:2);
\draw[->,blue] (\x,0) arc (-180:-90:2);
\draw[blue,dashed] (\x,0) arc (180:0:2);
}
\draw[blue] (.5,0)--(.5,6);
\draw[blue] (4.5,0)--(4.5,6);
\draw[thick] (2,-2)--(2,4);
\node at (2-.5,1) {$n$};
\end{scope}
\end{tikzpicture}
}
=\sum_k
\raisebox{-1cm}{
\begin{tikzpicture}
\begin{scope}[xscale=.25,yscale=.1]
\foreach \x in {0.5}{
\node at (2.5,0) {$\beta$};
\draw[blue] (\x,6) arc (-180:180:2);
\draw[->,blue] (\x,6) arc (-180:-90:2);
\draw[blue] (\x,0) arc (-180:0:2);
\draw[->,blue] (\x,0) arc (-180:-90:2);
\draw[blue,dashed] (\x,0) arc (180:0:2);
}
\draw[blue] (.5,0)--(.5,6);
\draw[blue] (4.5,0)--(4.5,6);
\draw[thick] (2,-2)--(2,4);
\node at (2-.5,1) {$n$};
\node at (2.5,6) {$\alpha_k$};
\end{scope}

\begin{scope}[xscale=.25,yscale=.1,shift={(0,12)}]
\foreach \x in {0.5}{
\node at (2.5,6) {$\alpha$};
\draw[blue] (\x,6) arc (-180:180:2);
\draw[->,blue] (\x,6) arc (-180:-90:2);
\draw[blue] (\x,0) arc (-180:0:2);
\draw[->,blue] (\x,0) arc (-180:-90:2);
\draw[blue,dashed] (\x,0) arc (180:0:2);
}
\draw[blue] (.5,0)--(.5,6);
\draw[blue] (4.5,0)--(4.5,6);
\draw[thick] (2,-2)--(2,4);
\node at (2-.5,1) {$n$};
\node at (2.5,0) {$\beta_k$};
\end{scope}
\end{tikzpicture}}
\;.
\ee
Thus $Z(T)$ is invariant under the contraction move and it is independent of the choice the $G_T$. Therefore $Z(T)$ is well-defined for fully labelled surface tangles.

Applying the joint relation to a fully labelled tangle is equivalent to applying the inverse of the contraction move to the graph. Thus the joint relation is a relation for $Z$. Therefore we obtain an extension from $\PA_{\bullet}$ to $\SA_{\bullet}$.

\end{proof}

Consequently the general constructions of spherical planar algebras can be extended to surface algebras. For example,

\begin{corollary}\label{Cor: extension tensor}
Suppose a surface algebra $(\SA_{\bullet})_{k}$ is an extension of a planar algebra $(\PA_{\bullet})_{k}$ with sphere value $\zeta_k$, for $k=1,2$. Then
$(\SA_{\bullet})_1\otimes (\SA_{\bullet})_2$ is an extension of $(\PA_{\bullet})_1\otimes (\PA_{\bullet})_2$ with sphere value $\zeta_1\zeta_2$.
\end{corollary}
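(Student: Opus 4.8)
The plan is to construct the evident tensor-product surface algebra explicitly and then appeal to the uniqueness clause of Theorem~\ref{Thm:unique extension}. First I would record the purely planar input: the tensor product $(\PA_{\bullet})_1\otimes(\PA_{\bullet})_2$, with $n$-box space $(\PA_n)_1\otimes(\PA_n)_2$, is again a non-degenerate, unital, finite-dimensional, spherical planar algebra of the same type (its sphericity is inherited factorwise and its circle value multiplies to $\delta_1\delta_2$); and since $\zeta_1,\zeta_2\neq 0$ we have $\zeta_1\zeta_2\neq 0$. Hence Theorem~\ref{Thm:unique extension} furnishes a \emph{unique} non-degenerate surface algebra $\SA_{\bullet}$ extending $(\PA_{\bullet})_1\otimes(\PA_{\bullet})_2$ and having 2D sphere value $\zeta_1\zeta_2$. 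It therefore suffices to show that $(\SA_{\bullet})_1\otimes(\SA_{\bullet})_2$ is such an extension.

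Here $(\SA_{\bullet})_1\otimes(\SA_{\bullet})_2$ means the family $\SA_n:=(\SA_n)_1\otimes(\SA_n)_2$ together with the representation $Z:=Z_1\otimes Z_2$: for a surface tangle $T$ one reads $Z_1(T)\otimes Z_2(T)$ inside $\big(\bigotimes_{D}(\SA_{\partial D})_1\big)\otimes\big(\bigotimes_{D}(\SA_{\partial D})_2\big)\cong\bigotimes_{D}\big((\SA_{\partial D})_1\otimes(\SA_{\partial D})_2\big)$ after the canonical permutation of tensor legs. I would then verify the axioms of Definition~\ref{Def:SA} for $Z$. Axioms (1) and (1') are immediate from the corresponding statements for $Z_1$ and $Z_2$; in particular a disc-free tangle is sent to the scalar $Z_1(T)\,Z_2(T)$, so the sphere receives $\zeta_1\zeta_2$. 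Axiom (2) holds because for finite-dimensional spaces $(\SA_{r(i)})_1\otimes(\SA_{r(i)})_2\cong(\SA_i)_1^{*}\otimes(\SA_i)_2^{*}\cong\big((\SA_i)_1\otimes(\SA_i)_2\big)^{*}$. Axiom (3) is inherited from $Z_1$ and $Z_2$. For Axiom (4): the tensor operation on tangles goes to the tensor of partition functions since $Z_1$ and $Z_2$ each do so and the tensor legs reorder canonically; and a contraction of a surface tangle, which evaluates the dual pairing of some $\SA_i$ with $\SA_{r(i)}=\SA_i^{*}$, factors as the tensor of the dual pairings of $(\SA_i)_k$ with $(\SA_{r(i)})_k$ for $k=1,2$, so $Z$ again intertwines it. Finally, the genus-$0$ tangle of Fig.~\ref{Fig:Bn} sends $Z=Z_1\otimes Z_2$ to the bilinear form $B_n=(B_n)_1\otimes(B_n)_2$ on $(\SA_n)_1\otimes(\SA_n)_2$, which is non-degenerate because each $(B_n)_k$ is; hence $(\SA_{\bullet})_1\otimes(\SA_{\bullet})_2$ is a non-degenerate surface algebra with sphere value $\zeta_1\zeta_2$.

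It remains to check the planar restriction. For a planar tangle $T$ (genus $0$, one output disc) we have $Z(T)=Z_1(T)\otimes Z_2(T)$, and since $(\SA_{\bullet})_k$ extends $(\PA_{\bullet})_k$, each $Z_k$ restricted to planar tangles is the planar partition function of $(\PA_{\bullet})_k$; hence $Z$ restricted to planar tangles is precisely the representation defining the planar algebra $(\PA_{\bullet})_1\otimes(\PA_{\bullet})_2$. Thus $(\SA_{\bullet})_1\otimes(\SA_{\bullet})_2$ is a non-degenerate surface algebra extending $(\PA_{\bullet})_1\otimes(\PA_{\bullet})_2$ with 2D sphere value $\zeta_1\zeta_2$, and by the uniqueness in Theorem~\ref{Thm:unique extension} it must be \emph{the} extension, which is the assertion.

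The only load-bearing point is the contraction clause of Axiom (4): one must confirm that the various inequivalent contractions on $\bigotimes_{D}\SA_{\partial D}$ really factor through the corresponding contractions of the two components, i.e.\ that the identification $\SA_{r(i)}=\SA_i^{*}$ is the tensor of $(\SA_{r(i)})_k=(\SA_i)_k^{*}$ in a way compatible with how $Z_1$ and $Z_2$ each perform the dual pairing. This is an elementary finite-dimensional linear-algebra check; everything else is bookkeeping of tensor-leg orderings, and the conclusion is then forced by uniqueness rather than by any further computation.
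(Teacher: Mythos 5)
Your proof is correct, and it follows the route the paper intends: the paper states this corollary without proof as an immediate consequence of Theorem~\ref{Thm:unique extension}, and your argument supplies exactly the implicit details — checking that $Z_1\otimes Z_2$ satisfies the axioms of Definition~\ref{Def:SA} (the contraction clause reducing to the factorization of the dual pairing on a tensor product of finite-dimensional spaces), that non-degeneracy and the planar restriction are inherited factorwise, that a disc-free tangle evaluates to the product of scalars so the sphere value is $\zeta_1\zeta_2$, and then invoking uniqueness. Nothing is missing.
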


\begin{theorem}
Suppose a surface algebra $\SA_{\bullet}$ is an extension of a subfactor planar algebra $\PA_{\bullet}$ with sphere value $\zeta$. Then $\SA_{\bullet}$ is positive, if and only if $\zeta>0$.
\end{theorem}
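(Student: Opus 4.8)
The plan is to reduce positivity of $\SA_{\bullet}$ to positivity of the inner product on the subfactor planar algebra $\PA_{\bullet}$, using the explicit scaling between the two bilinear forms. First I would check that $\SA_{\bullet}$ is a surface $*$-algebra at all. The vector spaces $\SA_n$ coincide with $\PA_n$, so the anti-linear involution $*$ of $\PA_{\bullet}$ is already defined on each $\SA_n$; what must be verified is the reflection identity $Z(r(T))=R(Z(T))$ for \emph{every} surface tangle $T$, not only the planar ones. This I would deduce from the uniqueness statement in Theorem \ref{Thm:unique extension}: a fully labelled surface tangle is decomposed, via the joint relation (Proposition \ref{Prop:Joint relation}), into fully labelled genus-$0$ tangles; the reflection $r$ intertwines the tensor and contraction operations used in that decomposition (it sends a contraction of two discs with mirror-image boundaries to the contraction of the reflected discs, and commutes with disjoint unions), while $R$ is multiplicative for the same operations and fixes the resolution $Z(I_n)=\sum_k\alpha_k\otimes\beta_k$ of the identity. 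Hence $Z(r(T))=R(Z(T))$ propagates from the genus-$0$, one-output case — where it holds because $\PA_{\bullet}$ is a planar $*$-algebra — to all surface tangles, so $\SA_{\bullet}$ is a surface $*$-algebra.

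Next I would record the scaling of the inner product, which is already implicit in the proof of Theorem \ref{Thm:unique extension} and in the Remark after Figure \ref{Fig:Dn}: the bilinear form of the planar algebra is $\tfrac{1}{\zeta}B_n$, where $B_n$ is the surface-algebra bilinear form of Figure \ref{Fig:Bn}. Thus, writing $\langle\,\cdot\,,\cdot\,\rangle_{\SA}$ and $\langle\,\cdot\,,\cdot\,\rangle_{\PA}$ for the two inner products, one has for $x,y\in\SA_n=\PA_n$
\[
\langle x,y\rangle_{\SA}=B_n(x,y^{*})=\zeta\,\langle x,y\rangle_{\PA}.
\]

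With these two facts the theorem is immediate. Since $\PA_{\bullet}$ is a subfactor planar algebra, its inner product is positive definite on every $\PA_n$, and not every $\PA_n$ vanishes (the $0$-box space is one-dimensional). If $\zeta>0$, then $\langle x,x\rangle_{\SA}=\zeta\,\langle x,x\rangle_{\PA}>0$ for all nonzero $x$, so $\SA_{\bullet}$ is positive. Conversely, if $\SA_{\bullet}$ is positive, hence semi-positive, choose any nonzero $x\in\SA_n$; then $0\le\langle x,x\rangle_{\SA}=\zeta\,\langle x,x\rangle_{\PA}$ with $\langle x,x\rangle_{\PA}>0$ forces $\zeta\ge 0$, and $\zeta\neq 0$ (the extension exists, cf.\ Theorem \ref{Thm:unique extension}) gives $\zeta>0$. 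The only genuinely delicate point is the first step — verifying that the $*$-structure of $\PA_{\bullet}$ extends to a surface $*$-structure on $\SA_{\bullet}$ — and I expect this, rather than the linear algebra, to be the main obstacle, although it is handled cleanly by the uniqueness of the extension.
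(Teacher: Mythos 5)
Your proof is correct and follows essentially the same route as the paper's: both directions reduce to the fact that the surface-algebra inner product on each $\SA_n$ is $\zeta$ times the positive-definite inner product of the subfactor planar algebra (the paper phrases this pictorially, via the sphere being an unlabelled hemisphere glued to its mirror image and the joint relation reducing everything to hemispheres, where you invoke the scaling remark $B_n^{\PA}=\tfrac{1}{\zeta}B_n$ directly). Your preliminary paragraph checking that the $*$-structure of $\PA_{\bullet}$ extends to a surface $*$-structure on $\SA_{\bullet}$ addresses a point the paper leaves implicit and is a worthwhile supplement.
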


\begin{proof}
We consider the genus-0 labelled tangle with one disc as a hemisphere.
The sphere is a composition of an unlabelled hemisphere and its mirror image, so $\zeta>0$ is necessary.
Conversely, if $\zeta>0$, then the partition function of $\SA_{\bullet}$ is positive on the sphere. By the  joint relation \eqref{Equ:joint relation}, any labelled tangles is a sum of disjoint unions of hemispheres. The positivity follows.
\end{proof}

\section{Jones-Wassermann subfactors}

Each subfactor defines a subfactor planar algebra \cite{JonPA}. A subfactor planar algebra has an alternating shading. A subfactor is called symmetrically self-dual, if its subfactor planar algebra is unshaded, see \cite{LMP17} for further discussions and examples.



The Jones-Wassermann subfactor was first studied in the framework on conformal nets \cite{LonReh95,Was98,Xu00,KawLonMug01}.
Motivated by the reconstruction program from modular tensor categories (MTC), (cf. \cite{Tur16}), to conformal field theory (CFT),
Xu and the author have constructed $m$-interval Jones-Wassermann subfactors for modular tensor categories, and proved that these subfactors are symmetrically self-dual, called the modular self-duality for MTC \cite{LiuXu}. This is a resource of a large family of unshaded planar algebras, where the input data is a modular tensor category.

We follow the notations in \cite{LiuXu}. Let $\C$ be a unitary modular tensor category and $Irr$ be the set of irreducible objects of $\C$.
For an object $X$, its dual object is denoted by $\overline{X}$. Its quantum dimension is $d(X)$.
Let $\displaystyle \mu=\sum_{X\in Irr} d(X)^2$ be the global dimension of $\C$.
Let $\SA_{\bullet}$ be the unshaded planar algebra of the $2$-interval Jones-Wassermann subfactor for $\C$, also known as the quantum double.  By parity, $\SA_n$ is zero for odd $n$ \footnote{Since we begin with unshaded planar algebras in this paper, the vector space $\SA_{2n}$ here is $\SA_{n}$ of the subfactor planar algebra in \cite{LiuXu}.}.
The 2-interval Jones-Wassermann subfactor defines a Frobenius algebra in $\C \otimes \C^{op}$ \footnote{In \cite{LiuXu}, we considered $\C \otimes \C$ instead of  $\C \otimes \C^{op}$, which is necessary in studying the $m$-interval Jones-Wassermann subfactor for all $m\geq 1$.
In this paper, we only deal with the case $m=2$. It is more convenient to work on $\C\otimes \C^{op}$.
The opposite map here corresponds to the map $\theta_2$ on $\C$ defined in \cite{LiuXu}. }.
The object $\gamma$ for the Frobenius algebra in  $\C\otimes \C^{op}$ is
\be
\gamma=\bigoplus_{X\in Irr} X \otimes X^{op}.
\ee

Since the planar algebra is unshaded, the object $\tau$ can be further decomposed as $\gamma=\tau^2$ in $\SA_{\bullet}$, where $\tau$ is the object associated with a single string. Recall that $\delta$ is the value of a closed circle in $\SA_{\bullet}$, then the Jones index $\delta^2=\mu$.

Moreover, the Hilbert space $\SA_{2n}$ is isomorphic to $\hom_{\C \otimes \C^{op}} (1,\tau^{2n})=\hom_{\C \otimes \C^{op}} (1,\gamma^{n})$.
Let $Irr^n$ be the $n^{\rm th}$ tensor power of $Irr$. Its element is given by $\vec{X}:=X_1\otimes \cdots \otimes X_{n}$.
Then $\displaystyle d(\vec{X})=\prod_{j=1}^{n} d(X_j)$.
Let $ONB(\vec{X})$ be an ortho-normal-basis of $\hom_{\C}(1,\vec{X})$.
Following the construction in \cite{LiuXu}, the partition function of the following planar diagram in $\SA_{2n}$ is given by

\be \label{Equ:spider}
\begin{tikzpicture}
\begin{scope}[scale=.8]
  \draw (.5,0) arc (180:0:.5);
  \draw (2,0) arc (180:0:.5);
  \draw (3.5,0) arc (180:0:.5);
  \draw (5,0) arc (0:180: 2.5 and 1);
\node at (2.5,0.2) {$\cdots$};
\draw[blue] (-.5,0) rectangle (5.5,1.5);
\draw[->,blue] (-.5,1.5)--(-.5,.75);
 \draw[->] (6,.7) to (7,.7);
 \node at (6.5,1) {$Z$};
 \node at (12,.5) {$\displaystyle \delta^{\frac{n}{2}} \mu_n:= \delta^{1-\frac{n}{2}} \sum_{\vec{X} \in Irr^n} d(\vec{X})^{\frac{1}{2}} \sum_{\alpha \in ONB(\vec{X})} \alpha \otimes \alpha^{op}.$};
 \end{scope}
\end{tikzpicture}
\ee

The vector space $\SA_4$ is isomorphic to $\hom_{\C \otimes \C^{op}} (1,\tau^4)$. By Frobenius reciprocity,
we can identify the Hilbert space $\SA_4$ as
\be
\hom_{\C \otimes \C^{op}} (\tau^2,\tau^2)=\bigoplus_{X\in Irr} \hom_{\C \otimes \C^{op}} (X_D, X_D) \cong L^2(Irr),
\ee
where $X_D=X \otimes X^{op}$.
It is considered as the 1-quon space for quantum information \cite{JLW-Quon}.

Take
\be \label{Equ:basis}
\beta_X=d(X)^{-1}1_{X_D},
\ee
 where $1_{X_D}$ is the identity map in $\hom_{\C \otimes \C^{op}} (X_D, X_D)$. Then
$\{ \beta_X \}_{ X \in Irr}$ form an ONB of the 1-quon space, called the {\it quantum coordinate} \cite{LiuXu}.

\begin{notation}
We denote the bra-ket notation for the 1-quon $\displaystyle \sum_{X\in Irr}  c_X \beta_X$ by $\displaystyle \sum_{X \in Irr} c_X \ket{X}$.
\end{notation}

The modular transformation $S$ of a MTC is originally defined by a hopf link.
\footnote{ The entries of the $S$ matrix is defined by the value of a Hopf link in a MTC, usually denoted by $S_{X,Y}$. Here we write it as $S_{X}^{Y}$ while considering it as a matrix on 1-quons.}
The Fourier transform on subfactors was introduced by Ocneanu in terms of paragroups \cite{Ocn88}.
In planar algebras, it turns out to be a one-string rotation of the diagram, called the string Fourier transform (SFT), denoted by $\FS$.
In general, the SFT will change the shading of diagrams in a subfactor planar algebra. It is crucial that the planar algebra $\SA_{\bullet}$ of the Jones-Wassermann subfactor is unshaded, so that the SFT is defined on each $\SA_{n}$, $n\geq 0$.
Furthermore, Xu and the author proved that the action of $\FS$ on the quantum coordinate of the 1-quon space is the $S$ matrix in \cite{LiuXu}:

\begin{proposition}\label{Prop:SFT}
On the ONB $\{ \beta_X\}_{X\in Irr}$ of $\SA_4$, the SFT $\FS$ is the modular $S$ matrix , i.e.,
\be \label{Equ:Fourier S}
\FS(\ket{X})=\sum_{Y\in irr} S_{X}^{Y}\ket{Y} .
\ee
\end{proposition}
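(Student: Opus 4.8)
The plan is to compute the matrix of $\FS$ in the quantum coordinate basis $\{\beta_X\}_{X\in Irr}$ directly from the pictorial definition of the string Fourier transform, using the identification $\SA_4 \cong \hom_{\C\otimes\C^{op}}(\tau^2,\tau^2) = \bigoplus_{X\in Irr}\hom_{\C\otimes\C^{op}}(X_D,X_D)$ recalled above. Since $\FS$ is linear and $\{\beta_X\}_{X\in Irr}$ is an orthonormal basis, it suffices to evaluate the scalar $\langle \FS(\beta_X),\beta_Y\rangle$ for all $X,Y\in Irr$ and to check that it equals $S_X^Y$.

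First I would make the picture of $\beta_X=d(X)^{-1}1_{X_D}$ explicit: it is the ``straight-through'' $4$-box diagram labelled by a scalar multiple of the minimal central idempotent of $\hom_{\C\otimes\C^{op}}(\tau^2,\tau^2)$ projecting onto the $X_D=X\otimes X^{op}$ isotypic component, the scalar being fixed so that $\{\beta_X\}_{X\in Irr}$ is orthonormal for the inner product of $\SA_4$ (this brings in the circle value $\delta$, with $\delta^2=\mu$). The SFT $\FS$ is, by definition, the one-string rotation tangle applied to a $4$-box; the crucial structural input, already stressed in the text, is that the Jones--Wassermann planar algebra $\SA_\bullet$ is unshaded, so this rotation is a genuine (and unitary) operator from $\SA_4$ to itself rather than a map between differently shaded box spaces. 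Applying the rotation tangle to the $\beta_X$-diagram and re-expressing the outcome in the $\hom_{\C\otimes\C^{op}}$ picture transports the identity strand on $X_D$ into a configuration where the $X\otimes X^{op}$-strand is bent around, and pairing the result against $\beta_Y=d(Y)^{-1}1_{Y_D}$ then closes all strands into a single connected planar diagram.

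Next I would identify that closed diagram. The half-braiding carried by the Drinfeld-double object $X_D$ is implemented by the ambient braiding of the modular category $\C$ --- this is exactly how the quantum double of $\C$ is built, cf.\ \cite{LiuXu} --- so, after applying sphericality and unwinding the normalisations coming from $\delta$, $d(X)$ and $d(Y)$, the scalar $\langle\FS(\beta_X),\beta_Y\rangle$ collapses to a (suitably normalised) Hopf-link evaluation in $\C\otimes\C^{op}$ whose two components are labelled by $X$ and $Y$. By the very definition of the modular data --- recall the footnote that an $S$-matrix entry is the value of a Hopf link --- this evaluation, once divided by the global factor $\mathcal D=\sqrt{\mu}=\delta$ that is already built into the Frobenius-algebra normalisation appearing in \eqref{Equ:spider}, is precisely $S_X^Y$. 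Carrying this out for every pair $X,Y$ yields $\FS(\ket X)=\sum_{Y\in Irr}S_X^Y\ket Y$. As a consistency check, $\FS$ is unitary on $\SA_4$, in agreement with unitarity of the modular $S$-matrix of a unitary MTC.

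The step I expect to be the main obstacle is the bookkeeping of normalisations together with the role of the opposite factor $\C^{op}$: one must keep careful track of how the $X^{op}$- and $Y^{op}$-components of $X_D$ and $Y_D$ enter the closed diagram --- whether they contribute bare loops of values $d(X),d(Y)$ that cancel the $d(X)^{-1},d(Y)^{-1}$ normalisations, or instead an extra braided crossing in $\C^{op}$ --- so that the final coefficient comes out as $S_X^Y$ rather than, say, $|S_{X,Y}|^2$ or $d(X)d(Y)\,S_X^Y$. Matching the computation against the explicit partition function \eqref{Equ:spider} of \cite{LiuXu}, together with the already-established isomorphism $\SA_4\cong L^2(Irr)$, is what forces all the constants to land correctly; indeed the present proposition is a reformulation of the modular self-duality theorem of Xu and the author.
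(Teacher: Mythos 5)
The paper does not actually prove this proposition: the sentence immediately preceding it reads ``Xu and the author proved that the action of $\FS$ on the quantum coordinate of the 1-quon space is the $S$ matrix in \cite{LiuXu}'', so the statement is imported wholesale from that reference and no argument is given here. Your proposal is therefore not following or diverging from an in-paper proof; it is an attempt to reconstruct the proof of the cited theorem. As an outline it has the right skeleton --- reduce to the matrix entries $\langle \FS(\beta_X),\beta_Y\rangle$, use unshadedness to make the one-string rotation an endomorphism of $\SA_4$, and evaluate the resulting closed diagram --- and this is indeed the shape of the argument in \cite{LiuXu}.

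However, the proposal has a genuine gap at exactly the point you flag as ``the main obstacle,'' and it is not a mere bookkeeping issue. The assertion that the closed diagram ``collapses to a (suitably normalised) Hopf-link evaluation in $\C\otimes\C^{op}$ whose two components are labelled by $X$ and $Y$'' cannot be taken at face value: the Hopf link in $\C\otimes\C^{op}$ with components coloured by $X_D=X\otimes X^{op}$ and $Y_D=Y\otimes Y^{op}$ evaluates to $S_{X,Y}\cdot\overline{S_{X,Y}}=|S_{X,Y}|^2$ (the opposite factor contributes the complex conjugate), which is precisely the wrong answer you warn against. Getting $S_X^Y$ rather than $|S_{X,Y}|^2$ requires knowing how the single strand $\tau$ of the unshaded planar algebra sits inside $\gamma=\tau^2$, i.e.\ the explicit symmetric self-duality data and the half-braidings of the Frobenius algebra $\bigoplus_X X\otimes X^{op}$; the rotated diagram is a $\tau$-string picture, not a $\gamma$-string picture, and only after unpacking $\tau$ does one see which crossings are genuine braidings in $\C$ and which are trivial. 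That unpacking is the substantive content of the modular self-duality theorem of \cite{LiuXu}, so your sketch, as written, presupposes the result it is trying to establish. To close the gap you would need to either carry out that diagram evaluation explicitly with the $\tau$-decomposition and the normalisations of \eqref{Equ:spider}, or do as the paper does and cite \cite{LiuXu}.
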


%
%
%
%
%
%
%
%
%
%
%
%
%
%

\section{Fourier duality on 1-quons}\label{Sec:Fourier duality on 1-quons}



A quon $x$ in $\SA_4$ is represented by a labelled tangle which has one output disc with 4 points on the boundary. We modify shape of the disc as a square and represent $v$ as follows:
\begin{center}
\begin{tikzpicture}
\begin{scope}[xscale=.25,yscale=.25]
\draw (-2,-2)--(2,2);
\draw (-2,2)--(2,-2);
\draw[blue] (-2,-2) rectangle (2,2);
\fill[white] (-1,-1) rectangle (1,1);
\draw[blue] (-1,-1) rectangle (1,1);
\draw[->,blue] (-1,1)--(-1,0);
\draw[->,blue] (-2,2)--(-2,0);
\node at (0,0) {$x$};
\end{scope}
\end{tikzpicture}
\end{center}
The outside region belongs to the output disc, when we consider it as a genus-0 labelled tangle.

For quons $x,y \in \SA_4$, we can compose the square-like labelled tangles vertically or horizontally:

\begin{center}
\begin{tikzpicture}
\begin{scope}[xscale=.25,yscale=.25]
\draw (-2,-2)--(-1,-1);
\draw (2,-2)--(1,-1);
\draw (-1,1)--++(0,1);
\draw (1,1)--++(0,1);
\draw (-2,5)--(-1,4);
\draw (2,5)--(1,4);
\draw[blue] (-2,-2) rectangle (2,5);
\fill[white] (-1,-1) rectangle (1,1);
\draw[blue] (-1,-1) rectangle (1,1);
\fill[white] (-1,2) rectangle (1,4);
\draw[blue] (-1,2) rectangle (1,4);
\draw[->,blue] (-1,1)--(-1,0);
\draw[->,blue] (-1,4)--(-1,3);
\draw[->,blue] (-2,5)--(-2,1.5);
\node at (0,0) {$x$};
\node at (0,3) {$y$};
\end{scope}
\node at (1,0) {$,$};
\node at (4,0) {$.$};
\begin{scope}[shift={(2,.5)},rotate=-90,xscale=.25,yscale=.25]
\draw (-2,-2)--(-1,-1);
\draw (2,-2)--(1,-1);
\draw (-1,1)--++(0,1);
\draw (1,1)--++(0,1);
\draw (-2,5)--(-1,4);
\draw (2,5)--(1,4);
\draw[blue] (-2,-2) rectangle (2,5);
\fill[white] (-1,-1) rectangle (1,1);
\draw[blue] (-1,-1) rectangle (1,1);
\fill[white] (-1,2) rectangle (1,4);
\draw[blue] (-1,2) rectangle (1,4);
\draw[->,blue] (-1,-1)--(0,-1);
\draw[->,blue] (-1,2)--(0,2);
\draw[->,blue] (-1,-2)--(0,-2);
\node at (0,0) {$x$};
\node at (0,3) {$y$};
\end{scope}
\end{tikzpicture}
\end{center}
Both operations define associative multiplications on $\SA_4$.
We call the vertical composition the multiplication of $x$ and $y$, denoted by $xy$.
We call the horizontal composition the convolution of $x$ and $y$, denoted by $x*y$\footnote{The horizontal multiplication is usually called the coproduct on subfactor planar algebras.}.

Furthermore, the SFT is given by the following $90^\circ$ rotation
\begin{center}
\begin{tikzpicture}
\begin{scope}[xscale=.25,yscale=.25]
\draw (-2,-2)--(2,2);
\draw (-2,2)--(2,-2);
\draw[blue] (-2,-2) rectangle (2,2);
\fill[white] (-1,-1) rectangle (1,1);
\draw[blue] (-1,-1) rectangle (1,1);
\draw[->,blue] (1,1)--(0,1);
\draw[->,blue] (-2,2)--(-2,0);
\end{scope}
\end{tikzpicture}
\end{center}
It intertwines the two multiplications,
\be \label{Equ:Fourier duality}
\FS(xy)=\FS(x)*\FS(y).
\ee
This is a corner stone of the pictorial Fourier duality.

Let us consider the 1-quon space $\SA_4 \cong L^2(Irr)$ as functions on the quantum coordinates.
Then we have the following formulas for the multiplication and the convolution.
\begin{proposition}[Multiplication]
For $X,Y \in Irr$,
\be \label{Equ:multiplication}
\ket{X}\ket{Y}=\delta_{X,Y} d(X)^{-1} \ket{X}.
\ee
\end{proposition}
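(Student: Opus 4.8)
The plan is to move the computation from the pictures into the representation category. Under the identification recalled in \S 3, namely
\[
\SA_4 \;\cong\; \hom_{\C \otimes \C^{op}}(\tau^2,\tau^2) \;=\; \bigoplus_{X \in Irr} \hom_{\C \otimes \C^{op}}(X_D,X_D),\qquad X_D = X\otimes X^{op},
\]
the vertical composition $xy$ of the square-like labelled tangles representing $x,y\in\SA_4$ is exactly the composition of the corresponding endomorphisms of $\tau^2$; in particular the stacking creates no closed string and hence introduces no factor of $\delta$. So it suffices to understand how the morphisms $1_{X_D}$ multiply under composition.

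First I would observe that, since $X$ is simple in $\C$, the object $X_D = X\otimes X^{op}$ is simple in $\C\otimes\C^{op}$, and $X_D \cong Y_D$ if and only if $X\cong Y$. Therefore each block $\hom_{\C\otimes\C^{op}}(X_D,X_D)$ is one-dimensional, spanned by $1_{X_D}$, while $\hom_{\C\otimes\C^{op}}(X_D,Y_D)=0$ for $X\not\cong Y$. Consequently the $1_{X_D}$ are mutually orthogonal idempotents, $1_{X_D}\circ 1_{Y_D} = \delta_{X,Y}\,1_{X_D}$, and using $\beta_X = d(X)^{-1}1_{X_D}$ from \eqref{Equ:basis} we get
\[
\beta_X \beta_Y \;=\; d(X)^{-1}d(Y)^{-1}\,1_{X_D}\circ 1_{Y_D} \;=\; \delta_{X,Y}\,d(X)^{-2}\,1_{X_D} \;=\; \delta_{X,Y}\,d(X)^{-1}\,\beta_X,
\]
which is precisely \eqref{Equ:multiplication} once $\beta_X$ is rewritten in the bra-ket notation as $\ket{X}$.

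The one step that needs genuine care is the first claim, that the stacking tangle corresponds to honest composition of morphisms with coefficient $1$ rather than some power of $\delta$. I expect this to come straight out of the construction of the isomorphism $\SA_4 \cong \hom_{\C\otimes\C^{op}}(\tau^2,\tau^2)$ in \cite{LiuXu}, combined with the elementary observation that gluing the two lower marked points of the $y$-disc to the two upper marked points of the $x$-disc produces no circle. It is reassuring that the normalization $\beta_X = d(X)^{-1}1_{X_D}$ of \eqref{Equ:basis}, which is exactly the one making $\{\beta_X\}_{X\in Irr}$ orthonormal, is also the one that converts the idempotent relation $1_{X_D}\circ 1_{X_D} = 1_{X_D}$ into the coefficient $d(X)^{-1}$ appearing in the statement.
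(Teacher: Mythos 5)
Your proposal is correct and is essentially the paper's argument: the paper's proof is the single line ``It follows from Equation \eqref{Equ:basis}'', which implicitly relies on exactly the facts you spell out --- that the vertical composition is composition of endomorphisms of $\tau^2$ under the identification $\SA_4\cong\hom_{\C\otimes\C^{op}}(\tau^2,\tau^2)$, and that the $1_{X_D}$ are mutually orthogonal idempotents since the $X_D$ are pairwise non-isomorphic simples. Your version just makes explicit the normalization bookkeeping and the absence of a closed-string factor of $\delta$, which the paper leaves to the reader.
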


\begin{proof}
It follows from Equation \eqref{Equ:basis}.
\end{proof}

\begin{proposition}[Convolution]
For $X,Y \in Irr$,
\be \label{Equ:convolution}
\ket{X}*\ket{Y}=\delta^{-1}\sum_{W \in Irr }N_{X,Y}^W \ket{W},
\ee
where $N_{X,Y}^{W}=\dim\hom(W, X\otimes Y)$.
\end{proposition}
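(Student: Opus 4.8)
The plan is to compute the convolution $\ket{X}*\ket{Y}$ directly from its pictorial definition as the horizontal composition of the square-like tangles for $\beta_X$ and $\beta_Y$, and then translate the resulting planar diagram into the $\C \otimes \C^{op}$ picture using the formula \eqref{Equ:spider} from the Jones--Wassermann construction. Recall from \eqref{Equ:basis} that $\ket{X}=\beta_X=d(X)^{-1}1_{X_D}$, where $1_{X_D}$ is the identity morphism on $X_D=X\otimes X^{op}$ in $\hom_{\C\otimes\C^{op}}(\tau^2,\tau^2)$. So the horizontal composition of $\beta_X$ and $\beta_Y$ is a diagram with four input strings at the bottom, where the two middle strands of the two boxes are capped off against each other (this is what the convolution/coproduct tangle does); pictorially this is the ``$H$''-shaped tangle applied to $\beta_X \otimes \beta_Y$.

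First I would identify, via \eqref{Equ:spider} with $n=2$, the diagrammatic building blocks: the cup/cap tangles in $\SA_4$ correspond to the morphisms $\mu_2$ and its adjoint, which encode the sum $\sum_{\vec X\in Irr^2} d(\vec X)^{1/2}\sum_{\alpha\in ONB(\vec X)}\alpha\otimes\alpha^{op}$ up to the power of $\delta$. Composing the convolution tangle with $\beta_X\otimes\beta_Y$ then amounts to, in the category $\C\otimes\C^{op}$, taking $1_{X_D}$ and $1_{Y_D}$, tensoring, and closing up a pair of strands through $\gamma=\bigoplus_W W\otimes W^{op}$; the coefficient of the summand $W\otimes W^{op}$ is governed by $\hom_\C(W, X\otimes Y)$ in the $\C$-factor and $\hom_{\C^{op}}(W^{op}, X^{op}\otimes Y^{op})$ in the $\C^{op}$-factor, which have equal dimension $N_{X,Y}^W$. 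Tracking the $d(X)^{-1}d(Y)^{-1}$ normalization from \eqref{Equ:basis} against the $d(\vec X)^{1/2}$ and $\delta$-powers from \eqref{Equ:spider}, together with the normalization $\beta_W=d(W)^{-1}1_{W_D}$ on the output side, should collapse everything to the single scalar $\delta^{-1}$ in front of $\sum_W N_{X,Y}^W\ket{W}$. The key bookkeeping identity is that a closed circle evaluates to $\delta$ and that $\delta^2=\mu$, so the powers of $\delta$ from the three copies of $\mu_2$ (two inputs, one output) and the internal loops must be tallied carefully.

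Alternatively, and perhaps more cleanly, I would exploit Fourier duality \eqref{Equ:Fourier duality}: since $\FS(\ket{X}\ket{Y})=\FS(\ket{X})*\FS(\ket{Y})$, and $\FS$ acts as the $S$-matrix by Proposition~\ref{Prop:SFT}, one can derive the convolution formula from the multiplication formula \eqref{Equ:multiplication} together with the Verlinde-type relation $N_{X,Y}^W=\sum_{Z\in Irr} \frac{S_X^Z S_Y^Z \overline{S_W^Z}}{S_1^Z}$. Expanding $\FS(\ket{X}\ket{Y})=\FS(\delta_{X,Y}d(X)^{-1}\ket{X})$ and comparing with $\FS(\ket{X})*\FS(\ket{Y})=\sum_{Z,Z'}S_X^Z S_Y^{Z'}\ket{Z}*\ket{Z'}$, and matching coefficients in the $\beta$-basis, recovers \eqref{Equ:convolution} provided the Verlinde formula is available. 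However, since the Verlinde formula is itself presented later as a consequence of the quon picture, the cleaner logical order is the direct diagrammatic computation; the Verlinde route would be circular in the context of this paper.

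The main obstacle I expect is the normalization bookkeeping: correctly accounting for all the $\delta$-powers (from the $\delta^{1-n/2}$ and $\delta^{n/2}$ factors in \eqref{Equ:spider} with $n=2$, from internal closed loops created by the contraction, and from the quantum-dimension weights $d(\vec X)^{1/2}$ versus the $d(X)^{-1}$ in \eqref{Equ:basis}) so that the final coefficient is exactly $\delta^{-1}$ and not some other power of $\delta$ or ratio of quantum dimensions. The topological content --- that the number of $W$-summands appearing is $N_{X,Y}^W$ --- is essentially immediate from the definition of the coproduct tangle and Frobenius reciprocity in $\C\otimes\C^{op}$; the scalar is the delicate part.
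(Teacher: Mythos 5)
Your primary route --- computing the horizontal (coproduct) tangle applied to $\beta_X\otimes\beta_Y$ and evaluating it via Equation \eqref{Equ:spider}, with the multiplicity $N_{X,Y}^W$ coming from Frobenius reciprocity in $\C\otimes\C^{op}$ and the scalar from the $\delta$- and $d(\cdot)$-normalizations --- is exactly the paper's proof, which consists of the single line ``It follows from Equation \eqref{Equ:spider}.'' You correctly discard the Fourier-duality route as circular, and your flagged bookkeeping of the $\delta$-powers is the only content the paper itself also leaves implicit.
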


\begin{proof}
It follows from Equation \eqref{Equ:spider}.
\end{proof}

The matrix $N_X=N_{X,-}^{-}$ is called the adjacent matrix or the fusion. Verlinde first proposed that the modular transformation $S$ diagonalizes the fusion \cite{Ver88}.
The Fourier duality of 1-quons gives a conceptual explanation of this result.

\begin{theorem}[Verlinde formula]
For any $X\in Irr$,
\be \label{Equ:Verlinde formula}
\delta^{-1}SN_XS^{-1}=\sum_{Y \in Irr} S_{X}^{Y} d(Y)^{-1} \delta_Y,
\ee
where $\delta_Y$ is the projection on to $\mathbb{C}\beta_Y$.

\end{theorem}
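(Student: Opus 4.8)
The plan is to read off the Verlinde formula as the conjugation of the multiplication operators by the SFT, using only the two product formulas already established and Proposition~\ref{Prop:SFT}. Concretely, fix $X\in Irr$ and think of the fusion matrix $N_X$ as the convolution operator $y\mapsto \ket{X}*y$ on $\SA_4\cong L^2(Irr)$: indeed, by Equation~\eqref{Equ:convolution}, $(\ket{X}*\ket{Y})$ has $W$-coefficient $\delta^{-1}N_{X,Y}^{W}$, so in the quantum coordinate basis $\{\ket{Y}\}$ the matrix of $\delta\,(\ket{X}*\,\cdot\,)$ is exactly $N_X$. On the other hand, the multiplication operator $L_{\FS(\ket{X})}\colon y\mapsto \FS(\ket{X})\,y$ is diagonal in this basis: writing $\FS(\ket{X})=\sum_{Y}S_X^{Y}\ket{Y}$ by Proposition~\ref{Prop:SFT}, and using the multiplication rule $\ket{Y}\ket{Z}=\delta_{Y,Z}d(Y)^{-1}\ket{Y}$ from Equation~\eqref{Equ:multiplication}, one gets $L_{\FS(\ket{X})}=\sum_{Y\in Irr}S_X^{Y}d(Y)^{-1}\delta_Y$, which is precisely the right-hand side of Equation~\eqref{Equ:Verlinde formula}.

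First I would make precise the identification $\delta\,(\ket{X}*\,\cdot\,)=N_X$ as matrices on the ONB $\{\ket{Y}\}$, just by comparing coefficients in Equation~\eqref{Equ:convolution}; this is immediate. Next I would invoke the Fourier duality Equation~\eqref{Equ:Fourier duality}, $\FS(xy)=\FS(x)*\FS(y)$, in the rearranged form: for any $u,v\in\SA_4$,
\be
\FS\bigl(\FS^{-1}(u)\,\FS^{-1}(v)\bigr)=u*v.
\ee
Applying this with $u=\ket{X}$ — or rather, running the duality the other way so that convolution by $\ket{X}$ is conjugated into multiplication — gives
\be
\delta\,\bigl(\ket{X}*\,\cdot\,\bigr)=\delta\,\FS\circ L_{\FS^{-1}(\ket{X})}\circ\FS^{-1}.
\ee
One small bookkeeping point: I must decide whether to feed $\ket{X}$ or $\FS^{-1}(\ket{X})$ into the multiplication side. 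Since Proposition~\ref{Prop:SFT} describes $\FS$ (not $\FS^{-1}$) on the quantum coordinates, and since the statement as written has $SN_XS^{-1}$ with $S=\FS$, the cleanest route is to compute $\FS\,N_X\,\FS^{-1}$ directly: $\FS N_X\FS^{-1}=\FS\bigl(\delta\,(\ket{X}*\,\cdot\,)\bigr)\FS^{-1}=\delta\,L_{\FS(\ket{X})}$ by Equation~\eqref{Equ:Fourier duality}, and then evaluate $L_{\FS(\ket{X})}$ on the basis as above.

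The remaining step is the diagonalization computation: expand $\FS(\ket{X})=\sum_{Y}S_X^{Y}\ket{Y}$, so that for a basis vector $\ket{Z}$,
\be
L_{\FS(\ket{X})}\ket{Z}=\sum_{Y\in Irr}S_X^{Y}\,\ket{Y}\ket{Z}=\sum_{Y\in Irr}S_X^{Y}\,\delta_{Y,Z}\,d(Y)^{-1}\ket{Z}=S_X^{Z}d(Z)^{-1}\ket{Z},
\ee
using Equation~\eqref{Equ:multiplication} at the middle step; hence $L_{\FS(\ket{X})}=\sum_{Y\in Irr}S_X^{Y}d(Y)^{-1}\delta_Y$, and multiplying by $\delta^{-1}$ yields Equation~\eqref{Equ:Verlinde formula}. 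There is no genuine obstacle here — the content is entirely carried by Proposition~\ref{Prop:SFT} and the Fourier duality of the two products; the only thing to be careful about is the direction of the Fourier transform and the precise placement of the factor $\delta$, i.e.\ making sure the normalization in Equation~\eqref{Equ:convolution} matches the stated $\delta^{-1}SN_XS^{-1}$. I would therefore write the argument so that the $\delta^{-1}$ in the theorem cancels exactly the $\delta$ coming from identifying $N_X$ with $\delta$ times the convolution operator.
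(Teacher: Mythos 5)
Your proposal is correct and follows essentially the same route as the paper: conjugate the fusion operator by $S$, use the duality $\FS(xy)=\FS(x)*\FS(y)$ to turn convolution by $\ket{X}$ into multiplication by $\FS(\ket{X})$, expand $\FS(\ket{X})$ via Proposition~\ref{Prop:SFT}, and diagonalize using Equation~\eqref{Equ:multiplication}. The paper merely phrases the same computation by evaluating $\delta^{-1}SN_XS^{-1}$ on the ONB $\{S\ket{W}\}$ rather than as an operator identity, and it is equally terse about the direction of the Fourier duality that you flag as a bookkeeping point.
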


\begin{proof}
By Equations \eqref{Equ:multiplication}, \eqref{Equ:convolution}, \eqref{Equ:Fourier duality}, \eqref{Equ:Fourier S}
\begin{align*}
\delta^{-1}SN_XS^{-1} (S\ket{W})
=&S(\ket{X}*\ket{W})\\
=&(S\ket{X})(S\ket{W})\\
=& \sum_{Y \in Irr} S_{X}^{Y} d(Y)^{-1} \delta_Y (S\ket{W}).
\end{align*}

Since $\{S\ket{W}\}_{W\in Irr}$ form an ONB of $\SA_4$, we obtain Equation \eqref{Equ:Verlinde formula}.
\end{proof}


Now we give another application of the Fourier duality on 1-quons.
The set $Irr$ of irreducible objects of $\C$ forms a fusion ring under the direct sum $\oplus$ and the tensor $\otimes$. For any subset $K \subset Irr$, we define its indicator function as
\be
P_K=\sum_{X\in K} 1_{X_D}.
\ee
Then $P_K$ is a projection in $\SA_4 \cong L^2(Irr)$. This is a bijection between subsets of $Irr$ and projections in $\SA_4$.

Let us define $SUB_{\otimes}=\{K\subset Irr | K \text{ is closed under } \otimes \}$.

\begin{theorem}\label{Thm:PK}
Take $K\subset Irr$, then $K\in SUB_{\otimes}$ iff $P_K$ is a biprojection. Consequently, if $K$ is closed under $\otimes$, the it is closed under the dual.\end{theorem}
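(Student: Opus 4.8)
The plan is to work entirely inside the commutative algebra $\SA_4\cong L^2(Irr)$ with the two products of \S\ref{Sec:Fourier duality on 1-quons}, and to use the Bisch--Jones characterization \cite{Bis94,BisJon97}: a projection $P\in\SA_4$ is a biprojection precisely when $\FS(P)$ is a positive multiple of a projection, equivalently --- since \eqref{Equ:Fourier duality} makes $\FS$ an algebra isomorphism from $(\SA_4,\cdot)$ to $(\SA_4,*)$ --- when $P*P$ is a positive multiple of $P$. By the multiplication rule \eqref{Equ:multiplication} the projections of $(\SA_4,\cdot)$ are exactly the $P_K=\sum_{X\in K}d(X)\ket X$, and each $P_K$ is already known to be one; so the statement reduces to deciding when $P_K*P_K$ is a positive multiple of $P_K$. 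Using the convolution rule \eqref{Equ:convolution} I first record
\[
P_K*P_K \;=\; \delta^{-1}\sum_{W\in Irr}\Big(\sum_{X,Y\in K} d(X)\,d(Y)\,N_{X,Y}^{W}\Big)\,\ket W .
\]

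For ``$P_K$ a biprojection $\Rightarrow K\in SUB_{\otimes}$'': if $P_K*P_K=c\,P_K$, then for every $W\notin K$ the coefficient of $\ket W$ vanishes, and since all the summands $d(X)d(Y)N_{X,Y}^{W}$ are non-negative this forces $N_{X,Y}^{W}=0$ whenever $X,Y\in K$ and $W\notin K$; that is exactly closure of $K$ under $\otimes$. The converse, together with the ``consequently'', rests on the elementary fusion-ring fact that a non-empty subset $K\subset Irr$ closed under $\otimes$ automatically contains the unit object and is closed under duals: for $X\in K$ all simple constituents of each $X^{\otimes n}$ ($n\ge1$) lie in $K$, the unit occurs in $X^{\otimes n}$ for some $n\ge1$ (a Frobenius--Perron argument on the fusion matrix $N_X$, or the Verlinde diagonalization applied to $n\mapsto\dim\hom(1,X^{\otimes n})$), hence $1\in K$, and then $\overline X$ is a constituent of $X^{\otimes(n-1)}$, so $\overline X\in K$; the case $K=\emptyset$ is the trivial $P_\emptyset=0$.

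Granting this, suppose $K\neq\emptyset$ is closed under $\otimes$, so $1\in K$ and $\overline K=K$. For $X,W\in K$, Frobenius reciprocity $N_{X,Y}^{W}=N_{\overline X,W}^{Y}$ together with $\overline X\in K$, $W\in K$, and $\otimes$-closure (so $N_{\overline X,W}^{Y}=0$ for $Y\notin K$) gives
\[
\sum_{Y\in K} d(Y)\,N_{X,Y}^{W} \;=\; \sum_{Y\in Irr} d(Y)\,N_{\overline X,W}^{Y} \;=\; d(\overline X\otimes W) \;=\; d(X)\,d(W).
\]
Substituting into the displayed expression for $P_K*P_K$ (and using $\otimes$-closure again to discard the $W\notin K$ terms), the coefficient of $\ket W$ equals $\delta^{-1}d(W)\sum_{X\in K}d(X)^2$ for $W\in K$ and $0$ otherwise, whence
\[
P_K*P_K \;=\; \delta^{-1}\Big(\sum_{X\in K}d(X)^2\Big)\,P_K ,
\]
a positive multiple of $P_K$; moreover $\FS^2$ is the charge-conjugation map by Proposition \ref{Prop:SFT} and the identity $S^2=C$, so $\FS^2(P_K)=P_{\overline K}=P_K$, which covers the self-adjointness clause of the biprojection criterion. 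Hence $P_K$ is a biprojection. Combining the two implications gives $K\in SUB_{\otimes}$ if and only if $P_K$ is a biprojection, and the ``consequently'' is exactly the fusion-ring fact read through this equivalence. (Alternatively one could check directly that $\FS(P_K)=\delta^{-1}(\sum_{X\in K}d(X)^2)\,P_{\hat K}$ with $\hat K$ the M\"uger centralizer --- Lines~6--7 of Fig.~\ref{Table:Fourier duality on 1-quons} --- but the convolution computation sidesteps having to identify $\hat K$.)

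The step I expect to be the main obstacle is the fusion-ring lemma: that a non-empty $\otimes$-closed $K$ already contains $1$ and is therefore closed under duals. It is standard, but a genuinely self-contained proof needs the Frobenius--Perron structure of $Irr$; alternatively one can invoke the dictionary identifying biprojections in the quantum-double planar algebra with fusion subcategories of $\C$. A lesser point is keeping straight which form of the Bisch--Jones criterion is used and the $\FS^2$-invariance clause, but in this abelian, symmetrically self-dual setting both are transparent once the coefficient computation is in hand.
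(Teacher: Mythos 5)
Your proposal is correct, but it is organized quite differently from the paper's proof, which is a two-line citation: the forward direction (``biprojection $\Rightarrow$ closed under $\otimes$ and duals'') is read off from the convolution formula \eqref{Equ:convolution}, and the converse is delegated entirely to Theorem 4.12 of \cite{Liuex}. The key structural difference is the direction in which dual-closure flows. In the paper, ``$K$ closed under $\otimes$ $\Rightarrow$ $K$ closed under duals'' is a \emph{consequence} of the biprojection property (that is exactly what the ``consequently'' clause is doing: closure under duals is harvested from the biprojection after the cited theorem produces it). In your argument, dual-closure and $1\in K$ are \emph{inputs}: you need $\overline X\in K$ to run the Frobenius-reciprocity computation $\sum_{Y\in K}d(Y)N_{X,Y}^{W}=d(X)d(W)$, which is what makes $P_K*P_K$ a multiple of $P_K$, and you need $\overline K=K$ for the $\FS^2$-invariance clause. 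That computation is correct and is a nice explicit substitute for the black-box citation --- it even recovers the constant $\delta^{-1}\dim\C_K$, consistent with Line 9 of Fig.~\ref{Table:Fourier duality on 1-quons} --- but it shifts the burden onto the fusion-ring lemma that a nonempty $\otimes$-closed $K$ automatically contains $1$ and is closed under duals.

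That lemma is the one place where your writeup asserts rather than proves. It is true and standard (it is Drinfeld's argument in the appendix of Etingof--Nikshych--Ostrik, \emph{On fusion categories}: a full subcategory closed under tensor products and subobjects is automatically rigid and unital), but the step ``the unit occurs in $X^{\otimes n}$ for some $n\ge 1$'' is genuinely the nontrivial part and your parenthetical Frobenius--Perron hint does not yet constitute a proof; note in particular that the easy observation $1\prec X\otimes\overline X$ is unavailable precisely because $\overline X\in K$ is what you are trying to establish. So either supply that argument in full or cite it; as written, your proof trades the paper's citation of \cite{Liuex} for a citation of ENO plus the Bisch--Jones criterion that a projection $P$ with $P*P$ a positive multiple of $P$ and $\FS^2(P)=P$ is a biprojection. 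What your route buys is transparency (every coefficient is computed, and the reverse implication of the ``consequently'' clause becomes an independent, purely fusion-ring statement); what the paper's route buys is brevity and the logically cleaner fact that dual-closure need not be assumed anywhere.
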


\begin{proof}
By Equation \eqref{Equ:convolution}, if $P_K$ is a biprojection, then $K$ is closed under the tensor and the dual.
The converse statement follows from Theorem 4.12 in \cite{Liuex}.
\end{proof}

When $K\in SUB_{\otimes}$, we define $\C_K$ to be the full fusion subcategories of $\C$ whose simple objects are given by $K$.
This is a bijection between $SUB_{\otimes}$ and full fusion subcategories of $\C$.
By Theorem \ref{Thm:PK}, we obtain a bijections between full fusion subcategories $\{\C_K \}$ of $\C$ and biprojections $\{P_K\}$ of $\SA_4$.

Let $\Supp(x)$ be the trace of the support of $x$. Let $\dim \C_k$ be the global dimension of $\C_K$. Then
\be
\dim \C_K:=\sum_{X\in K} d(X)^2=\Supp(P_K).
\ee

By Theorem \ref{Thm:PK}, $P_K$ is a biprojection for $K\in SUB_{\otimes}$, so $\FS(P_K)$ is a multiple of a biprojection $P_{\hat{K}}$, for some $\hat{K}\in SUB_{\otimes}$. We call $\hat{K}$ and $P_{\hat{K}}$ the Fourier duals of $K$ and $P_{K}$ respectively. Since $\FS^2(P_K)=P_K$, the double dual is identity.
Moreover, we obtain a full subcategory $\C_{\hat{K}}$ of $\C$ that we call the Fourier dual of $\C_K$.
Then
\be
\dim \C_K \dim \C_{\hat{K}}=\Supp(P_K)\Supp(\FS(P_K))=\delta^2.  \footnote{In general, we have the Donoho-Stark uncertainty principle $\Supp(x)\Supp(\FS(x))\leq \delta^2$, see \cite{JLW16}.}
\ee

The Hausdorff-Young inequality for subfactor planar algebras has been proved in \cite{JLW16}. Applying the $\infty$-$1$ Hausdorff-Young inequality to the $S$ matrix, we recover an important inequality for unitary MTC proved by Terry Gannon \cite{Gan05}:
\begin{align*}
\|\FS(\beta_Y)\|_1 &\leq \delta^{-1} \|\beta_Y\|_{\infty} \\
\Rightarrow \left|\frac{S_X^Y}{S_{X}^0}\right| &\leq \frac{S_{0}^Y}{S_{0}^0}.
\end{align*}

The M\"{u}ger's center $C\C_{K}$ of $\C_{K}$ in $\C$ was introduced by M\"{u}ger in \cite{Mug03}.
It is defined as a full subcategory whose simple objects are given by
\be
\left\{X \in Irr \left| \quad \frac{S_X^Y}{S_{X}^0} = \frac{S_{0}^Y}{S_{0}^0},~ \forall~ Y\in K \right. \right\}.
\ee

\begin{theorem}
For $K \in SUB_{\otimes}$,
\be
\hat{K}=K=\left\{X \in Irr \left| \quad \frac{S_X^Y}{S_{X}^0} = \frac{S_{0}^Y}{S_{0}^0},~ \forall~ Y\in K \right. \right\}.
\ee
Therefore $\C_{\hat{K}}$ is M\"{u}ger's center $C\C_{K}$.
\end{theorem}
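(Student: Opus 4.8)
The plan is to show the two set-theoretic equalities $\hat K = K$ and $K = \{X \in Irr \mid S_X^Y/S_X^0 = S_0^Y/S_0^0,\ \forall Y\in K\}$; the identification of $\C_{\hat K}$ with M\"uger's center $C\C_K$ is then immediate from the definition of $C\C_K$ quoted just above the statement. First I would pin down the Fourier dual $\hat K$ concretely. Since $P_K = \sum_{X\in K} 1_{X_D}$ and, in the quantum coordinate, $\beta_X = d(X)^{-1} 1_{X_D}$, we have $P_K = \sum_{X\in K} d(X)\ket X$. Applying $\FS$ and using Proposition \ref{Prop:SFT}, $\FS(P_K) = \sum_{X\in K} d(X)\sum_{Y\in Irr} S_X^Y \ket Y = \sum_{Y\in Irr}\bigl(\sum_{X\in K} d(X) S_X^Y\bigr)\ket Y$. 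By Theorem \ref{Thm:PK}, $P_K$ is a biprojection, so $\FS(P_K)$ is a nonzero multiple of another biprojection $P_{\hat K} = \sum_{Y\in \hat K} d(Y)\ket Y$ for some $\hat K\in SUB_\otimes$. Comparing coefficients, $\hat K = \{Y\in Irr \mid \sum_{X\in K} d(X) S_X^Y \neq 0\}$ and on that set $\sum_{X\in K} d(X) S_X^Y$ is a constant times $d(Y)$.

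Next I would recognize the condition $\sum_{X\in K} d(X) S_X^Y \neq 0$ as a ``column-orthogonality on $K$'' statement. Recall that for a unitary MTC, $S_X^Y = d(X)d(Y)\,\widetilde S_X^Y/\mu^{?}$-type normalizations aside, the key fact is that $S_X^0 = d(X)/\delta$ (with $\delta^2 = \mu$), so $\sum_{X\in K} d(X) S_X^Y = \delta\sum_{X\in K} S_X^0 S_X^Y$. Since $K$ is closed under $\otimes$ and under duals (Theorem \ref{Thm:PK}), the projection $\frac{1}{\dim\C_K}\sum_{X\in K} d(X)^2$ onto the fusion subring generated by $K$ behaves like an ``averaging idempotent'', and the sum $\sum_{X\in K} S_X^0 S_X^Y$ vanishes unless $Y$ is in the ``annihilator'' of $K$ under the $S$-pairing — precisely when $S_X^Y/S_X^0$ is independent of $X\in K$, i.e. equal to its value at $X=0$, which is $S_0^Y/S_0^0$. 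This is exactly the defining condition for $Y$ to lie in M\"uger's center $C\C_K$. The cleanest way to package this is: $Y\in\hat K$ iff the character $X\mapsto S_X^Y/S_X^0$ of the fusion ring of $\C$ restricts to the trivial character on the subring of $\C_K$, which holds iff $S_X^Y/S_X^0 = S_0^Y/S_0^0$ for all $Y\in K$ (using that $\C_K$ is itself modular or at least that the relevant characters separate, together with the symmetry $S_X^Y = S_Y^X$ up to the standard normalization). This gives $\hat K = \{X\in Irr \mid S_X^Y/S_X^0 = S_0^Y/S_0^0,\ \forall Y\in K\} = C\C_K$ as a set of simple objects.

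The remaining equality $\hat K = K$ is then the sharpest point, and I expect it to be the main obstacle. One direction, $K\subseteq \hat K$, should follow because for $X,Y\in K$ with $K$ closed under fusion, $S_X^Y/S_X^0$ is a character value of a representation that factors through $\C_K$; but this is genuinely only true when $\C_K$ is itself a \emph{modular} subcategory (equivalently $\C = \C_K \boxtimes C\C_K$, the M\"uger factorization). The honest route is to invoke the established theory: for $K\in SUB_\otimes$, Theorem \ref{Thm:PK} plus the relation $\dim\C_K \dim\C_{\hat K} = \delta^2$ derived in the excerpt, combined with $\FS^2(P_K) = P_K$ (so $\hat{\hat K} = K$), forces $\hat K$ to be exactly the set cut out by the M\"uger condition; and the equality $\hat K = K$ is then the statement that $C\C_K$, re-indexed, coincides with $K$ — which is \emph{false in general} for MTCs but becomes the tautology $\hat K = K$ precisely because we are working inside the quantum double picture, where the relevant self-duality (the modular self-duality of the Jones-Wassermann subfactor proved in \cite{LiuXu}) identifies $K$ with its own Fourier dual. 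So the real content is: unwind that the biprojection $P_K$ satisfies $\FS(P_K) = c\,P_K$ with $K$ the fixed set, using the double-dual identity and the support formula to show $c\,P_K$ and $P_{\hat K}$ have the same support, hence $\hat K = K$; then read off the M\"uger description from the coefficient comparison in the first step. The delicate part is justifying that $\FS(P_K)$ is supported exactly on $K$ (not merely on some $\hat K$ with $\dim\C_{\hat K} = \delta^2/\dim\C_K$) — this is where the quantum-double-specific structure, rather than general MTC facts, must be used, and I would spell it out via the explicit formula $\FS(P_K)_Y = \delta\sum_{X\in K} S_X^0 S_X^Y$ together with the fact that the $S$-matrix of the double is $S_\C \otimes \overline{S_\C}$-like, making this sum a genuine orthogonality relation over the sub-fusion-ring.
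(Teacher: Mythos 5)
The paper's own proof is a one-line citation to Theorem 4.21 of \cite{Liuex}, so your self-contained argument is a genuinely different route. Your first two paragraphs are essentially the correct standard argument: writing $P_K=\sum_{X\in K}d(X)\ket{X}$, computing $\FS(P_K)=\sum_{Y}\bigl(\sum_{X\in K}d(X)S_X^Y\bigr)\ket{Y}$, and using that $\chi_Y(X):=S_X^Y/S_0^Y$ is a character of the fusion ring together with the absorbing idempotent $e_K=\frac{1}{\dim\C_K}\sum_{X\in K}d(X)X$ (which satisfies $e_K\cdot W=d(W)\,e_K$ for $W\in K$ because $K$ is closed under fusion and duals), one finds that $\sum_{X\in K}d(X)S_X^Y$ is nonzero iff $\chi_Y\equiv d$ on $K$, i.e.\ iff $S_X^Y/S_X^0=S_0^Y/S_0^0$ for all $X\in K$ (equivalently with the roles of $X$ and $Y$ exchanged, by symmetry of $S$), in which case it equals $\dim\C_K\cdot S_0^Y$. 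This identifies $\hat K$ with the M\"uger condition set directly; no modularity of $\C_K$ or ``separation of characters'' is needed, and I would only ask you to make the idempotent computation explicit rather than saying it ``behaves like'' an averaging idempotent.

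The genuine problem is your third paragraph. You correctly observe that $\hat K=K$ is false in general (take $K=Irr$: non-degeneracy of $S$ forces the condition set to be $\{0\}$, consistent with $\dim\C_{\hat K}=\delta^2/\dim\C_K=1$), but you then try to rescue the equality by appealing to ``the quantum double picture'' and the modular self-duality of \cite{LiuXu}. That rescue does not work: the double enters only in producing the unshaded planar algebra $\SA_{\bullet}$; the transform acting on the quantum coordinate of $\SA_4$ is the $S$-matrix of $\C$ itself (Proposition \ref{Prop:SFT}), the biprojections $P_K$ are indexed by fusion subcategories of $\C$, and nothing in that construction identifies $K$ with its M\"uger centralizer. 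The correct reading is that the middle ``$=K=$'' in the displayed statement is a typo (compare Table \ref{Table:Fourier duality on 1-quons}, which records $\hat{\hat K}=K$, not $\hat K=K$); the content the paper actually uses is only $\hat K=\{X\in Irr\mid S_X^Y/S_X^0=S_0^Y/S_0^0,\ \forall\, Y\in K\}$, which your first two paragraphs do establish. You should flag the typo rather than manufacture an argument for an equality that fails already for $K=Irr$.
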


\begin{proof}
It is a special case of Theorem 4.21 in \cite{Liuex}.
\end{proof}

We summarize the results in this section in the Table \ref{Table:Fourier duality on 1-quons}.
We recover several results about M\"{u}ger's center from a different point of view.

%

%

\section{Graphic quons}

\subsection{Definitions}
In this section, we extend the unshaded subfactor planar algebra $\SA_{\bullet}$ to a surface algebra by Theorem \ref{Thm:unique extension}, still denoted by $\SA_{\bullet}$. We consider $\zeta:=Z(S_0)$ as a free variable. We study $n$-quons through the surface algebra, particularly the ones represented by surface tangles.

Recall that $\SA_4$ is the space of 1-quons. Take its $n^{\rm th}$ tensor power $(\SA_4)^n$ to be the space of $n$-quons.
Let us denote $Q_n^m:=\hom((\SA_4)^m,(\SA_4)^n)$ to be the space of transformations from $m$-quons to $n$-quons. We ignore the index when it is zero.
For $\vec{X}=X_1\otimes \cdots \otimes X_n \in Irr^n$, we define $\beta_{\vec{X}}=\beta_{X_1}\otimes \cdots \otimes \beta_{X_n}$.
Then $\{\beta_{\vec{X}}\}_{\vec{X}\in Irr^n}$ form an ONB of $Q_n$.
\begin{notation}
We denote the bra-ket notation for the n-quon $\beta_{\vec{X}}$ by $\ket{\vec{X}}$ and $\ket{\vec{X}}=\ket{X_1\cdots X_n}$. The bra-ket notation for a transformation in $Q_n^m$ is given by $\displaystyle  \sum_{\vec{Y} \in Irr^m} \sum_{\vec{X} \in Irr^n} c_{\vec{X}}^{\vec{Y}} \ket{\vec{X}}\bra{\vec{Y}}$.
\end{notation}

By the commutative diagram \eqref{Equ:reflection}, when we reverse the orientation of a disc of a surface tangle, we switch $\bra{X}$ and $\ket{X}$ in its partition function. One can consider it as the Frobenius reciprocity.
When we use the bra-ket notation for n-quons, we have an order for the tensor. Thus we also order the discs for surface tangles from 1 to $n$.
The choice of the order is identical to the action of a permutation on the tensors.

\begin{notation}
Let $LT_n^m$ be the set of labelled surface tangles with $m$ input discs and $n$ output discs, so that each disc has four boundary points.
\end{notation}
Then the partition function $Z$ is a surjective map from $LT_n^m$ to $\SA_4$.

\begin{definition}
For a genus-$g$ labelled tangle $T$ in  $LT_n^{m}$, we define the normalized quon $\ket{T}$ by
\be
\ket{T}:=Z(S_g)^{-1} Z(T).
\ee
\end{definition}

By Theorem \ref{Thm:unique extension}, the extension from spherical planar algebra to surface algebras is unique up to the choice of $\zeta=Z(S_0)$.
\begin{proposition}
The normalized quon $\ket{T}$ is independent of the choice of $\zeta$.
\end{proposition}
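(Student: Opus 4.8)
The plan is to show that rescaling $\zeta$ scales $Z(T)$ and $Z(S_g)$ by exactly the same factor, so the quotient $Z(S_g)^{-1}Z(T)$ is unchanged. First I would recall from Theorem~\ref{Thm:unique extension} and its proof that, once a non-degenerate spherical planar algebra $\PA_\bullet$ is fixed, the extension to a surface algebra is rigidly determined by the single scalar $\zeta=Z(S_0)$: concretely, the inner product on $\SA_\bullet$ is $\zeta$ times the one on $\PA_\bullet$, and the partition function of any fully labelled surface tangle is computed by cutting along the edges of the contracted graph $G_T$ via the joint relation \eqref{Equ:joint relation} and assembling fully labelled genus-$0$ pieces. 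So I would let $\SA_\bullet$ and $\SA_\bullet'$ be the extensions of the same $\PA_\bullet$ with sphere values $\zeta$ and $\zeta'$ respectively, write $\lambda=\zeta'/\zeta$, and track how $Z'$ compares to $Z$ on a fully labelled tangle.

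The key computation is a bookkeeping of Euler-characteristic type. For a fully labelled genus-$g$ surface tangle $T$ with $k$ discs, the interior $3$-manifold contracts to a planar graph $G_T$ with, say, $V$ vertices and $E$ edges; the genus-$0$ pieces obtained after applying the joint relation at every edge are (essentially) labelled spheres, and each such sphere contributes a factor of the sphere value. Since the joint relation involves no $\zeta$-dependence in its summands (the bases $\{\alpha_k\},\{\beta_k\}$ are dual with respect to $B_n$, whose $\zeta$-dependence I would factor out explicitly, or normalize away), the only $\zeta$-dependence in $Z(T)$ is an overall power $\zeta^{e(T)}$ for an integer exponent $e(T)$ depending on the combinatorics of $G_T$; and Proposition~\ref{Prop:Joint relation} already computes $Z(S_g)=\zeta^{1-g}$, which fixes the normalization. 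The cleanest route is: (i) prove $Z'(T)=\lambda^{\,e(T)}Z(T)$ for fully labelled $T$, with $e(T)$ an integer invariant of the underlying surface tangle; (ii) specialize to $T=S_g$ to get $e(S_g)=1-g$; (iii) observe that for a genus-$g$ labelled tangle $T\in LT_n^m$ (not fully labelled) the same scaling holds with the \emph{same} exponent $e(T)=1-g$, because adding unlabelled discs does not change the surface's genus and the joint relation reduces the computation to the fully labelled case on the doubled/capped surface; hence $\ket{T}'=Z'(S_g)^{-1}Z'(T)=\lambda^{-(1-g)}\lambda^{(1-g)}Z(S_g)^{-1}Z(T)=\ket{T}$.

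Alternatively — and this may be the slicker writeup — I would avoid computing $e(T)$ at all: glue $T$ to its reflection $r(T)$ or cap off its discs with fixed labelled hemispheres to produce a \emph{closed} genus-$g'$ surface tangle whose partition function is $\zeta^{1-g'}$ by Proposition~\ref{Prop:Joint relation}, and use the naturality/contraction axioms plus the joint relation to express $Z(T)$ as $\zeta^{1-g}$ times a $\zeta$-independent vector built from genus-$0$ data of $\PA_\bullet$. Then $\ket{T}=Z(S_g)^{-1}Z(T)$ manifestly loses the $\zeta^{1-g}$ factor.

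The main obstacle is step (i)/the genus bookkeeping: one must be careful that the ``$\zeta$-independent'' genus-$0$ ingredients really are independent of $\zeta$, i.e.\ that the identification $D_n\colon\SA_n\to\SA_n^*$ and the dual bases used in the joint relation are normalized so that their $\zeta$-scaling is pinned down and cancels. Concretely, $B_n$ in the surface algebra equals $\zeta$ times $B_n$ in the planar algebra (per the remark ``the bilinear form in planar algebras is $\tfrac1\zeta B_n$''), so the dual basis $\{\beta_k\}$ picks up a compensating $\zeta^{-1}$; one has to verify these cancel against the sphere factors in exactly the right way so that the residual power of $\zeta$ in $Z(T)$ is precisely $\zeta^{1-g}$, matching $Z(S_g)$. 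Once that accounting is done, the proposition follows immediately.
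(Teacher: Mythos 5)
Your proposal is correct and is essentially the paper's argument spelled out in full: the paper's proof is the one-line ``It follows from the joint relation \eqref{Equ:joint relation},'' and your Euler-characteristic bookkeeping ($V$ sphere factors of $\zeta$ against $E$ dual-basis factors of $\zeta^{-1}$, giving $\zeta^{V-E}=\zeta^{1-g}$, which cancels against $Z(S_g)$) is exactly the computation that one-liner is gesturing at. Your attention to the $\zeta$-scaling of the dual bases in the joint relation is the right subtlety to flag, and your step (iii) handling of unlabelled discs (pairing with $\zeta$-independent labels) closes the argument correctly.
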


\begin{proof}
It follows from the joint relation \eqref{Equ:joint relation}.
\end{proof}

\begin{definition}
Let $T_n^m$ be the subset  $LT_n^m$ consisting of surface tangles.
We call $GQ_n^m:=Z(T_n^m)$ the space of graphic quon transformations and $GQ_n:=Z(T_n)$ the space of graphic n-quons.
\end{definition}

\subsection{From graphs to graphic quons}
Let $G_n$ be the set of oriented graphs on a surface whose the edges are ordered from 1 to $n$. For $\Gamma \in G_n$, let us construct a surface tangle $T_\Gamma \in T_n$: We replace each oriented edge of $\Gamma$ by an output disc with four marked points; we replace each $n$-valent vertex of $\Gamma$ by a planar diagram with $2n$ boundary points as follows,
\begin{center}
\begin{tikzpicture}
\draw (0,0)--(0,1);
\draw[->] (0,0)--(0,.5);
\node at (1,.5) {$\to$};
\begin{scope}[shift={(2,0)}]
\fill[blue!20] (0,0) rectangle (1,1);
\draw[blue] (0,0) rectangle (1,1);
\draw[->,blue] (0,0)--(0,.5);
\end{scope}

\begin{scope}[shift={(-3,-1)}]
\foreach \x in {0,1,2,3}{
\draw  (1.5,.8)--(\x,0);
}
\fill (1.5,.8) circle (.1);
\node at (1.5,.16) {$\cdots$};
\node at (4,.4) {$\to$};
\begin{scope}[shift={(5,0)},scale=.8]
  \draw (.5,0) arc (180:0:.5);
  \draw (2,0) arc (180:0:.5);
  \draw (3.5,0) arc (180:0:.5);
  \draw (5,0) arc (0:180: 2.5 and 1);
\node at (2.5,0.2) {$\cdots$};
 \end{scope}
 \end{scope}
\end{tikzpicture} ~~~.
\end{center}
Moreover, we obtain a graphic quon $\ket{T_\Gamma}$.

We can also define $\ket{T_\Gamma}$ for $\Gamma$ in $\C \otimes \C^{op}$ directly: Each $k$-valent vertex of $\Gamma$ is replaced by the rotationally invariant morphism
$\delta^{\frac{k}{2}}\mu_k \in \hom_{\C\otimes \C^{op}}(1,\gamma^k)$ in Equation \eqref{Equ:spider}. Each edge is an output disc with two marked points. The target space of each output disc is $\hom(\gamma,\gamma) \cong L^2(Irr)$.

In general, when we identify the string labelled by the Frobenius algebra $\gamma$ as a pair of parallel strings, it requires a shading in the middle \footnote{This identification is a classical trick in subfactor theory. The alternating shading is essential in the study of subfactor planar algebras.}.
Here we can lift the shading by the modular self-duality of MTCs. This is used in a crucial way in \S \ref{Sec:Fourier duality}.

%

\begin{definition}
An non-zero $n$-quon is called positive, if all coefficients are non-negative.
\end{definition}

\begin{proposition}\label{Prop:positive}
For any $\Gamma \in G_n$, $\ket{T_\Gamma}$ is positive. Equivalently, its dual $\bra{T_\Gamma}$ is a positive linear functional on the tensor power of $L^2(Irr)$.
\end{proposition}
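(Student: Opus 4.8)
The plan is to compute the coefficients of $\ket{T_\Gamma}$ explicitly in the quantum-coordinate basis $\{\beta_{\vec{X}}\}$ and check they are non-negative. First I would reduce to the direct construction of $\ket{T_\Gamma}$ in $\C\otimes\C^{op}$ described just above the statement: each $k$-valent vertex is the rotationally invariant morphism $\delta^{k/2}\mu_k\in\hom_{\C\otimes\C^{op}}(1,\gamma^k)$ from Equation~\eqref{Equ:spider}, and each edge carries an output disc whose target space is $\hom(\gamma,\gamma)\cong L^2(Irr)$. Using the joint relation \eqref{Equ:joint relation} on $\SA_4\cong L^2(Irr)$, the normalized quon $\ket{T_\Gamma}$ is obtained by cabling: pair up the $\gamma$-strings incident to the vertices along the edges of $\Gamma$, inserting a resolution of identity $\sum_X \ket{X}\bra{X}$ (up to the $d(X)$-weights coming from \eqref{Equ:basis}) on each edge.

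Next I would write out $\langle\vec{X}\,|\,T_\Gamma\rangle$ as a state sum over edge-colorings of $\Gamma$ by $Irr$. Concretely, fix a coloring assigning $X_e\in Irr$ to edge $e$; since we are evaluating against the fixed basis vector $\ket{\vec{X}}$, only the coloring with $X_e = X_e$ (the prescribed labels on the edges) survives, and what remains at each vertex $v$ is the inner product of $\delta^{k(v)/2}\mu_{k(v)}$ with the tensor $\bigotimes \beta_{X_e}$ over the edges incident to $v$. By the explicit formula in \eqref{Equ:spider}, $\mu_k = \delta^{1-k/2}\sum_{\vec{X}} d(\vec{X})^{1/2}\sum_{\alpha\in ONB(\vec{X})}\alpha\otimes\alpha^{op}$, and pairing this against the identity-type morphisms $\beta_X = d(X)^{-1}1_{X_D}$ produces, via Frobenius reciprocity in the unitary category $\C$, a factor of the form $d(\vec{X}_v)^{1/2}\sum_{\alpha\in ONB}\langle \alpha, (\text{identification})\rangle$ times positive powers of $\delta$ — and crucially, because $\C$ is \emph{unitary}, each such vertex contribution is a sum of squared norms $\sum_\alpha \|\cdot\|^2 \ge 0$, hence non-negative. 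Multiplying the vertex contributions over all vertices and the $d(X_e)$-weights over all edges (all positive), and dividing by $Z(S_g)=\zeta^{1-g}$ which cancels in the normalization, gives a non-negative coefficient.

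The main obstacle, and where I would be most careful, is the bookkeeping of the morphism pairings at a vertex: one must verify that collapsing the $k$ incident $\gamma$-strings of a vertex $\delta^{k/2}\mu_k$ against the basis vectors $\beta_{X_{e_1}},\dots,\beta_{X_{e_k}}$ really yields a manifestly non-negative number rather than something that only happens to be real. The point is that $\mu_k$ is built from an orthonormal basis $\alpha\otimes\alpha^{op}$ of $\hom_{\C\otimes\C^{op}}(1,\gamma^k)$ on the "doubled" factor, and the $op$-factor is the complex conjugate/adjoint of the $\C$-factor, so every pairing decomposes as $\sum_\alpha |c_\alpha|^2$ up to positive scalars $d(\vec{X})$ and powers of $\delta$; this is exactly the structure that makes $\bra{T_\Gamma}$ a positive linear functional (a sum of "vector states"). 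Once this local positivity at each vertex is established, the global statement follows by multiplicativity of the state sum over vertices and edges. I would also note the equivalent reformulation: $\bra{T_\Gamma}$ positive on $\bigotimes L^2(Irr)$ is immediate once every coefficient in the ONB expansion is $\ge 0$, since the $\beta_{\vec{X}}$ form an ONB and positivity of all matrix entries in an ONB with non-negative entries is equivalent to being a non-negative-combination functional.
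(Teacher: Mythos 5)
Your key mechanism is the right one and is in fact the paper's: expand each vertex morphism $\delta^{k/2}\mu_k$ as the manifestly positive combination of terms $\alpha\otimes\alpha^{op}$ given in Equation \eqref{Equ:spider}, and use the fact that the $\C^{op}$ tensor factor evaluates to the complex conjugate of the $\C$ factor, so that everything reduces to sums of $|z|^2$. But the way you organize the computation does not work. You assert that, after fixing the edge labels $X_e$, ``what remains at each vertex $v$ is the inner product of $\delta^{k(v)/2}\mu_{k(v)}$ with $\bigotimes\beta_{X_e}$,'' that this is a non-negative scalar, and that $\langle\vec{X}|T_\Gamma\rangle$ is the product of these scalars over vertices. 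That is false for any connected graph with more than one vertex: pairing $\mu_{k(v)}$ against the edge labels $1_{(X_e)_D}$ does not produce a scalar but a vector in $\hom_{\C}(1,X_{e_1}\otimes\cdots\otimes X_{e_k})\otimes\hom_{\C^{op}}(1,\cdot)$, and these vertex tensors must then be contracted against one another along the edges of $\Gamma$. There is no factorization of $\langle\vec{X}|T_\Gamma\rangle$ as a product of local vertex scalars; already for the theta graph the two trivalent vertex tensors are paired with each other, giving a single global contraction of the form $\sum_{\alpha,\beta}(\cdots)\langle\alpha,\beta\rangle\overline{\langle\alpha,\beta\rangle}$ rather than a product of two separately non-negative numbers. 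So the step ``the global statement follows by multiplicativity of the state sum over vertices'' is where your argument stalls.

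The correct organization, which is the paper's, is global rather than local: expand every vertex simultaneously, so that $\langle\vec{X}|T_\Gamma\rangle$ becomes a sum, over one choice of $\alpha_v\in ONB(\vec{X}_v)$ per vertex, of strictly positive coefficients (products of $d(\vec{X}_v)^{1/2}$, powers of $\delta$, and the $d(X_e)^{-1}$ from the edge labels) multiplying the value of a fully labelled tangle that splits as a tangle in $\C$ tensored with its opposite in $\C^{op}$. For each such global choice the two factors evaluate to a complex number $z$ and its conjugate $\overline{z}$, so every term of the state sum contributes $|z|^2\geq 0$. This is exactly the ``sum of vector states'' structure you allude to in your last paragraph, but it is a statement about each global term, one choice of $\alpha_v$ for all vertices at once, not a per-vertex positivity that you can multiply; with that reorganization your argument becomes the paper's proof.
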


\begin{proof}
For any $\vec{X} \in Irr^{n}$, we label the $j^{\rm th}$ edge of $\Gamma$ by $\beta(X_j)=d(X)^{-1} 1_{X} \otimes 1_{X}^{op}$. We label each $k$-valent vertex of $\Gamma$ by $\delta^{\frac{k}{2}}\mu_k$. Since $\mu_k$ is a positive linear sum of $\alpha \otimes \alpha^{op}$ in Equation \eqref{Equ:spider}. It is enough to show that for each choice of $\alpha \otimes \alpha^{op}$, the value is non-negative.  For each choice, we obtain a fully surface labelled tangle in $\C$ and its opposite in $\C^{op}$. Thus the value is multiplication of a complex conjugate pair, which is non-negative. Therefore  $\bra{T_\Gamma} \vec{X} \rangle \geq 0$.
\end{proof}

If the graph $\Gamma$ is connected, then $\ket{T_{\Gamma}}$ is usually entangled for any bipartite partition.
So we call $\bra{T_\Gamma}$ a {\it topologically entangled measurement} on quons.

\begin{definition}
For $\Gamma \in G_n$, we define $\overline{\Gamma} \in G_n$ by reversing the orientations of all edges of $\Gamma$.
\end{definition}

\begin{proposition}\label{Prop:Z2}
For any $\Gamma \in G_n$,
\be
\ket{T_{\overline{\Gamma}}}=\ket{T_\Gamma}.
\ee
\end{proposition}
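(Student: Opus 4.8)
The plan is to show that reversing all edge orientations of $\Gamma$ produces a graphic quon with the same coefficients in the quantum coordinate basis. The key point is that the rotationally invariant vertex morphism $\delta^{\frac{k}{2}}\mu_k \in \hom_{\C\otimes\C^{op}}(1,\gamma^k)$ from Equation~\eqref{Equ:spider} is \emph{self-conjugate} in a precise sense: because $\mu_k$ is built symmetrically from $\alpha\otimes\alpha^{op}$ over an ortho-normal basis of $\hom_{\C}(1,\vec X)$ (and the dual of such a morphism is again of the same form, by Frobenius reciprocity together with the unitarity of the ONB), reversing the orientation at a vertex sends $\delta^{\frac{k}{2}}\mu_k$ to itself. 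Combined with the fact that $\beta_X = d(X)^{-1}1_{X_D}$ is invariant under the duality map $D$ (each $1_{X_D}$ is sent to a scalar multiple of $1_{X_D}$, and the normalization by $d(X)^{-1}$ makes it exactly fixed after identifying $\SA_4$ with its dual), every building block of $T_\Gamma$ is symmetric under reversing orientation.

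First I would make precise what reversing orientation does pictorially: by the commutative diagram~\eqref{Equ:reflection} and the remark following it, reversing the orientation of an output disc in a surface tangle interchanges $\bra{X}$ and $\ket{X}$ in the partition function, i.e.\ applies the duality map $D$ on that tensor factor. So $\ket{T_{\overline\Gamma}}$ is obtained from $\ket{T_\Gamma}$ by applying $D$ on every edge-disc \emph{and} simultaneously reversing orientation at every vertex-diagram. Second, I would verify the two local invariances: (a) $D(\beta_X)=\beta_X$ under the canonical identification of $\SA_4$ with $\SA_4^*$, which follows from \eqref{Equ:basis} and the fact that $1_{X_D}$ is, up to the scalar $d(X)$, self-dual; (b) the vertex morphism $\delta^{\frac{k}{2}}\mu_k$ is fixed by the orientation reversal, which follows from its explicit ONB expression in \eqref{Equ:spider}: the dual of $\sum_{\alpha} \alpha\otimes\alpha^{op}$ is $\sum_{\alpha}\bar\alpha\otimes\bar\alpha^{op}$, and since $\{\bar\alpha\}$ runs over an ONB of $\hom_{\C}(1,\overline{\vec X})$, this is exactly the corresponding vertex morphism for the reversed vertex, with the $d(\vec X)^{1/2}$ and $\delta$-powers unchanged because $d(\overline{\vec X})=d(\vec X)$. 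Third, I would assemble these: decompose $T_\Gamma$ via the joint relation \eqref{Equ:joint relation} along all edges into fully labelled genus-$0$ pieces (one per vertex), apply (a) on each edge and (b) on each vertex, and reassemble to get $\ket{T_{\overline\Gamma}}$; since nothing changed, the two normalized quons are equal. (The normalization $Z(S_g)^{-1}$ is the same for $\Gamma$ and $\overline\Gamma$ since they have the same genus, so the result descends to $\ket{T_\Gamma}$ as well.)

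The main obstacle I anticipate is pinning down the self-duality of the vertex morphism $\delta^{\frac{k}{2}}\mu_k$ carefully — in particular making sure the identification of $\hom_{\C}(1,\vec X)$ with $\hom_{\C}(1,\overline{\vec X})^*$ via the ONB and the pivotal structure is compatible with the cyclic/rotational symmetry already built into $\mu_k$, and that no anomalous scalar (Frobenius--Schur indicator or pivotal coefficient) appears. In a unitary MTC with the standard choice of pivotal structure these scalars are trivial, and the rotational invariance of $\mu_k$ established in \cite{LiuXu} (used implicitly in \eqref{Equ:spider}) should already encode exactly what is needed; so I expect this to reduce to a direct comparison of ONB expansions rather than a new computation. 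An alternative, perhaps cleaner route: observe that reversing all orientations globally is realized by the reflection-type operation that is an \emph{orientation-preserving} symmetry here because every disc is flipped simultaneously, together with the modular self-duality of $\C$ (the $\theta_2$/opposite symmetry of the Jones--Wassermann subfactor), so $\ket{T_{\overline\Gamma}} = \ket{T_\Gamma}$ is really a manifestation of the symmetric self-duality of $\SA_\bullet$; if the referee prefers, this gives a one-line conceptual proof modulo citing that self-duality.
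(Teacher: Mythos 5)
Your reduction to local building blocks misidentifies what passing from $\Gamma$ to $\overline{\Gamma}$ actually does to the tangle. Reversing the edge orientations does not turn output discs into input discs, so the duality map $D$ and the reflection diagram \eqref{Equ:reflection} are not the relevant operations; it moves the distinguished interval of each edge-disc to the antipodal position, i.e.\ it applies the two-click rotation $\FS^{2}$ to each tensor factor. Since $\FS^{2}$ acts on the quantum coordinates by charge conjugation, $\FS^{2}\ket{X}=\ket{\overline{X}}$, the edge label $\beta_X$ is sent to $\beta_{\overline{X}}$, \emph{not} fixed, and your claims (a) and (b) at best establish the identity $\langle \vec{X}\ket{T_{\overline{\Gamma}}}=\langle \overline{\vec{X}}\ket{T_{\Gamma}}$ (which is where the paper's proof also starts). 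They cannot yield the proposition on their own: equality of the coefficients of $\vec{X}$ and of $\overline{\vec{X}}$ is a genuine extra statement, false for an arbitrary vector in $(\SA_4)^{\otimes n}$.

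The missing ingredient is Proposition \ref{Prop:positive}. The paper closes the gap via $\langle\overline{\vec{X}}\ket{T_\Gamma}=\overline{\langle\vec{X}\ket{T_\Gamma}}$ --- conjugating all labels conjugates the scalar, because the evaluation in $\C\otimes\C^{op}$ is a sum of products of complex-conjugate pairs --- and then uses positivity to conclude that each coefficient is real, so the bar disappears. Your proposal never invokes positivity or reality of the coefficients; without it the argument can only produce $\ket{T_{\overline{\Gamma}}}=\vec{S}^{2}\ket{T_\Gamma}$, equivalently that the coefficients of $T_{\overline{\Gamma}}$ are the charge conjugates (hence complex conjugates) of those of $T_\Gamma$, which is strictly weaker. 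The ``one-line conceptual proof'' via a global reflection suffers from the same problem: a reflection of $3$-space sends $T_\Gamma$ to a mirror tangle with all disc orientations reversed, whose partition function is $R(Z(T_\Gamma))$, i.e.\ again only a complex conjugate of what you want; removing the conjugation still requires Proposition \ref{Prop:positive}.
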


\begin{proof}
For any $\vec{X} \in Irr^n$,
\begin{align*}
&\langle \vec{X} \ket{T_{\overline{\Gamma}}}
=\langle \overline{\vec{X}} \ket{T_\Gamma}
=\overline{\langle \vec{X} \ket{T_\Gamma}}
=\langle \vec{X} \ket{T_\Gamma},
\end{align*}
where $\overline{\vec{X}}=\overline{X_1}\otimes \cdots \otimes \overline{X_n}$ and
the last equality follows from Proposition \ref{Prop:positive}. So $\ket{T_{\overline{\Gamma}}}=\ket{T_\Gamma}$.
\end{proof}

There are many interesting graphs on surfaces. The symmetry of (oriented) graphs leads to the symmetry of graphic quons.
For examples, there are five platonic solids on the spheres: tetrahedron, cube, octahedron, dodecahedron and icosahedron.
The number of edges are 6, 12, 12, 30, 30 respectively.

The value of the tetrahedron in $\C$ is well-known as the $6j$ symbol. Here we are working in $\C \otimes \C^{op}$, thus the value becomes the absolute square of the $6j$ symbol. If the dimension of the hom space is not 1, then we need to sum over an ONB for these $6j$ symbol squares. The sum is a good quantity to understand the global property of 6j symbols, as it is independent of the choice of the ONB.

We take an oriented tetrahedron and order the edges by 1 to 6 as shown in Fig~\ref{Fig: tetrahedron}.
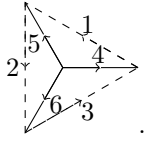
\begin{figure}\label{Fig: tetrahedron}
\begin{tikzpicture}
\coordinate (O) at (0,0);
\foreach \x in {0,1,2}
{
\coordinate (A\x) at ({cos(120*\x)},{sin(120*\x)});
\coordinate (C\x) at ({(cos(120*\x)+cos(120*(\x+1)))/2},{(sin(120*\x)+sin(120*(\x+1)))/2});
\coordinate (D\x) at ({(cos(120*\x)+cos(120*(\x+1)))/1.5},{(sin(120*\x)+sin(120*(\x+1)))/1.5});
\draw (O)--(A\x);
\draw[->,dashed] (A\x)--(C\x);
\draw[white] (.5,0) arc (0:120*\x:.5) coordinate (B\x);
\draw[->] (O)--(B\x);
\draw[white] (.5,0) arc (0:120*\x+20:.5) coordinate (E\x);
}
\node at (D0) {$1$};
\node at (D1) {$2$};
\node at (D2) {$3$};
\node at (E0) {$4$};
\node at (E1) {$5$};
\node at (E2) {$6$};

\draw[dashed] (A0) -- (A1) -- (A2) -- (A0);
\end{tikzpicture}.
\caption{Tetrahedron: The first three edges are outside and the last three edges are inside. They are order by the angle from $0^{\circ}$ to $360^{\circ}$.
The orientation of the first three edges are anti-clockwise. The orientation of the last three edges towards outside.
We consider the tetrahedron as a graph on the sphere. Dashed lines indicate that the first three edges are at the back of the sphere. }
\end{figure}
We denote this graph by $\Gamma_6$.
Then we obtain a 6-quon in terms of $6j$-symbol squares, denote by
\be\label{Equ:6j}
\ket{T_{\Gamma_6}}=
\sum_{\vec{X}\in Irr^6}
\left|{{{X_{1}X_{2}X_{3}}\choose{X_{4}X_{5}X_{6}}}}\right|^{2} \ket{\vec{X}}
\ee

For a general MTC $\C$, it could be difficult to compute the coefficients of these graphic quons. Actually the closed form of $6j$ symbols are only known for a few examples. We can manipulate these graphic quons in a pictorial way by their graphic definition, even though we do not know the algebraic closed forms of their coefficients.

\subsection{GHZ and Max}\label{Sec: GHZ Max}

Greenberg, Horne and Zeilinger introduced a multipartite resource state for quantum information, called the GHZ state, denoted by $\GHZ$ \cite{GHZ}.
In \cite{JafLiuWoz-HS}, Jaffe, Wozniakowski and the authur find another resource state following topological intuition, called $\Max$. They both  generalize the Bell state.
For the $3$-qubit case,
\begin{align*}
\GHZ&=2^{-1/2} (|000\rangle+|111\rangle), \\
\Max&=2^{-1}(|000\rangle+|011\rangle+|101\rangle+|110\rangle).
\end{align*}
We observe that $\GHZ$ and $\Max$ are Fourier duals of each other:
\be
\Max=(F\otimes F\otimes F)^{\pm1}\GHZ,
\ee
where $F$ is the discrete Fourier transform

In tensor networks, the $\GHZ$ and $\Max$ are represented as two trivalent vertices:
\begin{tikzpicture}
\begin{scope}[shift={(0,0)},xscale=-.5,yscale=.5]
\draw (0,0)--(-1,-1);
\draw (0,0)--(0,-1);
\draw (0,0)--(1,-1);
\draw[black] (0,0) circle (.2);
\fill[black] (0,0) circle (.2);
\end{scope}
\end{tikzpicture}
and
\begin{tikzpicture}
\begin{scope}[shift={(0,0)},xscale=-.5,yscale=.5]
\draw (0,0)--(-1,-1);
\draw (0,0)--(0,-1);
\draw (0,0)--(1,-1);
\fill[white] (0,0) circle (.2);
\draw[black] (0,0) circle (.2);
\end{scope}
\end{tikzpicture}.
They have been considered as two fundamental tensors in \cite{Laf03}, see also \cite{JLW-Quon,Bia17,Coe17}.

It is shown in \cite{JLW-Quon} that $\GHZ$ and $\Max$ are graphic quons, and the corresponding surface tangles are given by
\begin{center}
\begin{tikzpicture}
\begin{scope}[scale=.25]
\draw[blue] (.5,6.5) circle (4);
\foreach \x in {0,1}{
\foreach \y in {0,1}{
\foreach \u in {0}{
\foreach \v in {1,2,3}{
\coordinate (A\u\v\x\y) at (\x+1.5*\u,\y+3*\v);
}}}}

\foreach \u in {0}{
\foreach \v in {1,2,3}{
\fill[blue!20] (A\u\v00) rectangle (A\u\v11);
\draw (A\u\v00) rectangle (A\u\v11);
\draw[->] (A\u\v00)--++(0,.5);
\node at (.5,3*\v+.5) {\v};
}}

\draw (A0101) to [bend left=30] (A0200);
\draw (A0201) to [bend left=30] (A0300);
\draw (A0301) to [bend left=-30] (A0100);

\draw (A0111) to [bend left=-30] (A0210);
\draw (A0211) to [bend left=-30] (A0310);
\draw (A0311) to [bend left=30] (A0110);

\node at (7,6) {and};
\node at (20,5) {.};

\begin{scope}[shift={(20,6)},rotate=90]
\draw[blue] (.5,6.5) circle (4);
\foreach \x in {0,1}{
\foreach \y in {0,1}{
\foreach \u in {0}{
\foreach \v in {1,2,3}{
\coordinate (A\u\v\x\y) at (\x+1.5*\u,\y+3*\v);
}}}}

\foreach \u in {0}{
\foreach \v in {1,2,3}{
\fill[blue!20] (A\u\v00) rectangle (A\u\v11);
\draw (A\u\v00) rectangle (A\u\v11);
\draw[->] (A\u\v01)--++(.5,0);
\node at (.5,3*\v+.5) {\v};
}}

\draw (A0101) to [bend left=30] (A0200);
\draw (A0201) to [bend left=30] (A0300);
\draw (A0301) to [bend left=-30] (A0100);

\draw (A0111) to [bend left=-30] (A0210);
\draw (A0211) to [bend left=-30] (A0310);
\draw (A0311) to [bend left=30] (A0110);
\end{scope}
\end{scope}
\end{tikzpicture}
\end{center}


Inspired by this observation, we generalize $\GHZ$ and $\Max$ to $n$-quons on genus-$g$ surfaces for the MTC $\C$.

\begin{definition}
Let us define the genus-$g$ tangles $GHZ_{n,g}$ and $Max_{n,g}$ in $T_n$ as follows:

\be
GHZ_{n,g}=
\raisebox{-1cm}{
\begin{tikzpicture}
\pgftransformcm{1}{0}{0}{1}{}
\begin{scope}[scale=.6]
\begin{scope}[shift={(0,0,-1)}]
\foreach \x in {1,2,3,4,5,6}
{
\draw[dashed] (2*\x,0)--++(0,1);
\draw[dashed] (2*\x+1,0)--++(0,1);
}

\foreach \x in {1,2,3}
{
\draw[dashed] (2*\x+1,0)--++(0,1);
}

\foreach \x in {1,2,3,4,5}
{
\draw[dashed] (2*\x+1,1)--++(1,0);
}
\draw[dashed] (2,0)--++(0,2);
\draw (2,2)--++(11,0)--++(0,-3);

\foreach \x in {4,5}
{
\draw[dashed] (2*\x+1,0)--++(1,0);
}
\draw[dashed] (2*4,0)--++(0,-1)--++(5,0)--++(0,1);

\end{scope}

\begin{scope}
\foreach \x in {1,2,3,4,5,6}
{
\draw (2*\x,0)--++(0,1);
\draw (2*\x+1,0)--++(0,1);
}

\foreach \x in {1,2,3}
{
\fill[blue!20]  (2*\x,0)--++(1,0)--++(0,0,-1)--++(-1,0);
\draw[blue] (2*\x,0)--++(1,0)--++(0,0,-1);
\draw[->,blue] (2*\x,0)--++(.5,0);
\draw[blue,dashed] (2*\x,0)--++(0,0,-1)--++(1,0);
\draw (2*\x+1,0)--++(0,1);
\node at (2*\x+.5,0,-.5) {\x};
}

\foreach \x in {1,2,3,4,5}
{
\draw (2*\x+1,1)--++(1,0);
}
\draw (2,0)--++(0,2)--++(11,0)--++(0,-2);

\foreach \x in {4,5}
{
\draw (2*\x+1,0)--++(1,0);
}
\draw (2*4,0)--++(0,-1)--++(5,0)--++(0,1);
\end{scope}
\end{scope}
\end{tikzpicture}
}.
\ee

\be
Max_{n,g}=
\raisebox{-1cm}{
\begin{tikzpicture}
\pgftransformcm{1}{0}{0}{1}{}
\begin{scope}[scale=.6]
\begin{scope}[shift={(0,0,-1)}]
\foreach \x in {1,2,3,4,5,6}
{
\draw[dashed] (2*\x,0)--++(0,1);
\draw[dashed] (2*\x+1,0)--++(0,1);
}

\foreach \x in {1,2,3}
{
\draw[dashed] (2*\x+1,0)--++(0,1);
}

\foreach \x in {1,2,3,4,5}
{
\draw[dashed] (2*\x+1,1)--++(1,0);
}
\draw[dashed] (2,0)--++(0,2);
\draw (2,2)--++(11,0)--++(0,-3);

\foreach \x in {4,5}
{
\draw[dashed] (2*\x+1,0)--++(1,0);
}
\draw[dashed] (2*4,0)--++(0,-1)--++(5,0)--++(0,1);

\end{scope}

\begin{scope}
\foreach \x in {1,2,3,4,5,6}
{
\draw (2*\x,0)--++(0,1);
\draw (2*\x+1,0)--++(0,1);
}

\foreach \x in {1,2,3}
{
\fill[blue!20]  (2*\x,0)--++(1,0)--++(0,0,-1)--++(-1,0);
\draw[blue] (2*\x,0)--++(1,0)--++(0,0,-1);
\draw[->,blue] (2*\x+1,0)--++(0,0,-.5);
\draw[blue,dashed] (2*\x,0)--++(0,0,-1)--++(1,0);
\draw (2*\x+1,0)--++(0,1);
\node at (2*\x+.5,0,-.5) {\x};
}

\foreach \x in {1,2,3,4,5}
{
\draw (2*\x+1,1)--++(1,0);
}
\draw (2,0)--++(0,2)--++(11,0)--++(0,-2);

\foreach \x in {4,5}
{
\draw (2*\x+1,0)--++(1,0);
}
\draw (2*4,0)--++(0,-1)--++(5,0)--++(0,1);
\end{scope}
\end{scope}
\end{tikzpicture}
}.
\ee
\end{definition}
Here we draw the tangles for $n=3$, $g=2$. The readers can figure out the general case.
The corresponding tensor network notations could be generalized (up to a scalar) as

\raisebox{-.5cm}{
\begin{tikzpicture}
\foreach \x in {1,2,3,4,5,6}
{\draw (\x,0)--++(0,1);
}
\foreach \x in {2,3,4,5}
{\fill[black] (\x,1) circle (.1);
}
\foreach \x in {5}
{\fill[black] (\x,0) circle (.1);
}
\draw (1,1)--(6,1);
\draw (4,0)--(6,0);
\foreach \x in {4,5}
{
\draw[blue] (\x+.3,.5) to [bend left=30] (\x+.7,.5);
\draw[blue] (\x+.2,.5) to [bend left=-30] (\x+.8,.5);
}
\end{tikzpicture}}
\text{ and }
\raisebox{-.5cm}{
\begin{tikzpicture}
\foreach \x in {1,2,3,4,5,6}
{\draw (\x,0)--++(0,1);
}
\draw (1,1)--(6,1);
\draw (4,0)--(6,0);
\foreach \x in {2,3,4,5}
{
\fill[white] (\x,1) circle (.1);
\draw (\x,1) circle (.1);
}
\foreach \x in {5}
{
\fill[white] (\x,0) circle (.1);
\draw (\x,0) circle (.1);
}
\foreach \x in {4,5}
{
\draw[blue] (\x+.3,.5) to [bend left=30] (\x+.7,.5);
\draw[blue] (\x+.2,.5) to [bend left=-30] (\x+.8,.5);
}
\end{tikzpicture}}~~.

\begin{remark}
From tensor network to quons language, we fat a string to a cuboid. The relations of the two Frobenius algebras becomes topological isotopy in two orthogonal directions, indicated by black and white.
\end{remark}

%
%

\begin{proposition}\label{Prop:GHZ}
For $n,g \geq 0$,
\begin{align}
\GHZ_{n,g}&=  \sum_{X\in Irr} d(X)^{2-n-2g} \overbrace{|XX\cdots X \rangle}^{n \text{ entries}}.
\end{align}
\end{proposition}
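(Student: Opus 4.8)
The plan is to compute the coefficients $\langle \vec{X} \ket{GHZ_{n,g}}=\langle X_1\cdots X_n\ket{GHZ_{n,g}}$ directly, by labelling the $n$ output discs of the tangle $GHZ_{n,g}$ with the quantum coordinates $\beta_{X_i}=d(X_i)^{-1}1_{(X_i)_D}$ of Equation~\eqref{Equ:basis} and evaluating each vertex by Equation~\eqref{Equ:spider}. Up to isotopy $GHZ_{n,g}$ is a fat graph all of whose vertices are copy spiders $\mu_k$, so the starting observation is that, by Equation~\eqref{Equ:spider}, $\mu_k$ is the $k$-fold comultiplication of the commutative Frobenius algebra $\SA_4\cong L^2(Irr)$ dual to the multiplication of Equation~\eqref{Equ:multiplication}. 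Since the $1_{X_D}=d(X)\beta_X$ are the minimal central idempotents --- this is exactly Equation~\eqref{Equ:multiplication}, $\ket{X}\ket{Y}=\delta_{X,Y}d(X)^{-1}\ket{X}$ --- feeding $\beta_{X_i}$ into the legs forces $X_1=\cdots=X_n$ and kills the coefficient otherwise, in agreement with the right hand side. Concretely, $\mu_3$ acts on the basis as $\beta_X\mapsto d(X)^{-1}\beta_X\otimes\beta_X$ (the coproduct of a minimal idempotent $e$ in a Frobenius algebra with trace $\tau$ is $\tau(e)^{-1}e\otimes e$, and here $\tau(1_{X_D})=d(X)^2$), while the unit $1_\gamma=\sum_X 1_{X_D}=\sum_X d(X)\beta_X$ is the $n=1$ case of the statement.

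From here I would argue by a double induction on $(n,g)$. The base case $GHZ_{1,0}=\sum_X d(X)\ket{X}$ is the expansion of the unit $1_\gamma$ just recorded (one may equally start from $GHZ_{2,0}=\sum_X\ket{XX}$, which is the joint relation~\eqref{Equ:joint relation} for $\SA_4$ together with the self-duality of $\{\beta_X\}$, the automatic $\zeta$-normalization being absorbed via Theorem~\ref{Thm:unique extension}). The passage from $GHZ_{n,g}$ to $GHZ_{n+1,g}$ is obtained by grafting a $\mu_3$ onto one output leg; by co-associativity this again produces $GHZ_{n+1,g}$, and it multiplies the coefficient of $\ket{X\cdots X}$ by $d(X)^{-1}$, turning $d(X)^{2-n-2g}$ into $d(X)^{2-(n+1)-2g}$. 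The passage from $GHZ_{n,g}$ to $GHZ_{n,g+1}$ is obtained by grafting a $\mu_4$ onto one leg and then contracting two of the three new legs into a handle: the $\mu_4$ graft multiplies the coefficient by $d(X)^{-2}$, while the handle contraction closes an $X_D$-loop, contributing $d(X_D)=d(X)^2$ against the two $d(X)^{-1}$ factors of the contracted $\beta_X$'s, hence acting by $1$ on the $d(X)$-side and only raising the genus, so the coefficient becomes $d(X)^{2-n-2g}\cdot d(X)^{-2}=d(X)^{2-n-2(g+1)}$. Throughout, the powers of $\zeta$ look after themselves because the normalized quon $\ket{T}=Z(S_g)^{-1}Z(T)$ is $\zeta$-independent. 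Alternatively one can skip the induction and read off the exponent $2-n-2g$ as an Euler-characteristic count: once all labels are $\beta_X$ for a common $X$, the fully labelled tangle is a closed diagram in $\C\otimes\C^{op}$ with every string carrying $X_D$ and every vertex a (co)evaluation, of value $\zeta^{1-g}$ times a power of $d(X)$, and dividing by $Z(S_g)=\zeta^{1-g}$ leaves precisely $d(X)^{2-n-2g}$.

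The main obstacle is the normalization bookkeeping in the first paragraph: one must pin down that in the quantum-coordinate basis the copy spider $\mu_k$ carries exactly the scalar $d(X)^{-1}$ per comultiplication --- equivalently that $\tau(1_{X_D})=d(X)^2$ in the trace normalization inherited from Equation~\eqref{Equ:spider} --- with no stray powers of $\delta=\sqrt{\mu}$. This amounts to unwinding $\sum_{\alpha\in ONB(\vec{X})}\alpha\otimes\alpha^{op}$ for the constant tuple $\vec{X}=(X,\dots,X)$, re-expressing it in the endomorphism picture $\hom(\gamma,\gamma)^{\otimes k}$ via Frobenius reciprocity, and checking that it equals the predicted multiple of $\bigotimes_i 1_{X_D}$; the calibration factors $\delta^{1-k/2}$ and $d(\vec{X})^{1/2}$ in Equation~\eqref{Equ:spider} are exactly what makes all the $\delta$'s cancel after normalization. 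The remaining ingredients are already available: non-negativity of the coefficients is Proposition~\ref{Prop:positive}, the vanishing off the diagonal is orthogonality of $ONB(\vec{X})$ together with $1_{X_D}1_{Y_D}=\delta_{X,Y}1_{X_D}$, and topological well-definedness is Theorem~\ref{Thm:unique extension}.
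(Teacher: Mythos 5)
Your proposal is correct and follows essentially the same route as the paper: label the output discs with the quantum coordinates $\beta_{X}=d(X)^{-1}1_{X_D}$, use the minimality of $1_{X_D}$ (equivalently Equation \eqref{Equ:multiplication}) to force all entries equal, and count the resulting powers of $d(X)$ after normalizing by $Z(S_g)$. The paper obtains the exponent $2-n-2g$ in one step by applying the joint relation to all $g$ handles and evaluating the fully labelled genus-$0$ diagram, whereas you organize the same count as a double induction on $(n,g)$ via the Frobenius comultiplication; this is only a difference in bookkeeping, and your identification of the normalization $\tau(1_{X_D})=d(X)^2$ is exactly the calibration the paper leaves implicit.
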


\begin{proof}
By the joint relation \eqref{Equ:joint relation}, the coefficient of $\ket{\vec{X}}$ in $\GHZ_{n,g}$ is given by
\be
Z(S_0)^{-1}\sum_{\vec{Y} \in Irr^g}
\raisebox{-2cm}{
\begin{tikzpicture}
\pgftransformcm{1}{0}{0}{1}{}
\begin{scope}[scale=1]
\begin{scope}[shift={(0,0,-1)}]
\foreach \x in {1,2,3,6}
{
\draw[dashed] (2*\x,0)--++(0,1);
\draw[dashed] (2*\x+1,0)--++(0,1);
}

\foreach \x in {1,2,3}
{
\draw[dashed] (2*\x+1,0)--++(0,1);
}

\foreach \x in {1,2,3,4,5}
{
\draw[dashed] (2*\x+1,1)--++(1,0);
}
\draw[dashed] (2,0)--++(0,2);
\draw (2,2)--++(11,0)--++(0,-3);

\foreach \x in {4,5}
{
\draw[] (2*\x+1,0)--++(1,0);
}
\draw[dashed] (2*4,0)--++(0,-1)--++(5,0)--++(0,1);

\end{scope}

\begin{scope}
\foreach \x in {1,2,3,6}
{
\draw (2*\x,0)--++(0,1);
\draw (2*\x+1,0)--++(0,1);
}

\foreach \x in {1,2,3}
{
\fill[blue!20]  (2*\x,0)--++(1,0)--++(0,0,-1)--++(-1,0);
\draw[blue] (2*\x,0)--++(1,0)--++(0,0,-1);
\draw[->,blue] (2*\x+1,0)--++(-.5,0);
\draw[blue,dashed] (2*\x,0)--++(0,0,-1)--++(1,0);
\draw (2*\x+1,0)--++(0,1);
\node at (2*\x+.5,0,-.5) {$\beta_{X_\x}$};
}

\foreach \x in {4,5}
{
\fill[blue!20]  (2*\x,0)--++(1,0)--++(0,0,-1)--++(-1,0);
\draw[blue] (2*\x,0)--++(1,0)--++(0,0,-1);
\draw[->,blue] (2*\x,0)--++(.5,0);
\draw[blue] (2*\x,0)--++(0,0,-1)--++(1,0);
}
\foreach \x in {1,2}
{
\node at (2*\x+6.5,0,-.5) {$\beta_{Y_\x}$};
}

\begin{scope}[shift={(0,1)}]
\foreach \x in {4,5}
{
\fill[blue!20]  (2*\x,0)--++(1,0)--++(0,0,-1)--++(-1,0);
\draw[blue] (2*\x,0)--++(1,0);
\draw[->,blue] (2*\x+1,0)--++(-.5,0);
\draw[blue,dashed] (2*\x,0)--++(0,0,-1)--++(1,0)--++(0,0,1);
}
\foreach \x in {1,2}
{
\node at (2*\x+6.5,0,-.5) {$\beta_{Y_\x}$};
}
\end{scope}

\foreach \x in {1,2,3,4,5}
{
\draw (2*\x+1,1)--++(1,0);
}
\draw (2,0)--++(0,2)--++(11,0)--++(0,-2);

\foreach \x in {4,5}
{
\draw (2*\x+1,0)--++(1,0);
}
\draw (2*4,0)--++(0,-1)--++(5,0)--++(0,1);

\draw[dashed] (2*4,0,-1)--++(0,-1,0);
\end{scope}
\end{scope}
\end{tikzpicture}
}.
\ee
Since $\beta_{X}=d(X)^{-1}1_{X_D}$ and $1_{X_D}$ is a minimal projection, the coefficient is nonzero only when $\ket{\vec{X}}=|XX\cdots X \rangle$, for some $X\in Irr$. In this case, the coefficient is $d(X)^{2-n-2g}$.

\end{proof}

For $\vec{X}\in Irr^n$, let $\dim(\vec{X},g)$ be the dimension of the vector space consisting of vectors in genus-$g$ surface with boundary points $X_1,X_2,\ldots,X_n$ in $\C$.
Then
\begin{align*}
\dim(\vec{X},0)&=\dim\hom_{\C}(1,\vec{X}) \\
\dim(\vec{X},g)&=\sum_{\vec{Y}\in Irr^g} \dim\hom_{\C}(1,\vec{X}\otimes \vec{Y}\otimes \theta_1(\vec{Y})),\\
\end{align*}
where $\theta_1(\vec{Y})=\overline{Y_g}\otimes \cdots \otimes \overline{Y_1}$.

\begin{proposition}\label{Prop:Max}
For $n,g \geq 0$,
\begin{align}
\Max_{n,g}&=\delta^{2-n-2g} \sum_{\vec{k}\in Irr^n} \dim(\vec{X},g) |\vec{X} \rangle.
\end{align}
\end{proposition}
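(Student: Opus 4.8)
The plan is to compute the coefficient of $\ket{\vec{X}}$ in $\Max_{n,g}$ directly, in the spirit of the proof of Proposition \ref{Prop:GHZ}, rather than via the pictorial identity $Max_{n,g}=\vec{\FS}(GHZ_{n,g})$: going through $\vec{\FS}$ would, together with Proposition \ref{Prop:GHZ}, merely re-package the generalized Verlinde formula \eqref{Equ:1} rather than establish this proposition on its own. Applying the joint relation \eqref{Equ:joint relation} to the $n$ output discs and to the $2g$ handle discs of the genus-$g$ tangle $Max_{n,g}$, the coefficient of $\ket{\vec{X}}$ becomes $Z(S_0)^{-1}$ times a sum over $\vec{Y}\in Irr^g$ of the partition function of a fully labelled genus-$0$ tangle, whose output discs carry $\beta_{X_1},\dots,\beta_{X_n}$ and whose handle discs carry $\beta_{Y_1},\dots,\beta_{Y_g}$ together with their dual vectors.

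The only difference from the $GHZ$ computation lies in the distinguished intervals of the output discs: in $Max_{n,g}$ they are rotated by $90^{\circ}$ relative to $GHZ_{n,g}$, which is precisely what makes $Max_{n,g}=\vec{\FS}(GHZ_{n,g})$ pictorially. Because of this rotation, after substituting $\beta_{X_j}=d(X_j)^{-1}1_{X_j}\otimes 1_{X_j}^{op}$ the strings labelled $X_j$ are not forced to share a common label; each one instead enters one of the spider vertices, which by Equation \eqref{Equ:spider} is the rotationally invariant morphism $\delta^{k/2}\mu_k\in\hom_{\C\otimes\C^{op}}(1,\gamma^k)$. Expanding every spider vertex over an ONB $\alpha\otimes\alpha^{op}$ as in \eqref{Equ:spider}, the fully labelled closed diagram factors as a morphism diagram in $\C$ tensored with its mirror image in $\C^{op}$; the two sides are complex conjugates of one another, so each term is non-negative, and summing over the ONBs converts the diagram into a dimension count. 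Concretely, for fixed $\vec{Y}$ the surface underlying this closed diagram is the genus-$g$ surface with $n$ boundary circles labelled $X_1,\dots,X_n$, and the value is $\dim\hom_\C(1,\vec{X}\otimes\vec{Y}\otimes\theta_1(\vec{Y}))$ up to a scalar; summing over $\vec{Y}\in Irr^g$ reproduces $\dim(\vec{X},g)$ by the displayed formula for that quantity.

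It then remains to collect the scalars: the $d(X_j)^{-1}$ from the $\beta$'s, the $\delta^{k/2}$ from the spider vertices, a $\delta$ for each closed loop, and $Z(S_0)^{-1}$ assemble into $\delta^{\,2-n-2g}=\delta^{\chi}$, with $\chi=2-2g-n$ the Euler characteristic of the genus-$g$ surface with $n$ discs removed; this is the same ``statistical dimension to the power of the Euler characteristic'' bookkeeping behind the identity $Z(S_g)=\zeta^{1-g}$ of Proposition \ref{Prop:Joint relation}, and in particular the $\zeta$-dependence cancels, as it must since $\ket{Max_{n,g}}$ is independent of $\zeta$. This yields $\Max_{n,g}=\delta^{2-n-2g}\sum_{\vec{X}\in Irr^n}\dim(\vec{X},g)\ket{\vec{X}}$.

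The main obstacle is the middle step. One must check carefully that cutting along the $2g$ handle discs indeed reconstructs the genus-$g$ surface carrying the boundary data $\vec{X}$; that the mirror pairing of $\alpha$ with $\alpha^{op}$ is exactly the pairing that turns $\sum_{\alpha\in ONB}$ into $\dim\hom_\C(1,\vec{X}\otimes\vec{Y}\otimes\theta_1(\vec{Y}))$; and that, in contrast with the $GHZ$ case, the minimal projections $\beta_{X_j}$ impose no identifications among the $X_j$. Verifying the $g=0$ and small-$n$ cases against the explicit pictures of $Max_{n,g}$ is a practical way to pin down the combinatorics and the scalar conventions before assembling the general argument.
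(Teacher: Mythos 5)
Your proposal is correct and follows essentially the same route as the paper: the paper's proof likewise computes the coefficient of $\ket{\vec{X}}$ directly by applying the joint relation (summing over $\vec{Y}\in Irr^g$ from cutting the handles), then expands the spider vertices via Equation \eqref{Equ:spider} so that the value becomes $\sum_{\alpha_1,\alpha_2\in ONB}\langle\alpha_1\otimes\alpha_1^{op},\alpha_2\otimes\alpha_2^{op}\rangle=\dim\hom_{\C}(1,\vec{X}\otimes\vec{Y}\otimes\theta_1(\vec{Y}))$, with the prefactor $\delta^{2-n-2g}$ collected exactly as in your Euler-characteristic bookkeeping. The details you flag as needing verification are precisely the ones the paper handles implicitly in its labelled-tangle picture, so there is no gap in substance.
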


\begin{proof}
By the joint relation \eqref{Equ:joint relation}, the coefficient of $\ket{\vec{X}}$ in $\Max_{n,g}$ is given by

\be
Z(S_0)^{-1}\sum_{\vec{Y} \in Irr^g}
\raisebox{-2cm}{
\begin{tikzpicture}
\pgftransformcm{1}{0}{0}{1}{}
\begin{scope}[scale=1]
\begin{scope}[shift={(0,0,-1)}]
\foreach \x in {1,2,3,6}
{
\draw[dashed] (2*\x,0)--++(0,1);
\draw[dashed] (2*\x+1,0)--++(0,1);
}

\foreach \x in {1,2,3}
{
\draw[dashed] (2*\x+1,0)--++(0,1);
}

\foreach \x in {1,2,3,4,5}
{
\draw[dashed] (2*\x+1,1)--++(1,0);
}
\draw[dashed] (2,0)--++(0,2);
\draw (2,2)--++(11,0)--++(0,-3);

\foreach \x in {4,5}
{
\draw[] (2*\x+1,0)--++(1,0);
}
\draw[dashed] (2*4,0)--++(0,-1)--++(5,0)--++(0,1);

\end{scope}

\begin{scope}
\foreach \x in {1,2,3,6}
{
\draw (2*\x,0)--++(0,1);
\draw (2*\x+1,0)--++(0,1);
}

\foreach \x in {1,2,3}
{
\fill[blue!20]  (2*\x,0)--++(1,0)--++(0,0,-1)--++(-1,0);
\draw[blue] (2*\x,0)--++(1,0)--++(0,0,-1);
\draw[->,blue] (2*\x+1+1,0,-1)--++(0,0,.5);
\draw[blue,dashed] (2*\x,0)--++(0,0,-1)--++(1,0);
\draw (2*\x+1,0)--++(0,1);
\node at (2*\x+.5,0,-.5) {$\beta_{X_\x}$};
}

\foreach \x in {4,5}
{
\fill[blue!20]  (2*\x,0)--++(1,0)--++(0,0,-1)--++(-1,0);
\draw[blue] (2*\x,0)--++(1,0)--++(0,0,-1);
\draw[->,blue] (2*\x+1,0,-1)--++(0,0,.5);
\draw[blue] (2*\x,0)--++(0,0,-1)--++(1,0);
}
\foreach \x in {1,2}
{
\node at (2*\x+6.5,0,-.5) {$\beta_{Y_\x}$};
}

\begin{scope}[shift={(0,1)}]
\foreach \x in {4,5}
{
\fill[blue!20]  (2*\x,0)--++(1,0)--++(0,0,-1)--++(-1,0);
\draw[blue] (2*\x,0)--++(1,0);
\draw[->,blue] (2*\x+1,0,-1)--++(0,0,.5);
\draw[blue,dashed] (2*\x,0)--++(0,0,-1)--++(1,0)--++(0,0,1);
}
\foreach \x in {1,2}
{
\node at (2*\x+6.5,0,-.5) {$\beta_{Y_\x}$};
}
\end{scope}

\foreach \x in {1,2,3,4,5}
{
\draw (2*\x+1,1)--++(1,0);
}
\draw (2,0)--++(0,2)--++(11,0)--++(0,-2);

\foreach \x in {4,5}
{
\draw (2*\x+1,0)--++(1,0);
}
\draw (2*4,0)--++(0,-1)--++(5,0)--++(0,1);

\draw[dashed] (2*4,0,-1)--++(0,-1,0);
\end{scope}
\end{scope}
\end{tikzpicture}
}.
\ee

Appying Equation \eqref{Equ:spider}, the coefficient is
\begin{align*}
&\delta^{2-n-2g} \sum_{\vec{Y} \in Irr ^g} \sum_{\alpha_1,\alpha_2 \in ONB(\vec{X}\otimes \vec{Y} \otimes \theta_1(\vec{Y}))} \langle \alpha_1 \otimes \alpha_1^{op},  \alpha_2 \otimes \alpha_2^{op} \rangle\\
=&\delta^{2-n-2g}\sum_{\vec{Y}\in Irr^g} \dim\hom_{\C}(1,\vec{X}\otimes \vec{Y}\otimes \theta_1(\vec{Y}))\\
=&\delta^{2-n-2g} \dim(\vec{X},g).
\end{align*}

\end{proof}

\begin{definition}
Let us define the generating function for $\GHZ$ and $\Max$,
\begin{align}
\GHZ_n(z)=\sum_{g=0}^{\infty} \GHZ_{n,g} z^g,\\
\Max_n(z)=\sum_{g=0}^{\infty} \Max_{n,g} z^g.
\end{align}
\end{definition}

\begin{proposition}\label{Prop:GHZ rational}
For $n\geq0$,
\begin{align}
\GHZ_n(z)&=\sum_{k\in Irr} \frac{ d(X)^{4-n}}{ d(X)^2 -z} \overbrace{|X X\ldots X \rangle}^{n \text{ entries}}.
\end{align}
\end{proposition}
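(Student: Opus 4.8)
The plan is to simply substitute the closed formula from Proposition \ref{Prop:GHZ} into the definition of $\GHZ_n(z)$ and recognize a geometric series. Explicitly, by Proposition \ref{Prop:GHZ} we have $\GHZ_{n,g}=\sum_{X\in Irr} d(X)^{2-n-2g}\,|XX\cdots X\rangle$, so
\[
\GHZ_n(z)=\sum_{g=0}^{\infty}\GHZ_{n,g}\,z^g
=\sum_{g=0}^{\infty}\sum_{X\in Irr} d(X)^{2-n}\Bigl(\frac{z}{d(X)^2}\Bigr)^{g}|XX\cdots X\rangle .
\]
Since $Irr$ is finite, I would interchange the two sums freely; for each fixed $X$ the inner sum over $g$ is a geometric series with ratio $z/d(X)^2$, so it equals $\bigl(1-z/d(X)^2\bigr)^{-1}$, and multiplying through by $d(X)^{2-n}\cdot d(X)^2/d(X)^2$ gives $\dfrac{d(X)^{4-n}}{d(X)^2-z}$. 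Summing over $X$ yields the claimed formula.

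The only point requiring a word of care is the domain of validity: since $d(X)\ge 1$ for every simple object $X$ (with equality for the tensor unit), the geometric series converges for $|z|<1$, and one may either state the identity as holding there or interpret it as an identity of formal power series in $z$ (coefficient-wise, it is literally Proposition \ref{Prop:GHZ}). I would phrase it as the latter to avoid any analytic fuss, remarking that the right-hand side is the rational-function expression whose Taylor expansion at $z=0$ reproduces the coefficients $\GHZ_{n,g}$.

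There is essentially no obstacle here; the content of the proposition is purely the observation that summing the geometric progression in $g$ turns the family $\{\GHZ_{n,g}\}_g$ into a rational (in fact, partial-fraction) function of $z$. If one wanted to be thorough, the single line to justify is the interchange of $\sum_g$ and $\sum_{X\in Irr}$, which is immediate because the sum over $Irr$ is finite. An entirely parallel argument, which I would record as a remark or corollary, gives the analogous rational expression for $\Max_n(z)$ from Proposition \ref{Prop:Max}, namely $\Max_n(z)=\delta^{2-n}\sum_{g\ge 0}\bigl(\sum_{\vec X}\dim(\vec X,g)|\vec X\rangle\bigr)(z/\delta^2)^g$, where now the coefficients $\dim(\vec X,g)$ grow in $g$ so the resulting series is genuinely a formal power series rather than a finite partial-fraction sum.
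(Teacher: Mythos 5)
Your proposal is correct and is precisely the argument the paper leaves implicit (the paper states this proposition without proof, as an immediate consequence of Proposition \ref{Prop:GHZ}): substituting the coefficient $d(X)^{2-n-2g}$ into the generating function and summing the geometric series in $g$ with ratio $z/d(X)^{2}$ yields $d(X)^{4-n}/(d(X)^{2}-z)$. Your added remark that the identity should be read as one of formal power series (or for $|z|<1$, since $d(X)\geq 1$) is a sensible clarification the paper omits.
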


The coefficients of $\GHZ_n(z)$ are all rational functions. It is less obvious that the coefficients of $\Max_n(z)$ are also rational functions. We prove this in Theorem \ref{Thm:Maxz}

\section{Fourier duality}\label{Sec:Fourier duality}

In this section, we study the Fourier duality on graphic quons.
Without loss of generality, we only consider surface tangles in $T_n$ , i.e., all discs are output discs. Then their partition functions are graphic quons in $GQ_n$.

Recall that the SFT $\FS$ is a $90^{\circ}$ rotation of the output disc. The corresponding genus-0 tangle is given by
\be
\FS=
\raisebox{-.5cm}{
\begin{tikzpicture}
\begin{scope}[scale=1]

\fill[blue!20] (0,0+1)--(1,0+1)--(1.5,.5+1)--(.5,.5+1)--(0,0+1);
\draw[blue] (0,0+1)--(1,0+1)--(1.5,.5+1)--(.5,.5+1)--(0,0+1);
\draw[->,blue] (.5,1.5)--(.25,1.25);

\fill[blue!20] (0,0)--(1,0)--(1.5,.5)--(.5,.5)--(0,0);
\draw[blue] (0,0)--(1,0)--(1.5,.5);
\draw[blue,dashed] (1.5,.5)--(.5,.5)--(0,0);
\draw[->,blue] (0,0)--(.5,0);

\draw (0,0)--++(0,1);
\draw (1,0)--++(0,1);
\draw[dashed] (.5,.5)--++(0,1);
\draw (1.5,.5)--++(0,1);

\end{scope}
\end{tikzpicture}
}
\ee
The action of $\FS$ on the quantum coordinate $\{ \beta_X \}_{ X \in Irr}$ is identical to the $S$ matrix of $\C$.
We define the action of $\vec{\FS}$ on $T_n$ as the action of $\FS$ on all output discs. We define the action of $\vec{S}$ on $GQ_n$ as the $n^{\rm th}$ tensor power of $S$.


\begin{theorem}\label{Thm:Fourier duality}
For any unitary MTC $\C$, the following commutative diagram holds,
\begin{center}
 \begin{tikzpicture}
  \begin{scope}[node distance=4cm, auto, xscale=1,yscale=1]
  \foreach \x in {0,1,2,3} {
  \foreach \y in {0,1,2,3} {
  \coordinate (A\x\y) at ({2*\x},{.7*\y});
  }}
  \foreach \y in {0,3}{
  \node at (A0\y) {surface tangles};
  \node at (A3\y) {graphic quons};
  \draw[->] (A1\y)  to node {$Z$} (A2\y);
  }
  \draw[->] (A02) to node [swap] {$\vec{\FS}$} (A01);
  \draw[->] (A32) to node [swap] {$\vec{S}$} (A31);
  \end{scope}
  \end{tikzpicture}.
\end{center}
\end{theorem}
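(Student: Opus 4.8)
The plan is to deduce the $n$-quon statement from the $1$-quon identification of Proposition~\ref{Prop:SFT} by exploiting the naturality of the partition function of the surface algebra $\SA_{\bullet}$ (Theorem~\ref{Thm:unique extension}). The starting point is the observation that $\vec{\FS}$ is defined pictorially by gluing the genus-$0$ rotation tangle $\FS$ onto each output disc: for $T\in T_n$ one has
\[
\vec{\FS}(T) = T \circ_S \bigl(\FS^{\otimes n}\bigr),
\]
where $\otimes$ denotes the operadic tensor (disjoint union) and $\circ_S$ is the multiple contraction identifying the $j$-th output disc of $T$ with the input disc of the $j$-th copy of $\FS$. Since $\FS$ is a genus-$0$ tangle with one input and one output disc, each carrying four marked points, this operation changes neither the genus nor the requirement that every disc be a four-marked-point output disc, so $\vec{\FS}(T)\in T_n$ and $Z(\vec{\FS}(T))\in GQ_n$; in particular the normalization $Z(S_g)$ is untouched.

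First I would record that, read as a transformation in $\hom(\SA_4,\SA_4)$, the partition function $Z(\FS)$ of the rotation tangle is exactly the string Fourier transform $\FS$ on the $1$-quon space. Then I would invoke the naturality axiom~(4) of Definition~\ref{Def:SA}, extended to labelled tangles by Theorem~\ref{Thm:Labelled tangles} and combined with the joint relation of Proposition~\ref{Prop:Joint relation}: $Z$ intertwines the operadic tensor and contraction of surface tangles with the corresponding tensor and contraction of vectors. Applying this to $\vec{\FS}(T) = T \circ_S (\FS^{\otimes n})$ gives
\[
Z\bigl(\vec{\FS}(T)\bigr) = \FS^{\otimes n}\bigl(Z(T)\bigr),
\]
where $Z(T)\in (\SA_4)^{n}$ is taken in the bra-ket conventions fixed in this section and the $j$-th tensor factor of $\FS^{\otimes n}$ acts on the $j$-th output slot.

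Next I would apply Proposition~\ref{Prop:SFT}: on the quantum coordinate basis $\{\beta_X=\ket{X}\}_{X\in Irr}$ of $\SA_4\cong L^2(Irr)$ the operator $\FS$ equals the modular $S$ matrix, so $\FS^{\otimes n} = S^{\otimes n} = \vec{S}$ on $(\SA_4)^{n}$. Combining the two displays yields $Z(\vec{\FS}(T)) = \vec{S}(Z(T))$ for all $T\in T_n$, i.e.\ the commutativity of the diagram. The case of tangles carrying input discs then follows by Frobenius reciprocity, i.e.\ the reflection diagram~\eqref{Equ:reflection}.

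The step I expect to require the most care is the first one: making rigorous the identification of the pictorial operation $\vec{\FS}$ (``rotate every output disc by $90^\circ$'') with the operadic contraction against $\FS^{\otimes n}$, including the bookkeeping of disc orientations and of the fixed ordering of the $n$ discs, so that the indices matched by the contraction are exactly those on which $\vec{S}$ is declared to act. Once this is pinned down, the theorem is a formal consequence of naturality together with the $1$-quon identity of \cite{LiuXu}; the entire analytic and categorical content is inherited from Proposition~\ref{Prop:SFT}, and the surface-algebra formalism is precisely the mechanism that promotes that $1$-quon statement to the $n$-quon statement.
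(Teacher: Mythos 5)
Your proposal is correct and follows essentially the same route as the paper, whose proof simply cites Proposition~\ref{Prop:SFT} together with Theorem~\ref{Thm:unique extension}: the naturality of the partition function $Z$ of the extended surface algebra reduces the $n$-disc statement to the $1$-quon identification of $\FS$ with the modular $S$ matrix. You have merely made explicit the operadic bookkeeping (writing $\vec{\FS}(T)$ as a contraction of $T$ against $n$ copies of the rotation tangle) that the paper leaves implicit.
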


\begin{proof}
It follows from Proposition \ref{Prop:SFT} and \ref{Thm:unique extension}.
\end{proof}
In general, if we apply a global $90^{\circ}$ rotation to a labelled surface tangle, then its partition function is acted by the conjugation of $S$.
By Proposition \ref{Prop:Z2}, we have that
\begin{corollary}
For any oriented graph on the surface $T \in G_n$,
\be
\vec{S}^2\ket{T_\Gamma}=\ket{T_\Gamma}.
\ee
\end{corollary}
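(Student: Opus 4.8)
The plan is to combine Theorem~\ref{Thm:Fourier duality} with Proposition~\ref{Prop:Z2} and the elementary fact that the square of the modular $S$-matrix is the charge-conjugation matrix. First I would apply the commutative diagram of Theorem~\ref{Thm:Fourier duality} twice, obtaining $Z\circ\vec{\FS}^{2}=\vec{S}^{2}\circ Z$ on surface tangles; after dividing by the genus normalization $Z(S_g)$ this reads $\vec{S}^{2}\ket{T_\Gamma}=\ket{\vec{\FS}^{2}(T_\Gamma)}$. So it suffices to understand the operator $\vec{S}^{2}$, equivalently the tangle operation $\vec{\FS}^{2}$, which is a global $180^\circ$ rotation of every output disc.

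Next I would compute $\FS^{2}$ on $\SA_4$. By Proposition~\ref{Prop:SFT}, $\FS$ acts on the quantum coordinate as $S$, and $\FS^{2}$ is the $180^\circ$ rotation; rotating the minimal projection $\beta_X=d(X)^{-1}1_{X_D}$ of \eqref{Equ:basis} by $180^\circ$ yields $\beta_{\overline X}$, so $\FS^{2}\ket{X}=\ket{\overline X}$ and hence $S^2$ is the charge-conjugation matrix (equivalently $(S^2)_X^Z=\sum_Y S_X^Y S_Y^Z=\delta_{X,\overline Z}$ by the symmetry and unitarity of $S$, which is the one place the unitarity of $\C$ enters). Therefore $\vec{S}^{2}=(S^2)^{\otimes n}$ sends $\ket{\vec X}=\ket{X_1\cdots X_n}$ to $\ket{\overline{\vec X}}=\ket{\overline{X_1}\cdots\overline{X_n}}$. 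Writing $\ket{T_\Gamma}=\sum_{\vec X\in Irr^n}\langle\vec X\ket{T_\Gamma}\,\ket{\vec X}$ and reindexing the sum by $\vec X\mapsto\overline{\vec X}$, I get $\vec{S}^{2}\ket{T_\Gamma}=\sum_{\vec X}\langle\overline{\vec X}\ket{T_\Gamma}\,\ket{\vec X}$.

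Finally I would invoke the orientation-reversal identity extracted in the proof of Proposition~\ref{Prop:Z2}, namely $\langle\vec X\ket{T_{\overline\Gamma}}=\langle\overline{\vec X}\ket{T_\Gamma}$, which rewrites the last expression as $\sum_{\vec X}\langle\vec X\ket{T_{\overline\Gamma}}\,\ket{\vec X}=\ket{T_{\overline\Gamma}}$; then $\ket{T_{\overline\Gamma}}=\ket{T_\Gamma}$ by Proposition~\ref{Prop:Z2}, which finishes the proof. A purely pictorial variant also works: the $180^\circ$ rotation of the fat-edge disc attached to an edge $e$ interchanges the two pairs of marked points joined to the rotationally invariant vertex morphisms at the two endpoints of $e$, hence reverses the orientation of $e$; doing this to every edge identifies $\vec{\FS}^{2}(T_\Gamma)$ with $T_{\overline\Gamma}$ up to $3$D isotopy, after which Proposition~\ref{Prop:Z2} again concludes. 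The only point that really needs care is the identity $S^2=C$ (where unitarity is used) together with keeping the bra--ket reindexing bookkeeping consistent; everything else is formal given Theorem~\ref{Thm:Fourier duality} and Proposition~\ref{Prop:Z2}.
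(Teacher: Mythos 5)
Your proposal is correct and follows essentially the same route as the paper: the paper also deduces the corollary by noting that $\vec{S}^{2}$ corresponds via Theorem~\ref{Thm:Fourier duality} to the global $180^{\circ}$ rotation $\vec{\FS}^{2}$, which turns $T_\Gamma$ into $T_{\overline{\Gamma}}$, and then invokes Proposition~\ref{Prop:Z2}. Your extra algebraic verification that $\FS^{2}(\beta_X)=\beta_{\overline{X}}$ (i.e.\ $S^2$ is charge conjugation) is a correct and welcome elaboration of the step the paper leaves implicit.
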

So we call the graphic quons $\ket{T_\Gamma}$ and $\vec{S}\ket{T_\Gamma}$ the Fourier dual of each other.
\begin{remark}
By Proposition \ref{Prop:positive},  the Fourier dual pair of quons are both positive. It is an interesting phenomenon that the modular transformation $S$ preserves this positivity. It is difficult to construct such positive Fourier duals algebraically.
\end{remark}

\begin{corollary}
Note that $Max_{n,g}= \vec{\FS}(GHZ_{n,g})$, for any $n,g\geq 0$, so
\be \label{Equ:MaxGHZ}
\Max_{n,g}= \vec{S}\GHZ_{n,g}.
\ee
\end{corollary}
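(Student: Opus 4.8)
The plan is to push the pictorial identity $Max_{n,g}=\vec{\FS}(GHZ_{n,g})$ through the partition function $Z$ and invoke the commutative square of Theorem~\ref{Thm:Fourier duality}. Concretely, $Max_{n,g}$ and $GHZ_{n,g}$ are the surface tangles of the Definition above, and $\vec{\FS}$ is, by definition, the operadic composition that attaches the genus-$0$ rotation tangle $\FS$ to each of the $n$ output discs. Applying $Z$ and using $Z\circ\vec{\FS}=\vec{S}\circ Z$ (which is exactly the content of Theorem~\ref{Thm:Fourier duality}, itself a consequence of Proposition~\ref{Prop:SFT} and Theorem~\ref{Thm:unique extension}), one gets $Z(Max_{n,g})=\vec{S}\,Z(GHZ_{n,g})$.

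It then remains to pass to the normalized quons $\GHZ_{n,g}=Z(S_g)^{-1}Z(GHZ_{n,g})$ and $\Max_{n,g}=Z(S_g)^{-1}Z(Max_{n,g})$. This is harmless: since $Max_{n,g}$ and $GHZ_{n,g}$ both lie on the genus-$g$ surface (by the pictorial identity, or because each $\FS$ is just a cylinder carrying a $90^\circ$ twist of the marked-point data on its boundary circles, whose contraction onto an output disc changes neither the number of discs nor the genus), the common normalization $Z(S_g)^{-1}=\zeta^{g-1}$ cancels against $\vec{S}$, which is $\mathbb{C}$-linear. Hence $\Max_{n,g}=\vec{S}\,\GHZ_{n,g}$. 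Equivalently, one can argue at the level of the $\zeta$-independent normalized quons $\ket{T}$ of \S\ref{Sec: GHZ Max}, for which $\ket{\vec{\FS}(T)}=\vec{S}\ket{T}$ by the same computation.

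The only input beyond Theorem~\ref{Thm:Fourier duality} is the pictorial identity $Max_{n,g}=\vec{\FS}(GHZ_{n,g})$ itself, and this is where the real work sits: one must check that attaching the rotation tangle to every output disc of the explicit $GHZ_{n,g}$ picture yields, after isotopy in $3$-space, the explicit $Max_{n,g}$ picture. For $n=3,\ g=2$ this is the $90^\circ$-rotated pair of pictures drawn in \S\ref{Sec: GHZ Max}, and the general case follows the evident pattern; the verification amounts to tracking how the $90^\circ$ turn of each square-shaped disc converts the ``parallel'' gluing pattern of the strings of $GHZ_{n,g}$ into the ``crossed'' pattern of $Max_{n,g}$, using the Frobenius-algebra relations in the two orthogonal (black and white) directions of the preceding remark. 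I would record this as a lemma with the two families of surface tangles drawn side by side. As a consistency check, one may also verify the resulting scalar identity directly: Propositions~\ref{Prop:GHZ} and \ref{Prop:Max} give the coefficients of $\GHZ_{n,g}$ and $\Max_{n,g}$ explicitly, so $\Max_{n,g}=\vec{S}\,\GHZ_{n,g}$ reads $\dim(\vec{X},g)=\sum_{X\in Irr}(\prod_{i}S_{X_i}^{X})(S_{X}^1)^{2-n-2g}$, the generalized Verlinde formula \eqref{Equ:1} — but since that formula is a downstream target, the honest proof runs through the picture rather than around it.
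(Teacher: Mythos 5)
Your proposal is correct and follows the paper's own route: the pictorial identity $Max_{n,g}=\vec{\FS}(GHZ_{n,g})$ fed through the commutative square of Theorem~\ref{Thm:Fourier duality}, with the genus-$g$ normalization $Z(S_g)^{-1}$ cancelling on both sides by linearity of $\vec{S}$. The one place you over-engineer is the pictorial identity itself: by the paper's definitions the two tangles have literally the same underlying strings and discs and differ only in the placement of the distinguished interval on each disc (moved by one marked point), which is exactly $\vec{\FS}$, so no isotopy or Frobenius-relation bookkeeping is needed there.
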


\begin{theorem}[Verlinde formula]
For any unitary MTC $\C$ and any $n,g \geq 0$,
\begin{align}
\dim(\vec{X},g)&= \sum_{X\in Irr} (\prod_{i=1}^n S_{X_i}^{X}) (S_{X}^1)^{2-n-2g}
\end{align}

\end{theorem}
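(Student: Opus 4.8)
The plan is to read the identity off from the pictorial Fourier duality $\Max_{n,g}=\vec{S}\GHZ_{n,g}$ of Equation~\eqref{Equ:MaxGHZ} by expanding both sides in the orthonormal basis $\{\ket{\vec{X}}\}_{\vec{X}\in Irr^n}$ and comparing coefficients. The left-hand side is already computed in Proposition~\ref{Prop:Max}, namely $\Max_{n,g}=\delta^{2-n-2g}\sum_{\vec{X}\in Irr^n}\dim(\vec{X},g)\ket{\vec{X}}$, so the task reduces to expanding the right-hand side $\vec{S}\GHZ_{n,g}$.

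To do that I would start from Proposition~\ref{Prop:GHZ}, which gives $\GHZ_{n,g}=\sum_{W\in Irr}d(W)^{2-n-2g}\ket{WW\cdots W}$, and apply $\vec{S}$, the $n$-fold tensor power of $\FS$. Since $\FS(\ket{W})=\sum_Y S_W^Y\ket{Y}$ by Proposition~\ref{Prop:SFT}, multiplying out the $n$-th tensor power of $\sum_Y S_W^Y\ket{Y}$ yields
\[
\vec{S}\GHZ_{n,g}=\sum_{W\in Irr}d(W)^{2-n-2g}\sum_{\vec{X}\in Irr^n}\Big(\prod_{i=1}^n S_W^{X_i}\Big)\ket{\vec{X}}.
\]
Equating the coefficient of a fixed $\ket{\vec{X}}$ on the two sides then gives $\delta^{2-n-2g}\dim(\vec{X},g)=\sum_{W\in Irr}d(W)^{2-n-2g}\prod_{i=1}^n S_W^{X_i}$.

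The remaining step is purely bookkeeping with normalizations. Using the symmetry $S_W^{X_i}=S_{X_i}^W$ of the modular $S$-matrix together with the standard identity $S_W^1=d(W)/\delta$ (valid because $\delta^2=\mu$ is the global dimension), I would rewrite $d(W)^{2-n-2g}\delta^{-(2-n-2g)}=(S_W^1)^{2-n-2g}$ and divide through by $\delta^{2-n-2g}$, arriving at $\dim(\vec{X},g)=\sum_{W\in Irr}\big(\prod_{i=1}^n S_{X_i}^W\big)(S_W^1)^{2-n-2g}$, which is the claim after renaming the summation index $W$ to $X$. I do not anticipate a genuine obstacle: the substantive content sits in Theorem~\ref{Thm:Fourier duality} — which itself rests on Proposition~\ref{Prop:SFT} and Theorem~\ref{Thm:unique extension} — and in the coefficient computations of Propositions~\ref{Prop:GHZ} and~\ref{Prop:Max}; the only thing requiring attention is keeping the conventions for $\beta_X$, for the entries $S_X^Y$, and for $\delta$ mutually consistent so that the powers of $d(W)$ and $\delta$ collapse correctly into the single factor $(S_W^1)^{2-n-2g}$.
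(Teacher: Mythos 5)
Your proposal is correct and follows exactly the paper's route: the paper likewise deduces the identity by comparing coefficients in $\Max_{n,g}=\vec{S}\GHZ_{n,g}$ using Propositions \ref{Prop:GHZ} and \ref{Prop:Max} together with $d(X)=\delta S_X^1$. You have simply written out the coefficient bookkeeping that the paper leaves implicit.
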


\begin{proof}
Note that $\GHZ_{n,g}$ and $\Max_{n,g}$ are computed in Propositions \ref{Prop:GHZ} and \ref{Prop:Max}, and $d(X)=\delta S_X^1$.
The statement follows from comparing the coefficients on both sides of Equation \eqref{Equ:MaxGHZ}.
\end{proof}

The higher-genus Verlinde formula was first proved by Moore and Seiberg in the framework of CFT in \cite{MooSei89}. Here we prove it for any unitary MTC as the Fourier duality of $\GHZ$ and $\Max$.  The unitary condition is not necessary in the proof.


\begin{theorem}\label{Thm:Maxz}
For any $n\geq0$,
\be
\Max_n(z)=\sum_{\vec{X}\in Irr^n} \sum_{X\in Irr} (\prod_{i=1}^n S_{X_i}^{X}) \frac{ d(X)^{4-n}}{ d(X)^2 -z} \ket{\vec{X}}.
\ee
\end{theorem}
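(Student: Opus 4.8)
The plan is to derive the formula as a corollary of the Fourier duality $\Max_{n,g}=\vec{S}\,\GHZ_{n,g}$ (Equation \eqref{Equ:MaxGHZ}) together with the rational closed form for $\GHZ_n(z)$ in Proposition \ref{Prop:GHZ rational}. Since $\vec{S}$ is the $n^{\rm th}$ tensor power of the fixed linear map $S$ and the identity $\Max_{n,g}=\vec{S}\,\GHZ_{n,g}$ holds for every $g\geq 0$, applying $\vec{S}$ term by term to the generating series $\GHZ_n(z)=\sum_{g\geq 0}\GHZ_{n,g}z^g$ yields $\Max_n(z)=\vec{S}\,\GHZ_n(z)$ as an identity of power series in $z$; this converges for $|z|<1$ because $d(X)\geq 1$ for every $X\in Irr$ in a unitary MTC, and can in any case be interpreted as a coefficientwise identity of formal power series via Propositions \ref{Prop:GHZ} and \ref{Prop:Max}.

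Next I would substitute the closed form
\[
\GHZ_n(z)=\sum_{W\in Irr}\frac{d(W)^{4-n}}{d(W)^2-z}\,\ket{W\cdots W}
\]
from Proposition \ref{Prop:GHZ rational} and expand $\vec{S}\,\ket{W\cdots W}$ using Proposition \ref{Prop:SFT}, namely $\FS(\ket{W})=\sum_{Y\in Irr}S_W^Y\ket{Y}$, so that $\vec{S}\,\ket{W\cdots W}=\sum_{\vec{X}\in Irr^n}\left(\prod_{i=1}^n S_W^{X_i}\right)\ket{\vec{X}}$. Interchanging the two finite sums (over $W\in Irr$ and over $\vec{X}\in Irr^n$), using the symmetry $S_W^{X_i}=S_{X_i}^W$ of the modular matrix, and renaming the summation index $W$ to $X$ then produces
\[
\Max_n(z)=\sum_{\vec{X}\in Irr^n}\sum_{X\in Irr}\left(\prod_{i=1}^n S_{X_i}^X\right)\frac{d(X)^{4-n}}{d(X)^2-z}\,\ket{\vec{X}},
\]
which is the assertion.

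An equivalent route that avoids $\GHZ_n(z)$ altogether is to feed Proposition \ref{Prop:Max} and the higher-genus Verlinde formula directly into the definition of $\Max_n(z)$: the coefficient of $\ket{\vec{X}}$ is $\sum_{g\geq 0}\delta^{2-n-2g}\dim(\vec{X},g)\,z^g=\sum_{g\geq 0}\sum_{X\in Irr}\left(\prod_{i=1}^n S_{X_i}^X\right)(\delta S_X^1)^{2-n-2g}z^g$, and since $d(X)=\delta S_X^1$ the inner geometric series $\sum_{g\geq 0}d(X)^{2-n-2g}z^g$ sums to $d(X)^{4-n}/(d(X)^2-z)$. I do not expect a genuine obstacle here, since the statement is essentially a repackaging of results already proved; the only points needing a little care are the justification for summing the geometric series (or, equivalently, reading everything coefficientwise as a formal power series identity), the legitimacy of swapping the two finite index sums, and the bookkeeping identity $d(X)=\delta S_X^1$ used to pass between quantum dimensions and entries of $S$.
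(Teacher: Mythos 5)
Your first route is exactly the paper's proof: apply $\vec{S}$ to the generating series using $\Max_{n,g}=\vec{S}\,\GHZ_{n,g}$ and substitute the closed form from Proposition \ref{Prop:GHZ rational}; you simply spell out the expansion of $\vec{S}\ket{W\cdots W}$ and the symmetry $S_W^{Y}=S_{Y}^{W}$ that the paper leaves implicit. The alternative route via the higher-genus Verlinde formula and the geometric series is an equivalent repackaging, so there is nothing to object to.
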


\begin{proof}
By Equation \eqref{Equ:MaxGHZ}, we have
\be
\Max_{n}(z)= \vec{S}\GHZ_{n}(z).
\ee
By Proposition \ref{Prop:GHZ rational}, the statement holds.
\end{proof}

It is interesting that the coefficients of $\Max_n(z)$, namely the generating functions of $\{\dim(\vec{X},g)\}_{g\in \mathbb{N}}$, $\vec{X}\in Irr^n$, for all $n\geq 0$, live in a small dimensional vector space spanned by  $\left\{\displaystyle \frac{1}{ d(X)^2 -z}\right\}_{X \in Irr}$.

Note that the genus-0 $\GHZ$ and $\Max$ can be defined through the cycle graph and the dipole graph,
\begin{center}
\begin{tikzpicture}
\begin{scope}[shift={(-6,.5)}]
\draw (1,0)-- (4,0);
\draw (1,0) to [bend left=30] (4,0);
\foreach \x in {1,2,3,4}
{
\fill (\x,0) circle (.1);
}
\node at (2.5,.2) {$\cdots$};
\end{scope}

\node at (-1.5,0) {,};
\node at (1.5,0) {.};
\fill (0,0) circle (.1);
\fill (0,1) circle (.1);
\draw (0,0) arc (-90:90:1 and .5);
\draw (0,0) arc (-90:90:.5 and .5);
\draw (0,0) arc (270:90:.5 and .5);
\draw (0,0) arc (270:90:1 and .5);
\node at (0,.5) {$\cdots$};
\end{tikzpicture}
\end{center}
The two graphs are dual to each other.

In general, for an oriented graph $\Gamma \in G_n$ on the sphere, we obtain a genus-0 tangle $T_\Gamma$.
If we do not lift the shading, then the tangle $T_\Gamma$ has an alternating shading, and all distinguished intervals of the discs are unshaded. When we apply $\vec{\FS}$ to $T_\Gamma$, all distinguished intervals become shaded. By contracting the unshaded regions to a point, we obtain an oriented graph $\hat{\Gamma}$ in $G_n$, such that $T_{\hat{\Gamma}}=\vec{\FS}(T_{\Gamma})$. By Theorem \ref{Thm:Fourier duality}, we have that

\begin{theorem}\label{Cor:graph dual}
For any oriented graph $\Gamma \in G_n$ on the sphere,
\be \label{Equ:graph}
\ket{T_{\hat{\Gamma}}}=\vec{S}\ket{T_{\Gamma}}.
\ee
\end{theorem}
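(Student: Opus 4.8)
The plan is to reduce the statement to the already-established pictorial/algebraic Fourier duality of Theorem \ref{Thm:Fourier duality} together with one purely topological identification of surface tangles, namely
\be
T_{\hat{\Gamma}} \;=\; \vec{\FS}(T_{\Gamma})
\ee
up to isotopy in the $3$D space. Granting this identity, note that since $\Gamma$ is a graph on the sphere both $T_{\Gamma}$ and $T_{\hat{\Gamma}}$ are genus-$0$ tangles, and $\vec{\FS}$ is obtained by contracting genus-$0$ one-in-one-out tangles $\FS$ onto the output discs, so $\vec{\FS}(T_{\Gamma})$ is again genus-$0$; hence all three tangles carry the same normalization $Z(S_0)^{-1}=\zeta^{-1}$. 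Applying $Z$ to the displayed identity, dividing by $Z(S_0)$, and using the commutative square of Theorem \ref{Thm:Fourier duality} in the form $Z\circ\vec{\FS}=\vec{S}\circ Z$ on $T_n$, together with linearity of $\vec{S}$, gives
\be
\ket{T_{\hat{\Gamma}}} \;=\; Z(S_0)^{-1}Z\bigl(\vec{\FS}(T_{\Gamma})\bigr) \;=\; \vec{S}\,\bigl(Z(S_0)^{-1}Z(T_{\Gamma})\bigr) \;=\; \vec{S}\,\ket{T_{\Gamma}},
\ee
which is the assertion.

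So the real content is the topological claim, and this is where I would spend the effort. First I would work with the unlifted shading: then $T_{\Gamma}$ is an honest shaded surface tangle in which the regular neighborhood of the spine $\Gamma$ is shaded, the complementary regions form one unshaded disc per face of $\Gamma$, each edge of $\Gamma$ carries an output disc (box) with four marked points, and by construction the distinguished interval of each box lies on the unshaded side. The $90^{\circ}$ rotation $\FS$ applied to one box interchanges, at that box, the two shaded boundary strands with the two unshaded ones — intuitively it slides the box "across" its edge. Performing this simultaneously at every box converts the shaded neighborhood of $\Gamma$ into the shaded neighborhood of the graph obtained by collapsing each unshaded face to a vertex and reconnecting the boxes accordingly, which is precisely the fattened planar dual $\hat{\Gamma}$; the box labelled $j$ of $\hat{\Gamma}$ is the box labelled $j$ of $\Gamma$, so the edge ordering is preserved. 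One then checks that the induced boundary data match the conventions fixed for $T_{\hat{\Gamma}}$: the arrows marking the distinguished intervals and the anti-clockwise orientation of output discs come out as prescribed, while the resulting orientation of the edges of $\hat{\Gamma}$ is immaterial by Proposition \ref{Prop:Z2}. Finally, lifting the shading on both $T_{\Gamma}$ and $T_{\hat{\Gamma}}$ via the modular self-duality of $\C$ (replacing the $\gamma$-string by a pair of parallel strings) is done identically on both sides and commutes with the rotation, so the identity survives the lift.

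I expect the main obstacle to be exactly this last piece of bookkeeping: verifying cleanly, in a way that does not rely on one particular embedded picture, that rotating all boxes realizes the planar dual of the spine, and that the distinguished-interval arrows land where the definition of $T_{\hat{\Gamma}}$ demands. The cleanest way to make "slide the box across the edge" precise is to describe $T_{\Gamma}$ through its decomposition into vertex-tangles and edge-discs via the joint relation \eqref{Equ:joint relation}, so that applying $\FS$ to every edge-disc can be absorbed into a relabelling of which side of each edge is "inner" versus "outer"; once that is set up, the passage from $\Gamma$ to $\hat{\Gamma}$ is visible combinatorially, and everything else is a direct invocation of Theorem \ref{Thm:Fourier duality}.
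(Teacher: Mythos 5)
Your proposal is correct and follows essentially the same route as the paper: the paper also reduces the theorem to the topological identity $T_{\hat{\Gamma}}=\vec{\FS}(T_{\Gamma})$, established by the same shading argument (the distinguished intervals of $T_{\Gamma}$ lie on the unshaded side, the rotation moves them to the shaded side, and contracting the unshaded regions to points produces the dual graph), followed by an application of Theorem \ref{Thm:Fourier duality}. Your write-up is in fact more careful than the paper's about the normalization, the distinguished-interval bookkeeping, and the use of Proposition \ref{Prop:Z2} to dispose of the edge orientations.
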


If we forget the orientation, then $\hat{\Gamma}$ is the dual graph of $\Gamma$ on the sphere.
Thus the graphic duality coincides with the Fourier duality of quons on the sphere.
However, this is not true on surfaces. One needs further assumptions for graphs on surfaces: The faces are simply connected and the edges are contractable. We call such graphs {\it local}. Then Equation \eqref{Equ:graph} remains true for local graphs.

There are interesting graphs on surfaces that are not local. Actually the graphs for $\GHZ$ and $\Max$ on higher-genus surfaces are not local.
So the quon language provides a natural extension of the graphic duality, which is compatible with the algebraic Fourier duality.

For the five platonic solids on the spheres, there are two dual pairs and one self-dual tetrahedron.
We obtain three identities for the Fourier duality. The self-duality of the tetrahedron gives the self-duality for 6j-symbols.
\begin{theorem}[6j-symbol self-duality]
For any MTC $\C$, and any $\vec{X}\in Irr^6$,
\be \label{Equ:6jRelation}
\left|{{{X_{6}~X_{5}~X_{4}}\choose{\overline{X_{3}}~\overline{X_{2}}~\overline{X_{1}}}}}\right|^{2}
= \sum_{\vec {Y}\in Irr^6} \left(\prod_{k=1}^{6}S_{X_{k}}^{Y_{k}} \right)
\left|{{{Y_{1}Y_{2}Y_{3}}\choose{Y_{4}Y_{5}Y_{6}}}}\right|^{2}
\ee
\end{theorem}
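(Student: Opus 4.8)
The plan is to derive the identity as the instance of the graphic Fourier duality (Theorem~\ref{Cor:graph dual}) attached to the tetrahedron graph $\Gamma_6$ of Figure~\ref{Fig: tetrahedron}, using that the tetrahedron is self-dual \emph{as a map on the sphere}. By Equation~\eqref{Equ:6j} the graphic $6$-quon of $\Gamma_6$ is
$\ket{T_{\Gamma_6}}=\sum_{\vec Y\in Irr^6}\left|{{{Y_{1}Y_{2}Y_{3}}\choose{Y_{4}Y_{5}Y_{6}}}}\right|^{2}\ket{\vec Y}$.
Applying $\vec{S}$ and using $\vec{S}\ket{\vec Y}=\sum_{\vec X}\bigl(\prod_{k=1}^{6}S_{Y_k}^{X_k}\bigr)\ket{\vec X}$ together with the symmetry $S_{X}^{Y}=S_{Y}^{X}$ of the modular matrix, the coefficient of $\ket{\vec X}$ in $\vec{S}\ket{T_{\Gamma_6}}$ is precisely $\sum_{\vec Y\in Irr^6}\bigl(\prod_{k=1}^{6}S_{X_{k}}^{Y_{k}}\bigr)\left|{{{Y_{1}Y_{2}Y_{3}}\choose{Y_{4}Y_{5}Y_{6}}}}\right|^{2}$, i.e.\ the right-hand side of \eqref{Equ:6jRelation}. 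By Theorem~\ref{Cor:graph dual} this equals the coefficient of $\ket{\vec X}$ in $\ket{T_{\hat\Gamma_6}}$, so it remains only to show that $\langle\vec X\ket{T_{\hat\Gamma_6}}=\left|{{{X_{6}~X_{5}~X_{4}}\choose{\overline{X_{3}}~\overline{X_{2}}~\overline{X_{1}}}}}\right|^{2}$.

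For this I would first note that since the tetrahedron is a self-dual spherical map, $\hat\Gamma_6$ is again a tetrahedron, and apply Equation~\eqref{Equ:6j} to it: $\langle\vec X\ket{T_{\hat\Gamma_6}}$ is a $6j$-symbol square whose six entries are the $X_i$, arranged according to the incidence structure of $\hat\Gamma_6$ with the edge ordering inherited from $\Gamma_6$ under planar duality. Concretely, I would read off from Figure~\ref{Fig: tetrahedron} which face-pair each of the six edges of $\Gamma_6$ separates; this fixes the vertex–edge incidence of $\hat\Gamma_6$, and choosing a graph isomorphism $\hat\Gamma_6\cong\Gamma_6$ yields the induced permutation of the edge labels $1,\dots,6$. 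I would then track the orientations produced by the shading-lift that defines $\hat\Gamma$: this reverses the orientation of exactly three of the six edges (those whose distinguished intervals change from unshaded to shaded). Reversing one edge's orientation contributes a bar to the corresponding $X_i$ in the coefficient — this is Proposition~\ref{Prop:Z2} applied one edge at a time, i.e.\ the Frobenius reciprocity behind the commutative diagram~\eqref{Equ:reflection} — while reversing all six at once changes nothing. Combining the permutation with the three bars, and rewriting the outcome in the normal form of \eqref{Equ:6jRelation} by means of the tetrahedral relabelling symmetry of $\left|{{{\cdots}\choose{\cdots}}}\right|^{2}$ (itself a consequence of the isotopy invariance of $Z$ in Definition~\ref{Def:SA} together with Proposition~\ref{Prop:Z2}, since any two labellings of the tetrahedron on the sphere related by a symmetry give the same quon up to a permutation of tensor factors), gives $\langle\vec X\ket{T_{\hat\Gamma_6}}=\left|{{{X_{6}~X_{5}~X_{4}}\choose{\overline{X_{3}}~\overline{X_{2}}~\overline{X_{1}}}}}\right|^{2}$. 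Comparing coefficients with the first paragraph finishes the proof.

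The conceptual content is entirely contained in Theorems~\ref{Thm:Fourier duality} and~\ref{Cor:graph dual}; the only genuine work is the combinatorial bookkeeping of the second paragraph, and that is where I expect the main (though routine) obstacle to lie: one must handle the edge ordering, the choice of isomorphism $\hat\Gamma_6\cong\Gamma_6$, and the orientation conventions of Figure~\ref{Fig: tetrahedron} consistently, so that the edge permutation is exactly $1,2,3,4,5,6\mapsto 6,5,4,\overline{3},\overline{2},\overline{1}$ up to tetrahedral symmetry, and so that the three bars land on $X_1,X_2,X_3$ rather than on some other triple of edges. A useful consistency check is that both sides of \eqref{Equ:6jRelation} must be invariant under the simultaneous substitution $X_i\mapsto\overline{X_i}$ for all $i$ (Proposition~\ref{Prop:Z2}) and under the full tetrahedral relabelling group, which pins down the particular normal form appearing in the statement.
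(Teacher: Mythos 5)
Your proposal is correct and follows essentially the same route as the paper: invoke Theorem~\ref{Cor:graph dual} for the tetrahedron $\Gamma_6$ and compare coefficients via Equation~\eqref{Equ:6j}, with the remaining work being the combinatorial identification of $\hat\Gamma_6$ with a relabelled, partially orientation-reversed tetrahedron. The paper carries out exactly the bookkeeping you describe, but does it pictorially in Equation~\eqref{Equ:6j self dual} (dual graph followed by a $180^{\circ}$ rotation) rather than by tracking the edge permutation and bars symbolically.
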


\begin{proof}
We take the tetrahedron $\Gamma_6$ in Fig~\ref{Fig: tetrahedron}.
Its dual graph $\hat{\Gamma_6}$ is given by the second. The third is isotopic to the second by $180^{\circ}$ rotation.
\begin{equation}\label{Equ:6j self dual}
\begin{tikzpicture}
\coordinate (O) at (0,0);
\foreach \x in {0,1,2}
{
\coordinate (A\x) at ({cos(120*\x)},{sin(120*\x)});
\coordinate (C\x) at ({(cos(120*\x)+cos(120*(\x+1)))/2},{(sin(120*\x)+sin(120*(\x+1)))/2});
\coordinate (D\x) at ({(cos(120*\x)+cos(120*(\x+1)))/1.5},{(sin(120*\x)+sin(120*(\x+1)))/1.5});
\draw (O)--(A\x);
\draw[->,dashed] (A\x)--(C\x);
\draw[white] (.5,0) arc (0:120*\x:.5) coordinate (B\x);
\draw[->] (O)--(B\x);
\draw[white] (.5,0) arc (0:120*\x+20:.5) coordinate (E\x);
}
\node at (D0) {$1$};
\node at (D1) {$2$};
\node at (D2) {$3$};
\node at (E0) {$4$};
\node at (E1) {$5$};
\node at (E2) {$6$};
\draw[dashed] (A0) -- (A1) -- (A2) -- (A0);
\node at (2,0) {$\to$};
\node at (6,0) {$=$};
\begin{scope}[shift={(4,0)},xscale=-1]
\coordinate (O) at (0,0);
\foreach \x in {0,1,2}
{
\coordinate (A\x) at ({cos(120*\x)},{sin(120*\x)});
\coordinate (C\x) at ({(cos(120*\x)+cos(120*(\x+1)))/2},{(sin(120*\x)+sin(120*(\x+1)))/2});
\coordinate (D\x) at ({(cos(120*\x)+cos(120*(\x+1)))/1.5},{(sin(120*\x)+sin(120*(\x+1)))/1.5});
\draw[dashed] (O)--(A\x);
\draw[->] (A\x)--(C\x);
\draw[white] (.5,0) arc (0:120*\x:.5) coordinate (B\x);
\draw[->] (A\x)--(B\x);
\draw[white] (.5,0) arc (0:120*\x+20:.5) coordinate (E\x);
}
\node at (D0) {$5$};
\node at (D1) {$4$};
\node at (D2) {$6$};
\node at (E0) {$2$};
\node at (E1) {$1$};
\node at (E2) {$3$};
\draw (A0) -- (A1) -- (A2) -- (A0);
\end{scope}
\begin{scope}[shift={(8,0)},xscale=1]
\coordinate (O) at (0,0);
\foreach \x in {0,1,2}
{
\coordinate (A\x) at ({cos(120*\x)},{sin(120*\x)});
\coordinate (C\x) at ({(cos(120*\x)+cos(120*(\x+1)))/2},{(sin(120*\x)+sin(120*(\x+1)))/2});
\coordinate (D\x) at ({(cos(120*\x)+cos(120*(\x+1)))/1.5},{(sin(120*\x)+sin(120*(\x+1)))/1.5});
\draw (O)--(A\x);
\draw[->] (A\x)--(C\x);
\draw[white] (.5,0) arc (0:120*\x:.5) coordinate (B\x);
\draw[->] (A\x)--(B\x);
\draw[white] (.5,0) arc (0:120*\x+20:.5) coordinate (E\x);
}
\node at (D0) {$6$};
\node at (D1) {$5$};
\node at (D2) {$4$};
\node at (E0) {$3$};
\node at (E1) {$2$};
\node at (E2) {$1$};
\draw[dashed] (A0) -- (A1) -- (A2) -- (A0);
\end{scope}
\end{tikzpicture}
\end{equation}

By Corollary \ref{Cor:graph dual}, we have $\ket{T_{6j}}=\vec{S}\ket{T_{\hat{6j}}}$.
Comparing the coefficients using Equation \eqref{Equ:6j}, we obtain Equation \eqref{Equ:6jRelation}.
\end{proof}

In the special case of quantum $SU(2)$, the identity for the 6j-symbol self-duality was discovered by Barrett in \cite{Bar03}, based on an interesting identity of J. Robert \cite{Rob95}. Then the identity was generalized to some other cases related to $SU(2)$ in \cite{FNR07}.

To generalize the triangle to all regular polygons, our order of edges of the tetrahedron is slightly different from Barrett's choice.
To recover Barrett's original formula, we take the following tetrahedron:

\begin{center}
\begin{tikzpicture}
\coordinate (O) at (0,0);
\foreach \x in {0,1,2}
{
\coordinate (A\x) at ({cos(120*\x)},{sin(120*\x)});
\coordinate (C\x) at ({(cos(120*\x)+cos(120*(\x+1)))/2},{(sin(120*\x)+sin(120*(\x+1)))/2});
\coordinate (D\x) at ({(cos(120*\x)+cos(120*(\x+1)))/1.5},{(sin(120*\x)+sin(120*(\x+1)))/1.5});
\draw (O)--(A\x);
\draw[->,dashed] (A\x)--(C\x);
\draw[white] (.5,0) arc (0:120*\x:.5) coordinate (B\x);
\draw[->] (O)--(B\x);
\draw[white] (.5,0) arc (0:120*\x+20:.5) coordinate (E\x);
}
\node at (D0) {$3$};
\node at (D1) {$1$};
\node at (D2) {$2$};
\node at (E0) {$4$};
\node at (E1) {$5$};
\node at (E2) {$6$};

\draw[dashed] (A0) -- (A1) -- (A2) -- (A0);

\node at (2,0) {$\to$};
\node at (6,0) {$=$};
\node at (10,0) {$\to$};

\begin{scope}[shift={(4,0)},xscale=-1]
\coordinate (O) at (0,0);
\foreach \x in {0,1,2}
{
\coordinate (A\x) at ({cos(120*\x)},{sin(120*\x)});
\coordinate (C\x) at ({(cos(120*\x)+cos(120*(\x+1)))/2},{(sin(120*\x)+sin(120*(\x+1)))/2});
\coordinate (D\x) at ({(cos(120*\x)+cos(120*(\x+1)))/1.5},{(sin(120*\x)+sin(120*(\x+1)))/1.5});

\draw[dashed] (O)--(A\x);
\draw[->] (A\x)--(C\x);
\draw[white] (.5,0) arc (0:120*\x:.5) coordinate (B\x);
\draw[->] (A\x)--(B\x);
\draw[white] (.5,0) arc (0:120*\x+20:.5) coordinate (E\x);
}

\node at (D0) {$5$};
\node at (D1) {$4$};
\node at (D2) {$6$};
\node at (E0) {$1$};
\node at (E1) {$3$};
\node at (E2) {$2$};

\draw (A0) -- (A1) -- (A2) -- (A0);
\end{scope}

\begin{scope}[shift={(8,0)},xscale=1]
\coordinate (O) at (0,0);
\foreach \x in {0,1,2}
{
\coordinate (A\x) at ({cos(120*\x)},{sin(120*\x)});
\coordinate (C\x) at ({(cos(120*\x)+cos(120*(\x+1)))/2},{(sin(120*\x)+sin(120*(\x+1)))/2});
\coordinate (D\x) at ({(cos(120*\x)+cos(120*(\x+1)))/1.5},{(sin(120*\x)+sin(120*(\x+1)))/1.5});

\draw (O)--(A\x);
\draw[->] (A\x)--(C\x);
\draw[white] (.5,0) arc (0:120*\x:.5) coordinate (B\x);
\draw[->] (A\x)--(B\x);
\draw[white] (.5,0) arc (0:120*\x+20:.5) coordinate (E\x);
}

\node at (D0) {$5$};
\node at (D1) {$4$};
\node at (D2) {$6$};
\node at (E0) {$1$};
\node at (E1) {$3$};
\node at (E2) {$2$};

\draw[dashed] (A0) -- (A1) -- (A2) -- (A0);
\end{scope}

\begin{scope}[shift={(12,0)},xscale=1]
\coordinate (O) at (0,0);
\foreach \x in {0,1,2}
{
\coordinate (A\x) at ({cos(120*\x)},{sin(120*\x)});
\coordinate (C\x) at ({(cos(120*\x)+cos(120*(\x+1)))/2},{(sin(120*\x)+sin(120*(\x+1)))/2});
\coordinate (D\x) at ({(cos(120*\x)+cos(120*(\x+1)))/1.5},{(sin(120*\x)+sin(120*(\x+1)))/1.5});

\draw (O)--(A\x);
\draw[->] (A\x)--(C\x);
\draw[white] (.5,0) arc (0:120*\x:.5) coordinate (B\x);
\draw[->] (A\x)--(B\x);
\draw[white] (.5,0) arc (0:120*\x+20:.5) coordinate (E\x);
}

\node at (D0) {$6$};
\node at (D1) {$4$};
\node at (D2) {$5$};
\node at (E0) {$1$};
\node at (E1) {$2$};
\node at (E2) {$3$};

\draw[dashed] (A0) -- (A1) -- (A2) -- (A0);
\end{scope}
\end{tikzpicture}
\end{center}
The first arrow is the graphic duality. The $=$ is a rotation. The last arrow is a vertical reflection. By Propositions \ref{Prop:positive} and \ref{Prop:Z2}, the 6-quons corresponding to the last two graphs are the same.

We can generalize the tetrahedron to a sequence of self-dual graphs on the sphere:
\begin{center}
\begin{tikzpicture}
\coordinate (O) at (0,0);
\foreach \x in {0,1}
{
\draw[white] (1,0) arc (0:180*\x:1) coordinate (A\x);
\fill (0,0) circle (.05);
\draw (O)--(A\x);
}
\draw[dashed] (A0) to [bend right=30] (A1)  to [bend right=30] (A0);
\end{tikzpicture}
\quad,
\begin{tikzpicture}
\coordinate (O) at (0,0);
\foreach \x in {0,1,2}
{
\draw[white] (1,0) arc (0:120*\x:1) coordinate (A\x);
\draw (O)--(A\x);
}
\draw[dashed] (A0) -- (A1) -- (A2) -- (A0);
\end{tikzpicture}
\quad,
\begin{tikzpicture}
\coordinate (O) at (0,0);
\foreach \x in {0,1,2,3}
{
\draw[white] (1,0) arc (0:360/4*\x:1) coordinate (A\x);
\draw (O)--(A\x);
}
\draw[dashed] (A0) -- (A1) -- (A2) -- (A3) -- (A0);
\end{tikzpicture}
\quad,
\begin{tikzpicture}
\coordinate (O) at (0,0);
\foreach \x in {0,1,2,3,4}
{
\draw[white] (1,0) arc (0:360/5*\x:1) coordinate (A\x);
\draw (O)--(A\x);
}
\draw[dashed] (A0) -- (A1) -- (A2) -- (A3) -- (A4) -- (A0);
\end{tikzpicture}
\quad $\cdots$
\end{center}
We order and orient the edges of each graph similar to $\Gamma_6$ in Fig~\ref{Fig: tetrahedron}, and denote the oriented graph by $\Gamma_{2n}$, for $n\geq 2$. Then we obtain a $2n$-quon, denoted by
\be
\ket{T_{\Gamma_{2n}}}=
\sum_{\vec{X}\in Irr^{2n}}
\left|{{{X_{1}\phantom{aa}X_{2}\phantom{aa} \cdots~X_{n}}\choose{X_{n+1}X_{n+2}\cdots X_{2n}}}}\right|^{2} \ket{\vec{X}}.
\ee

\begin{theorem}
For any MTC $\C$, and any $\vec{X}\in Irr^{2n}$, $n\geq 2$,
\be
\left|{{{X_{2n}~X_{2n-1}\cdots X_{n}}\choose{\overline{X_{n}} \phantom{aa} \overline{X_{n-1}} ~\cdots \overline{X_{1}}}}}\right|^{2}
= \sum_{\vec {Y}\in Irr^{2n}} \left(\prod_{k=1}^{2n}S_{X_{k}}^{Y_{k}} \right)
\left|{{{Y_{1} \phantom{aa} Y_{2} \phantom{aa} \cdots ~Y_{n}}\choose{Y_{n+1}Y_{n+2}\cdots Y_{2n}}}}\right|^{2} \;.
\ee
\end{theorem}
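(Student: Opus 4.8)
The plan is to obtain this family of identities exactly as the $6j$-symbol self-duality \eqref{Equ:6jRelation} was obtained, namely by applying the graphic-duality-equals-Fourier-duality statement (Theorem~\ref{Cor:graph dual}) to the self-dual graphs $\Gamma_{2n}$. Recall that $\Gamma_{2n}$ is the wheel graph on the sphere: a single hub joined by $n$ spokes, carrying $X_1,\dots,X_n$, to the vertices of a rim $n$-cycle whose edges carry $X_{n+1},\dots,X_{2n}$, so that $V=n+1$, $E=2n$, and the faces are the $n$ inner triangles together with one outer face, $F=n+1$. Its fat graph $T_{\Gamma_{2n}}$ is a genus-$0$ tangle, and by construction $\ket{T_{\Gamma_{2n}}}$ is the $2n$-quon with coefficients $\bigl|\binom{X_1\cdots X_n}{X_{n+1}\cdots X_{2n}}\bigr|^2$ written just before the statement.

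First I would identify the planar dual $\hat{\Gamma}_{2n}$ on the sphere. The vertex dual to the outer face meets the $n$ vertices dual to the inner triangles along the $n$ edges dual to the rim edges, while the triangle-vertices form an $n$-cycle whose edges are dual to the spokes; hence $\hat{\Gamma}_{2n}$ is again a wheel on $2n$ edges, i.e. wheels are self-dual. Next I would track the orientations and the edge ordering induced by the convention of Fig.~\ref{Fig: tetrahedron} (extended to the $2n$-gon as indicated before the statement). Just as in diagram~\eqref{Equ:6j self dual} for the tetrahedron, duality sends the rim/spoke edges to spoke/rim edges in reversed cyclic order and reverses their orientations relative to the standard template; using Proposition~\ref{Prop:Z2} (reversing all orientations fixes the graphic quon), together with the obvious rotation and the vertical reflection of the wheel and Proposition~\ref{Prop:positive}, one rewrites $\ket{T_{\hat{\Gamma}_{2n}}}$ as the graphic quon whose $\ket{\vec X}$-coefficient is exactly $\bigl|\binom{\cdots}{\cdots}\bigr|^2$ with the index pattern on the left-hand side of the asserted identity. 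I would argue this uniformly in $n\ge 2$, either by writing down the self-duality map of the wheel explicitly or by induction on $n$, peeling off one spoke together with its adjacent rim edge.

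With both identifications in place, Theorem~\ref{Cor:graph dual} (equivalently Equation~\eqref{Equ:graph}, which applies without the locality hypothesis because $\Gamma_{2n}$ lives on the sphere) gives
\be
\ket{T_{\hat{\Gamma}_{2n}}}=\vec{S}\,\ket{T_{\Gamma_{2n}}}.
\ee
Since $\vec{S}$ acts as the $2n$-th tensor power of the modular $S$ matrix, the coefficient of $\ket{\vec X}$ on the right is $\sum_{\vec Y\in Irr^{2n}}\bigl(\prod_{k=1}^{2n}S_{X_k}^{Y_k}\bigr)\bigl|\binom{Y_1\cdots Y_n}{Y_{n+1}\cdots Y_{2n}}\bigr|^2$; equating it with the coefficient of $\ket{\vec X}$ on the left produces the stated formula.

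The main obstacle is the middle step: pinning down, uniformly in $n$, the precise permutation of indices and the placement of the bars (duals) produced by planar duality together with the chosen edge-ordering convention, so that $\ket{T_{\hat{\Gamma}_{2n}}}$ really matches the left-hand side as written and not some other relabelling. Once that combinatorial identification is made, the rest is a verbatim transcription of the tetrahedron argument.
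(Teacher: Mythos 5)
Your proposal follows exactly the paper's argument: the paper's proof is precisely to form the dual graph of $\Gamma_{2n}$ in the same way as in the tetrahedron case \eqref{Equ:6j self dual} and then invoke Theorem~\ref{Cor:graph dual} to equate coefficients of $\ket{T_{\hat{\Gamma}_{2n}}}=\vec{S}\ket{T_{\Gamma_{2n}}}$. The combinatorial bookkeeping you flag as the main obstacle (wheel self-duality, orientations, and the index permutation with bars, handled via Propositions~\ref{Prop:positive} and~\ref{Prop:Z2}) is exactly what the paper leaves implicit by saying the dual is ``obtained similar to'' the tetrahedron case, so your account is if anything more explicit than the original.
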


\begin{proof}
The dual graph of $\Gamma_{2n}$ is obtained similar to Equation \eqref{Equ:6j self dual}.
The statement follows from Corollary \ref{Cor:graph dual}.
\end{proof}

  \bibliography{bibliography}

  \bibliographystyle{amsalpha}

\end{document}